\documentclass[11pt]{article}
\usepackage{amsmath,amsthm,amsfonts,amssymb,mathrsfs,epsfig,bm}
\usepackage{lipsum}
\usepackage[usenames]{color}
\usepackage[colorlinks=true]{hyperref}
\usepackage{graphicx}
\usepackage[scale=.8]{geometry}
\usepackage{ulem}
\usepackage{textcomp}
\usepackage{amsmath}
\usepackage{esint}
\usepackage{cleveref}
\usepackage[export]{adjustbox} % For images align
\usepackage{caption}
\usepackage{subcaption}
\usepackage{authblk}
\usepackage{mathtools}

\parskip        \medskipamount
\theoremstyle{plain}

\newtheorem{theorem}{Theorem}[section]
\newtheorem{proposition}{Proposition}[section]
\newtheorem{lemma}{Lemma}[section]
\newtheorem{assumption}{Assumption}

\theoremstyle{remark}
\newtheorem{remark}{Remark}
\newtheorem{corollary}{Corollary}
\newtheorem{definition}[theorem]{Definition}
\newtheorem*{example}{Example}

 \def\beqlb{\begin{eqnarray}}\def\eeqlb{\end{eqnarray}}
 \def\beqnn{\begin{eqnarray*}}\def\eeqnn{\end{eqnarray*}}

\def\B{\mathcal{B}}
\def\M{\mathcal{M}}
\def\N{\mathbb{N}}

\def\R{\mathbb{R}}

\def\P{\mathbb{P}}
\def\E{\mathbb{E}}

\renewcommand{\epsilon}{\varepsilon}

\renewcommand{\limsup}{\varlimsup}
\renewcommand{\liminf}{\varliminf}

\newcommand{\rt}{\operatorname{root}}

\definecolor{mygray}{gray}{0.9}
\definecolor{deeppink}{RGB}{255,20,147}
\definecolor{mygreen}{rgb}{0.05, 0.576, 0.03}
\definecolor{myred}{rgb}{0.768, 0.09, 0.09}

\long\def\symbolfootnote[#1]#2{\begingroup
\def\thefootnote{\fnsymbol{footnote}}\footnote[#1]{#2}\endgroup}
\def\TT{\msc{T}}
\def\tt{\mathbf{t}}

\newcommand{\Ind}[1]{\mathbf{1}_{\left\{#1\right\}}}

\renewcommand{\L}{\mcl{L}}
\newcommand{\Lb}{\overline{\L}}
\newcommand{\lab}{\bar{\lambda}}

\newcommand{\mcl}{\mathcal}

\newcommand{\msc}{\mathscr}
\newcommand{\Ll}{\left}
\newcommand{\Rr}{\right}
\newcommand{\bracket}[1]{\left\langle{#1}\right\rangle}
\newcommand{\norm}[1]{\left\Vert{#1}\right\Vert}

\usepackage{MnSymbol}

\usepackage{authblk}
\begin{document}

\title{\bf Perron--Frobenius theorem for a general tree-valued growth-fragmentation-isolation process}
\author{Chenlin Gu\thanks{Yau Mathematical Sciences Center, Tsinghua University}, \, Mingyuan Shen\thanks{Qiuzhen College, Tsinghua University}, \, Ronghang Zhang\thanks{Qiuzhen College, Tsinghua University}}

\maketitle
\begin{abstract}
A general tree-valued dynamics is considered in continuous time:  new vertices are added, and the percolation happens on the links, and the connected components can be frozen. The model is an infinite-type branching process. The main result establishes the Perron--Frobenius type theorem on this model, which extends the previous work \emph{[Ann. Appl. Probab. 33 (6B) 5233 - 5278]}. The proof does not rely on any property of the uniform random recursive tree.
\end{abstract}
 \vspace{8pt} \noindent {\textbf{Key words:} branching process, Perron--Frobenius theory,  recursive tree, Lyapunov function}
 
 \noindent\textit{MSC (2010): 60J27, 60J85, 60J80} 
 
 %\begin{figure}[h!]
 %    \centering
 %    \includegraphics[scale =0.5]{cover.png}
 %    \caption{An illustration of the growth-fragmentation-isolation process with 62 active vertices (in red) and 77 inactive vertices (in blue). }
 %    \label{fig:cover}
 %\end{figure}

\section{Introduction}
\subsection{Main result}
The branching process is a classical probabilistic model and has rich applications in the study of biology, population, computer science, and statistical physics. The object of this paper aims to extend the tree-valued process proposed by Bansaye, Yuan, and the first author in \cite{bansaye2021growth}. This process is called \textbf{the growth-fragmentation-isolation process} (GFI), which was defined to study the contact-tracing in the epidemic. Let us recall at first its original definition. The GFI process starts with a single vertex, and evolves as a process taking value in forest, a collection of trees, as follows:
\begin{enumerate}
	\item[(G)] Every vertex creates a birth of new vertex with rate $\beta$, and a bond is also added to connect the two. Every vertex is labeled by its order (e.g. $1,2,3,4 \cdots$) of birth.
    
	\item[(F)] Every bond is removed independently with a rate $\gamma$, then the fragmentation happens and the tree containing the bond is divided into two trees.
    
	\item[(I)] Every vertex is detected independently with the rate $\theta$, then all the vertices in the same tree are frozen/isolated and stop the evolution.
\end{enumerate}

\begin{figure}[h!]
    \centering
    \includegraphics[width=0.4\linewidth]{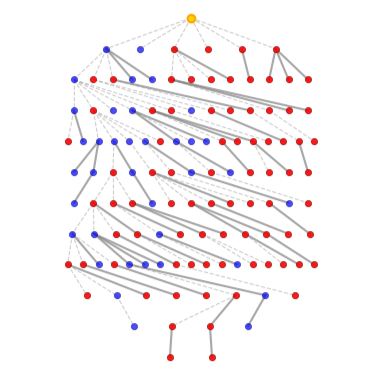}
    \includegraphics[width=0.4\linewidth]{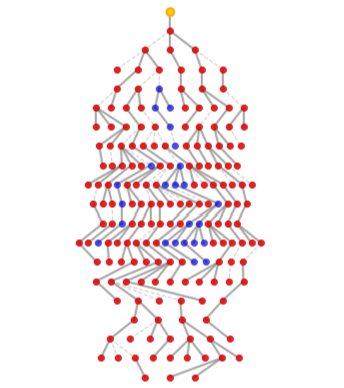}
    \caption{For the parameter setting $(\beta,\gamma,\theta)=(0.6,1,0.1)$, the simulated images illustrate two processes: one that creates a single new vertex each time (left), and one that creates a new chain of size 5 each time (right). The red vertices are active, while the blue vertices are isolated. All edges that have been reomved once are represented by dotted lines. }
\end{figure}

The growth is interpreted as the infection in the epidemic, and the bonds record the information of infection. This information is useful in the contact-tracing and quarantine. However, the bond as the information can be lost in the real world due to various causes. Thus the dynamic of the fragmentation is suggested.  A major feature of the GFI process is the branching both in the macroscopic scale and in the microscopic scale. Among the pandemic, several other models of similar spirit were proposed; see for example \cite{bertoin2022, bellin2023uniform, bellin2024uniformattachmentfreezingscaling, Barlow, Lambert}. The structured growth-fragmentation also appears in the multicellular life-cycle evolution; see \cite{pichugin2017fragmentation, pichugin2020evolution} for example.

The GFI process is Markovian and can be seen as an infinite-type branching process, where each type of tree has different law of production. More precisely, here the tree is equipped with labels, and the order increases along the path from the root to every vertex. This labeled tree is known as \textbf{the recursive tree} / increasing tree in the literature; see Section~\ref{subsec.recursiveTree} for its rigorous definition. Given the size of the recursive denoted by $n$, the uniform distribution over the $(n-1)!$ configurations is called \textbf{the uniform random recursive tree} (RRT), which has various nice properties; see \cite{meir1974cutting, baur2014cutting}. One key idea to study the long-time behavior in the previous work \cite{bansaye2021growth} is also related to this combinatorial object: conditioned on the size, every labeled tree in the GFI process follows the law of RRT. Therefore, the GFI process has a reduced version, where the type in branching is indexed by the size of trees. Afterwards, the law of production is more explicit; see \cite[Section~3.3]{bansaye2021growth} for the details.

Despite the nice combinatorial technique, we ask whether there exists a more robust approach to study the long-time behavior directly on the space of the recursive tree. This question was raised in \cite[Section~8.3]{bansaye2021growth}. Actually, the property of RRT is so nice that it is broken even when a very tiny perturbation is posed. For example, if $2$ vertices are added in every growth in GFI, then we will no longer find RRT in the GFI process. We hope to make GFI process more general, in order to study other related models.

The present work answers the question above. Let $\TT$ stand for the space of the recursive trees. Then we propose the following general growth dynamic.
\begin{enumerate}
	\item[(G')] With the rate $\beta$, every vertex independently has a growth. Once the growth occurs, with probability $p_{\tt}$, a tree $\tt \in \TT$ is added and its root is attached to the vertex with a bond.
\end{enumerate}
The law naturally satisfies $\sum_{\tt \in \TT} p_{\tt} = 1$. For the technical reason, we further assume the moment condition in this paper. Here $\vert \tt \vert$ refers to the number of vertices of $\tt$. It is possible that such moment condition can be further relaxed.
\begin{assumption}\label{assum}
$\sum_{\tt\in\TT}p_{\tt}|\tt|^2<+\infty$.
\end{assumption}

%%%% Old short version %%%%
\iffalse{
Compared to \cite{bansaye2021growth}, the difference is the growth term. If the vertex is attached one by one, then the Yule process is the natural object in the model, and the process can be reduced from $\TT$ to $\N$ using the properties of the recursive tree (RRT). Afterwards, the argument of Lyapunov function help entail a Perron--Frobenius theorem; see \cite[Sections~3.2, 3.3]{bansaye2021growth} for details. However, this is the only case to which the structure of RRT applies, and it breaks immediately otherwise. 

Since we believe the asymptotic behavior is universal, the object of this paper is to develop a more robust method to study this family of processes. This question was also mentioned in \cite[Section~8.3]{bansaye2021growth}.
}\fi

The main result of the paper establishes the Perron--Frobenius theorem for the general GFI process. This theorem is the basis for the long-time behavior of multi-type branching process; see the seminal work \cite{athreya1968some} by Athreya. However, in the infinite-type branching, the Perron--Frobenius theorem is not free. 

We introduce quickly the necessary notations for the main theorem. Let the vector $X_s := (X_s(\tt))_{\tt \in \TT}$ to represent the number of trees
\begin{align}\label{eq.Xt}
    X_s(\tt) := \text{ number of the tree } \tt \text{ at the moment } s.
\end{align}
We also denote by $\P_{\delta_{\tt}}$ the probability space of the process starting from a single tree $\tt$, and $\E_{\delta_{\tt}}$ its associated probability. The first moment semigroup is then defined as 
\begin{align}\label{eq.Mt}
    M_s(\tt, \tt') := \E_{\delta_{\tt}}[X_s(\tt')].
\end{align}
Let $\bracket{ \, , \,}$ stand for the inner product of two vectors. For a positive function $f$ defined on $\TT$, then $M_s f(\tt)$ is defined as 
\begin{align*}
    M_s f(\tt)  := \sum_{\tt' \in \TT} M_s(\tt, \tt') f(\tt') = \E_{\delta_{\tt}}[ \bracket{X_s, f} ].
\end{align*}
In the statement, $\lambda$ stands for the Perron root, and $\pi, h$ are the associated normalized left/right Perron functions.
\begin{theorem}\label{thm.main}
	Under Assumption~\ref{assum}, there exists a unique triplet $(\lambda, \pi, h)$ where $\pi, h : \TT \rightarrow  (0,\infty)$ and $\lambda \in \R$, such that for all $s\geq 0$,
	$$ \pi M_s = e^{\lambda s} \pi, \qquad M_s h =  e^{\lambda s} h,$$
	  and $\sum_{\tt \in \TT}\pi(\tt) = \sum_{\tt \in \TT}  \pi(\tt) h(\tt) =1.$  The vector $\pi$ satisfies that 
    \begin{align}\label{eq.3moment}
        \sum_{\tt \in \TT} \pi(\tt) \vert \tt\vert^2<\infty. 
    \end{align}
    Besides,  there exists $C, \omega > 0$ such that for all $\tt\in \TT$ and $s \geq 0$,
	\begin{align}\label{eq.spectral}
		\sup_{\substack{f:\TT \to \mathbb{R} \\
        \sup_{\tt\in \TT} \vert f(\tt) \vert / \vert \tt \vert^2  \leq 1}}\big\vert e^{-\lambda s} M_s f(\tt) -  h(\tt) \langle\pi,f\rangle \big\vert  &\leq C \vert \tt \vert^{2}  e^{-\omega s}.
	\end{align}
	
	%\comment{V : je mettrais  $\sum_{n\geq 1} \pi(n) n^p <\infty$ a la ligne, pas a cote de la convergence exponentielle, mais je ne veux pas rechanger si vous voulez comme cela ou que c'est appele ensuite avec ce label \eqref{eq:Spectral}}

	%\comment{V: ca me semble le bon endroit de mettre les controles sur les elements propres, remplacerait la remarque  \eqref{pinp} qui arrive pour la strong convergence}
	%\begin{equation}\label{hpi}
	%\sup_{n\geq 1} h(n)  n^{-\delta_0}<\infty, \qquad  
\end{theorem}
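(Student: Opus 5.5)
We would deduce the theorem from the general Harris-type criterion for non-conservative semigroups of Champagnat and Villemonais: a Lyapunov-function condition together with a Doeblin-type lower bound on a compact set. This yields existence and uniqueness of $(\lambda,\pi,h)$ and the exponential convergence in the weighted norm. The weight will be $V(\tt)=|\tt|^2$ and the auxiliary function $\psi\equiv 1$ (or $\psi(\tt)=|\tt|$). The plan is to check three conditions. First, a Lyapunov drift $\mathcal{L}V\le -a V + b\,\mathbf 1_K$ relative to the growth rate of $\psi$. Second, a local Doeblin minorization on a finite set $K=\{\tt:|\tt|\le N\}$. Third, a Harnack/comparison condition stating that $\sup_{\tt\in K} M_s\psi(\tt)/\inf_{\tt\in K}M_s\psi(\tt)$ is bounded uniformly in $s$.

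The first step is to write the generator on linear functionals $f$ of the tree population: $\mathcal{L}f(\tt)$ collects the contributions from growth, fragmentation and isolation. Growth occurs at rate $\beta|\tt|$ and replaces $\tt$ by $\tt\oplus\tt'$ with $\tt'\sim p$. Fragmentation occurs at rate $\gamma(|\tt|-1)$ and splits $\tt$ into two trees. Isolation occurs at rate $\theta|\tt|$ and kills $\tt$, since frozen trees are discarded. For $V(\tt)=|\tt|^2$ we need the size of each fragment. Removing a uniformly chosen edge of an \emph{arbitrary} tree of size $n$ gives fragments $k$ and $n-k$, so the fragmentation term is $\gamma\sum_{e}(k_e^2+(n-k_e)^2-n^2)=-2\gamma\sum_e k_e(n-k_e)$. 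The key point is that $\sum_e k_e(n-k_e)\ge n-1$ for any tree, with no RRT structure needed. This gives a fragmentation contribution of at most $-2\gamma(n-1)$.

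The growth term is $\beta n\,\E[(n+|\tt'|)^2-n^2]=2\beta n^2 m_1+\beta n m_2$, where $m_1,m_2$ are the moments of $|\tt'|$; these are finite by Assumption~\ref{assum}. The isolation term is $-\theta n\cdot n^2$. Taken together, the isolation loss $-\theta n^3$ dominates every other term at large $n$. Comparing against $\psi\equiv1$, whose growth rate is bounded below by $-\theta n$, we obtain $\mathcal{L}V\le(\lambda_0-\theta n/2)V$ outside a finite set. We therefore get $\mathcal{L}V\le -\kappa V + C\mathbf 1_K$ for any prescribed $\kappa$ by taking $N$ large. Since $\kappa$ can be made to exceed the growth rate of $M_s\mathbf 1$ on $K$, this is exactly the drift condition needed. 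The finiteness of $\sum\pi(\tt)|\tt|^2$ in \eqref{eq.3moment} then follows from $\pi(V)<\infty$, which the criterion delivers.

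The minorization and Harnack steps use irreducibility on the finite set $K$. From any $\tt\in K$, with positive probability within time $1$, all edges are cut, leaving singletons, and no isolation happens. A singleton then grows into any target tree $\tt_0$ of size at most $N$ via a chain of growths with positive $p$-probability, provided $\tt_0$ is reachable. To handle reachability in general, we would choose the reference measure as $\delta_{\bullet}$, the singleton, which is reachable from every tree by fragmentation. This gives $M_1(\tt,\cdot)\ge c\,\delta_\bullet$ for $\tt\in K$. The Harnack-type bound $M_s\mathbf 1(\tt)\le C M_s\mathbf 1(\bullet)$ then follows for $\tt\in K$ by decomposing at time $1$: $\tt$ produces singletons with controlled probability, and conversely the singleton's descendants are comparable. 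We expect the main obstacle to be the upper comparison, namely bounding $M_s\mathbf 1(\tt)$ by $M_s\mathbf 1(\bullet)$ uniformly in $s$. We would try the subadditivity bound $M_s\mathbf 1(\tt)\le |\tt|\,\sup_u M_s\mathbf 1(\text{tree rooted at }u)$, using the fact that each vertex evolves independently except for joint isolation, which only reduces mass. Combining this with the drift condition to control excursions outside $K$ completes the verification, and the cited abstract theorem yields \eqref{eq.spectral} with weight $|\tt|^2$.
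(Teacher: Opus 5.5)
\paragraph{Where the proposal breaks.} The gap is in the choice of the auxiliary function $\psi$. In the criterion you invoke (Champagnat--Villemonais, or the non-conservative Harris theorem of Bansaye--Cloez--Gabriel--Marguet that the paper uses), the lower bound on $\psi$ must hold on all of $\TT$. You need $M_\tau\psi\ge\beta\psi$ everywhere, with $\beta$ strictly larger than the factor $\alpha$ in $M_\tau V\le\alpha V+C\mathbf 1_K\psi$. Comparing $\kappa$ with ``the growth rate of $M_s\mathbf 1$ on $K$'' does not verify this.

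For $\psi\equiv 1$ one has $\L\mathbf 1(\tt)=(\gamma-\theta)|\tt|-\gamma$, which is unbounded below as soon as $\theta>\gamma$. This is not an artefact of the generator: in that regime $M_\tau\mathbf 1$ is not bounded away from $0$ on $\TT$. The mechanism is already visible in the paper's modified model without growth. Let $u(k)$ be the expected number of vertices of a $k$-vertex path that end in never-frozen singletons. Then $u(k)=\frac{2\gamma}{(\gamma+\theta)(k-1)}\sum_{j<k}u(j)$, so $u(n)\asymp n^{(\gamma-\theta)/(\gamma+\theta)}\to 0$. The alternative $\psi=|\cdot|$ is worse, since $\L|\cdot|(\tt)=\beta\mathbf m_1|\tt|-\theta|\tt|^2$.

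Localising $\psi$ to $K=\{|\tt|\le N\}$ does not obviously help either. Already $\psi=\mathbf 1_K$ only yields $\L\psi\ge-\big((\beta+\theta-\gamma)N+\gamma\big)\psi$. This competes with the drift rate $\approx\theta N$ of $V$ and wins only if $\gamma>\beta$. Since the theorem is asserted for all $\beta,\gamma,\theta$, the case $\theta>\gamma$ is left unproved. This is exactly the obstruction the paper points to when it says a suitable $\psi$ is hard to find on $\TT$.

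\paragraph{How the paper avoids it.} The paper constructs the eigenfunction first and then takes $\psi=h$. It truncates the modified process and applies finite-dimensional Perron--Frobenius on the irreducible finite set $A_n$. It then proves bounds uniform in $n$:
\begin{itemize}
\item $-\beta\le\lab_n\le\beta\mathbf m_1+\theta$;
\item $\langle\pi_n,|\cdot|^2\rangle\le C$;
\item $h_n(\tt)\le C|\tt|^\alpha$, via a maximum principle for $h_n/|\cdot|^\alpha$ combined with the edge-cut inequality $\sum_e(t_{e,1}^\alpha+t_{e,2}^\alpha)\le 2\sum_{i<|\tt|}i^\alpha$;
\item $h_n(\tt)\ge c/|\tt|$.
\end{itemize}
A subsequential limit gives $\L h=\lambda h$ and $\L^*\pi=\lambda\pi$ with $h>0$.

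Your drift bound $\L V\le-\theta|\tt|^3+2\beta\mathbf m_1|\tt|^2+\beta\mathbf m_2|\tt|$ is identical to the paper's. It becomes $\L V\le(\lambda-c)V+Ch$ because the left side is negative off a finite set, on which $h>0$. The identity $M_\tau h=e^{\lambda\tau}h$ then supplies the global lower bound with $\beta=e^{\lambda\tau}$. The Harnack condition is automatic for an eigenfunction, so the ``upper comparison'' you identify as the main obstacle never arises.

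For $\gamma\ge\theta$ your route can be completed and avoids the truncation. There $\L\mathbf 1\ge-\theta$. Your subadditivity idea also becomes exact by coupling the process started from $\tt$ with the one started from $|\tt|$ separated singletons: every alive cluster of the former is a union of alive clusters of the latter. This gives $M_s\mathbf 1(\tt)\le|\tt|\,M_s\mathbf 1(*)$, which is the Harnack condition with $\nu=\delta_*$.

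In either case, under only a second moment $\L V$ is of order $|\tt|^3\notin\B(V)$. Generator inequalities must therefore be transferred to $M_\tau$ by localisation, which the paper does with a stopping-time and Gr\"onwall argument.
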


Once the Perron--Frobenius theorem is established, we can also derive the limit theorem for the additive martingale.
 \begin{theorem}\label{Thm:a.e.ConvergenceIntro}
    Under Assumption~\ref{assum} and let $(\lambda, \pi, h)$ be the triplet in Theorem~\ref{thm.main}. If $\lambda>0$, then there exists a non-negative random variable $W$, such that for every function satisfying $\sup_{\tt\in \TT} \vert f(\tt) \vert / \vert h(\tt) \vert  <+\infty$, we have 
    \begin{equation}
    \label{eq:a.e.ConvergenceOff}
    \lim_{s\rightarrow\infty}e^{-\lambda s}\langle X_s,f\rangle=\langle \pi,f\rangle W\qquad\text{a.s. and in }L^1.
    \end{equation}
    In addition,  we can take $f \equiv 1$ on $\TT$.
    %For $\lambda\le0$, extinction occurs almost surely in finite time.
 \end{theorem}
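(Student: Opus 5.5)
We prove Theorem~\ref{Thm:a.e.ConvergenceIntro} by the classical route for supercritical multitype branching processes: first $L^2$ (hence $L^1$) convergence of the additive martingale, then almost sure convergence for general test functions via a spectral-gap argument along a lattice of times.

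\textbf{Step 1: the martingale.} Set $W_s := e^{-\lambda s}\langle X_s, h\rangle$. By the branching property and $M_s h = e^{\lambda s} h$ from Theorem~\ref{thm.main}, $(W_s)$ is a nonnegative martingale, so it converges almost surely to some $W \ge 0$.

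\textbf{Step 2: $L^2$ bound for $W_s$.} We bound the second moment by the many-to-two formula:
\begin{align*}
\E_{\delta_{\tt}}\big[\langle X_s,f\rangle^2\big] = M_s(f^2)(\tt) + \int_0^s M_{s-u}\big(Q[M_u f]\big)(\tt)\,du ,
\end{align*}
where $Q[g](\tt')$ collects the pair-production terms of one event at a tree $\tt'$. Fragmentation of $\tt'$ into $\tt_1,\tt_2$ contributes $g(\tt_1)g(\tt_2)$; growth only replaces $\tt'$ by a single tree.

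To control this, we first need a growth bound on $h$. Applying \eqref{eq.spectral} with $f = h\wedge|\cdot|^2$ suggests $h(\tt)\le C|\tt|^2$. The cleaner route is a Lyapunov bound $M_s(|\cdot|^2)(\tt)\le C|\tt|^2 e^{\lambda s}$, which should come from the paper's proof of Theorem~\ref{thm.main}; it gives $h(\tt)\le C|\tt|^2$ directly.

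Then $Q[M_u h]\le e^{2\lambda u} C|\tt'|^4$. So we need $M_{s-u}(|\cdot|^4)$ to grow slower than $e^{2\lambda (s-u)}$, up to subexponential factors.

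This is the main obstacle, because Assumption~\ref{assum} controls only second moments of the added trees. I would try to avoid fourth moments in one of two ways:
\begin{itemize}
\item Use the bound $h(\tt_1)h(\tt_2)\le C|\tt_1||\tt_2|\,\cdots$ together with $|\tt_1||\tt_2|\le|\tt'|^2$, once $h$ is shown to be $O(|\tt|)$. This should hold, since $h$ measures the asymptotic contribution and fragmentation is additive-ish.
\item Alternatively, truncate: work with $\min(h,K)$, prove the claim in $L^1$ via a $\log$-type criterion, and use uniform integrability of $W_s$.
\end{itemize}
With $h=O(|\tt|)$, the quantity needed is $M_{s-u}(|\cdot|^2)\le Ce^{\lambda(s-u)}|\tt|^2$. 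The integral is then bounded by $\int_0^s e^{2\lambda u}e^{\lambda(s-u)}\,du\cdot e^{-2\lambda s}$, which is bounded when $\lambda>0$. Hence $\sup_s \E[W_s^2]<\infty$, so $W_s\to W$ in $L^2$ and in $L^1$, and $\E W = h(\tt) >0$.

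\textbf{Step 3: general $f$ along a lattice.} Fix $\delta>0$ and $f$ with $|f|\le C h$. Write
\begin{align*}
e^{-\lambda (n+m)\delta}\langle X_{(n+m)\delta},f\rangle - \langle\pi,f\rangle W_{n\delta}
\end{align*}
and condition on $\mathcal F_{n\delta}$. The conditional mean is
\begin{align*}
e^{-\lambda n\delta}\sum_{\tt'} X_{n\delta}(\tt')\big(e^{-\lambda m\delta}M_{m\delta}f(\tt')-h(\tt')\langle\pi,f\rangle\big),
\end{align*}
which by \eqref{eq.spectral} is at most $Ce^{-\omega m\delta}\, e^{-\lambda n\delta}\langle X_{n\delta},|\cdot|^2\rangle$. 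This factor has bounded expectation by the Lyapunov bound.

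The conditional variance is bounded as in Step 2 by $Ce^{-\lambda n\delta}$. Taking $m=n$, Borel--Cantelli along $n\delta$ gives almost sure convergence on the lattice.

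\textbf{Step 4: interpolation in continuous time.} For nonnegative $f$, between lattice times we bound $\langle X_s,f\rangle$ above and below by the contributions of the lattice-time ancestors that survive without events during a time window $\delta$. This is the standard Nerman/Athreya sandwich, and we let $\delta\to0$. A general $f$ is handled by splitting $f=f^+-f^-$. $L^1$ convergence follows from domination by $C\,e^{-\lambda s}\langle X_s,h\rangle$, which converges in $L^1$.

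\textbf{Step 5: the case $f\equiv1$.} We need $1\le Ch$, i.e.\ $\inf_{\tt} h(\tt)>0$. This holds because every tree, before any event, contains the root vertex. So $h(\tt)\ge c\,\min_{\tt'} h(\tt')$, via the comparison with the single-vertex tree given by the monotonicity of the dynamics under coupling.
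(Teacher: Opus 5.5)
Your route ($L^2$ bound for $W_s$, Borel--Cantelli on a lattice via \eqref{eq.spectral}, then a continuous-time sandwich) differs from the paper's. The paper instead proves the $L\log L$-type bound $\E_{\delta_{\tt}}[Z_s\log^*Z_s]\le C|\tt|^2$, with $Z_s=\langle X_s,h\rangle$, by applying Dynkin's formula to $\Phi(\mathbf{F})=\langle\mathbf{F},h\rangle\log^*\langle\mathbf{F},h\rangle$, and then quotes \cite[Theorem~3.1]{bansaye2025stronglawlargenumbers}. Your route is viable in principle, but its key estimate is miscounted, and Step~5 is wrong.

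\textbf{Step 2.} Each single term $h(\tt_1)h(\tt_2)\le C|\tt'|^2$ is fine, but fragmentation acts at rate $\gamma$ on each of the $|\tt'|-1$ edges. Hence $Q[h](\tt')=2\gamma\sum_{v\neq\rt}h(\tt'_v)\,h(\tt'\ominus\tt'_v)$ carries an extra factor $|\tt'|$. With $h=O(|\cdot|)$ it is $O(|\tt'|^3)$; for a path, $\sum_{k=1}^{n-1}k(n-k)\asymp n^3$. Even with the bound $h\le C|\cdot|^\alpha$, $\alpha>\frac{\gamma-\theta}{\gamma+\theta}$, that the paper proves in Proposition~\ref{eigenelements}, it is only $O(|\tt'|^{1+2\alpha})$, and this is not $O(|\tt'|^2)$ once $\gamma\ge 3\theta$. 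So $M_{s-u}(|\cdot|^2)$ is not what you need. You need $\int_0^\infty e^{-2\lambda u}M_u(|\cdot|^3)(\tt)\,du\le C|\tt|^2$, and Assumption~\ref{assum} gives no handle on $\L|\cdot|^3$, which would involve $\mathbf{m}_3$.

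The missing idea is to exploit isolation: $\L|\cdot|^2\le-\theta|\cdot|^3+2\beta\mathbf{m}_1|\cdot|^2+\beta\mathbf{m}_2|\cdot|$. The stopped Dynkin argument from the proof of Lemma~\ref{lem.lyapunovcondition} then gives $\theta\int_0^1M_r(|\cdot|^3)(\tt)\,dr\le |\tt|^2+C\int_0^1M_r(|\cdot|^2)(\tt)\,dr\le C|\tt|^2$. Combining this with the semigroup property, $M_k(|\cdot|^2)\le Ce^{\lambda k}|\cdot|^2$ (from \eqref{eq.spectral}) and $\lambda>0$ gives $\sup_s\E_{\delta_{\tt}}[W_s^2]\le C|\tt|^2$. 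Until this is supplied, Step~2 and the conditional-variance bound in Step~3, which relies on the same estimate, are unsupported.

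In Step~4, ancestors that survive the window without events give only a lower bound. The upper bound for $0\le f\le Ch$ should come from the lower bound applied to $Ch-f$, together with $W_s\to W$ in continuous time and $\langle\pi,h\rangle=1$.

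\textbf{Step 5.} There is no monotone coupling between the processes started from $\tt$ and from $*$. The cluster containing the root is isolated at rate $\theta|\tt|$, which freezes the root vertex too, so a bigger tree is not better. The inequality $h(\tt)\ge c\min_{\tt'}h(\tt')$ is also circular. Your argument never uses $\lambda>0$, yet for $\gamma\le\theta$ the paper only obtains $h(\tt)\ge c/|\tt|$.

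The correct route: $\lambda>0$ gives $\lab=\lambda+\theta>0$, hence $\gamma>\theta$ by Proposition~\ref{eigenelements}. In that regime, Lemma~\ref{lem:IsolatedVertexAppear} shows that a singleton splits off before time $1$ with probability bounded below uniformly in the starting tree. This yields $\inf_{\tt}h(\tt)>0$, as recorded in \eqref{eq:EstimationOfH}, and so covers $f\equiv1$.
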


\subsection{Related literature}
The spectral analysis of multitype branching processes is classically rooted in the Perron–Frobenius theory for nonnegative matrices and operators. In the finite-type setting, irreducibility of the mean offspring matrix guarantees the existence of a dominant positive eigenvalue together with strictly positive left and right eigenvectors, providing the natural normalization for additive martingales; standard references include  \cite{Harris63} and \cite{AN2004}. Under suitable moment assumptions, notably the Kesten--Stigum $X \log X$ condition, the associated additive martingale converges in $L^1$ to a non-degenerate limit; see \cite{kesten1966limit}.

For branching processes with countably infinite or more general type spaces, the situation is substantially more delicate. In contrast to the finite-dimensional setting, Perron--Frobenius theorem no longer applies automatically, since the spectral radius of the first-moment operator need not correspond to an isolated eigenvalue with positive eigenfunctions. For countable type spaces, a foundational framework was developed by Vere--Jones in \cite{VJ62} and the subsequent series of papers on irreducible countable nonnegative matrices, where recurrence classifications such as transience, null recurrence, and positive recurrence play a central role. Related operator-theoretic perspectives may be found in \cite{Seneta06}, while general branching-process treatments appear in \cite{jagers75}. See also \cite{andre2025sharpllogl, bansaye2025stronglawlargenumbers} for very recent study about the Kesten--Stigum theorem on countable-type branching, assuming the asymptotic behavior of semigroup.

\subsection{Strategy of proof}
The proof of this paper combines two ingredients. The first one is the Lyapunov function, especially the approach developed in \cite{bansaye2019non}. 
\begin{align*}
    \L V \leq a V + \zeta\psi, \qquad b \psi \leq \L \psi  \leq \xi \psi.
\end{align*}
Here $\L$ is the first moment generator; see \eqref{eq.generatorDecom} and \eqref{eq.generatorGFI} for its definition. Such argument has already been applied in the previous work \cite{bansaye2021growth}, while the main challenge in the general GFI process is the abstract fragmentation dynamic. Without the combinatorial technique from RRT, we cannot reduce the branching process. Therefore, the fragmentation dynamic on $\TT$ highly depends on the geometry of the tree, and it is not easy to find $\psi$ satisfying the conditions.

The solution relies on the second ingredient, which is the finite-dimensional approximation. Roughly, we define a process truncated below the cluster size $n$, so the finite-dimensional Perron--Frobenius theorem applies and we obtain a triplet $(\lambda_n, \pi_n, h_n)$
\begin{align*}
    \L_n^* \pi_n = \lambda_n  \pi_n, \qquad \L_n h_n = \lambda_n h_n.
\end{align*}
A detailed and model-dependent analysis then establishes various properties about $(\lambda_n, \pi_n, h_n)$, which holds uniformly for all $n \in \N$. This allows us to derive a limit candidate triplet $(\tilde{\lambda}, \tilde{\pi}, \tilde{h})$ up to subsequence. 

The information of the triplet $(\tilde{\lambda}, \tilde{\pi}, \tilde{h})$ is quite limited \textit{a priori}, but it helps in the construction of Lyapunov function. Then, we can finally confirm that it is indeed the Perron triplet associated to the infinite-type branching.

\section{Preliminaries and notation}
We define rigorously the notations utilized throughout the paper.
\subsection{Recursive tree}\label{subsec.recursiveTree}
Given a set ${V=\{a_1, \cdots, a_n\} \subset \R}$ with increasing order $a_1 < a_2 < \cdots < a_n$, a \textit{recursive tree} $\tt$ on $V$ is a rooted tree labeled by  $V$ such that for any $a_i, 2 \leq i \leq n$, the path  from $a_1$ to $a_i$ is increasing. Thus, the descendants of each vertex have a larger label. The minimal element $a_1$ is called the \textit{root} of $\tt$. The collection of all the recursive trees on $V$ is denoted by $\TT_V$ and it is clear $\vert \TT_V \vert = (\vert V \vert-1)!$.

We also define the equivalence relation $\sim$ between the recursive trees on different ordering sets.  Denoting by $\tt_1$ a recursive tree on $V_1$ and $\tt_2$ a recursive tree on $V_2$, then $\tt_1 \sim \tt_2$ if and only if there exists an order-preserving function $\psi : V_1 \to V_2$, such that $\psi$ is also a bijection  between the graphs $\tt_1$ and $\tt_2$. We denote by $\TT_n$ the set of recursive trees of size $n$ up to the equivalence relation $\sim$, and use the recursive trees defined on $\{1, \cdots, n\}$ as a representative of the equivalent class. Finally, we define the space of finite recursive trees
\begin{align}\label{eq:defRRTWhole}
	\TT := \bigcup_{n=1}^{\infty} \TT_n.
\end{align}  

Finally, given $\tt \in \TT$, we denote by $V(\tt)$ and $E(\tt)$ respectively the set of vertices and the set of bonds/edges, and usually use $\tt$ to refer to $V(\tt)$ as a shorthand notation. Let $\rt(\tt)$ stand for the root of $\tt$, and $\vert \tt \vert$ for the number of vertices in $\tt$. Especially, we denote by $*$ the tree formed by a single vertex.

\subsection{Generator}
We use the language of generator to reformulate rigorously the dynamic of the general GFI process in this paper. 

Given two recursive trees $\tt, \tt' \in \TT$ and $v \in \tt$, we denote by $\tt\oplus_v\tt'$ the tree obtained by ``growth of $\tt'$ at $v$": shift the labels of vertices in $\tt'$ so that they are larger than those in $\tt$, then attach the root of $\tt'$ at $v$ via a bond. See Figure~\ref{fig.growth} for an illustration.
\begin{figure}[h!]
    \centering
    \includegraphics[width=0.7\linewidth]{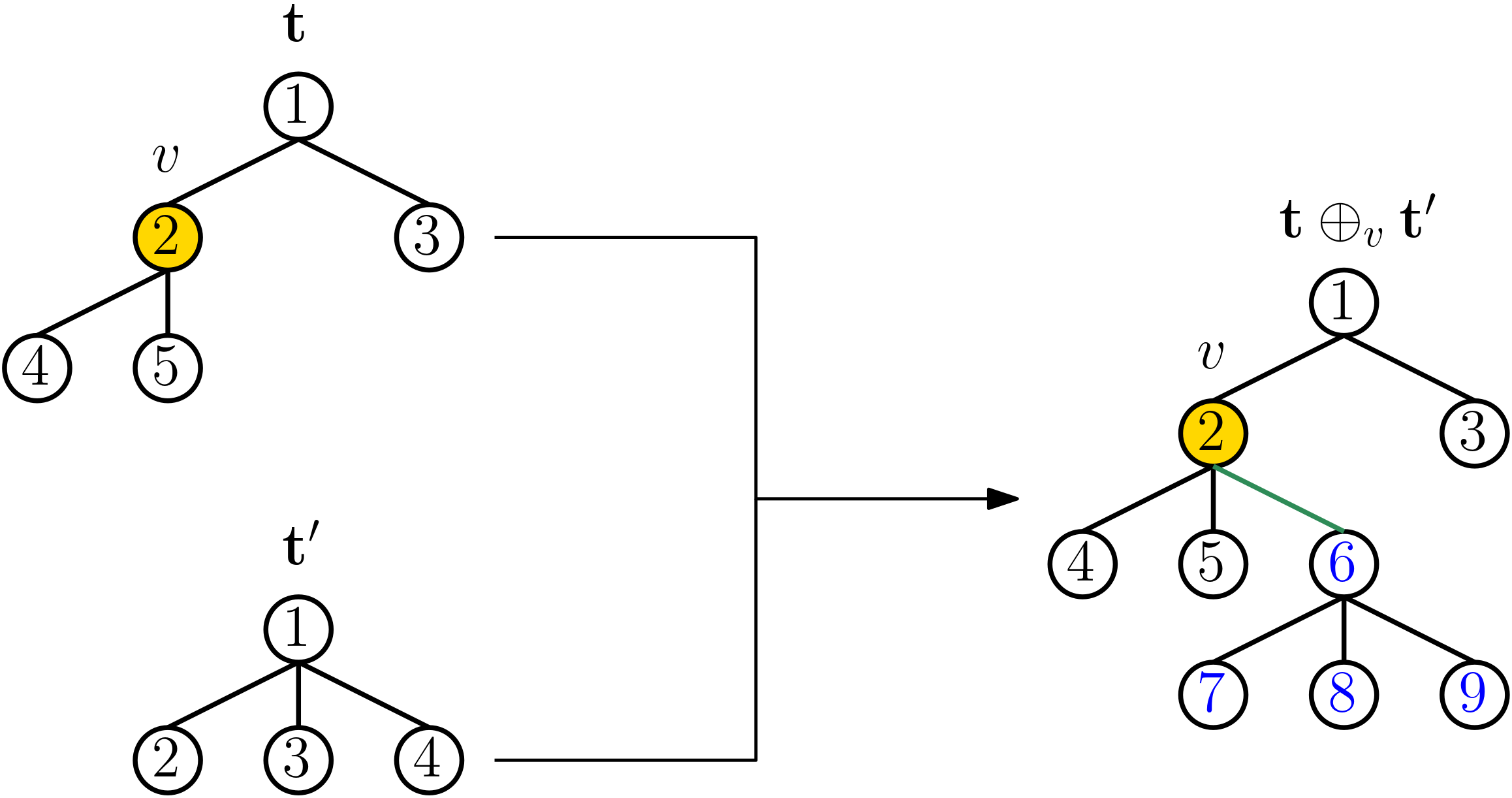}
    \caption{An illustration of the growth dynamic.}
    \label{fig.growth}
\end{figure}

Let $\tt_v$ be the subtree of the recursive tree $\tt$ with root $v$, and let $\tt\ominus\tt_v$ be the tree obtained after removing $\tt_v$ from $\tt$. See Figure~\ref{fig.fragmentation} for an illustration.
\begin{figure}[h!]
    \centering
    \includegraphics[width=0.7\linewidth]{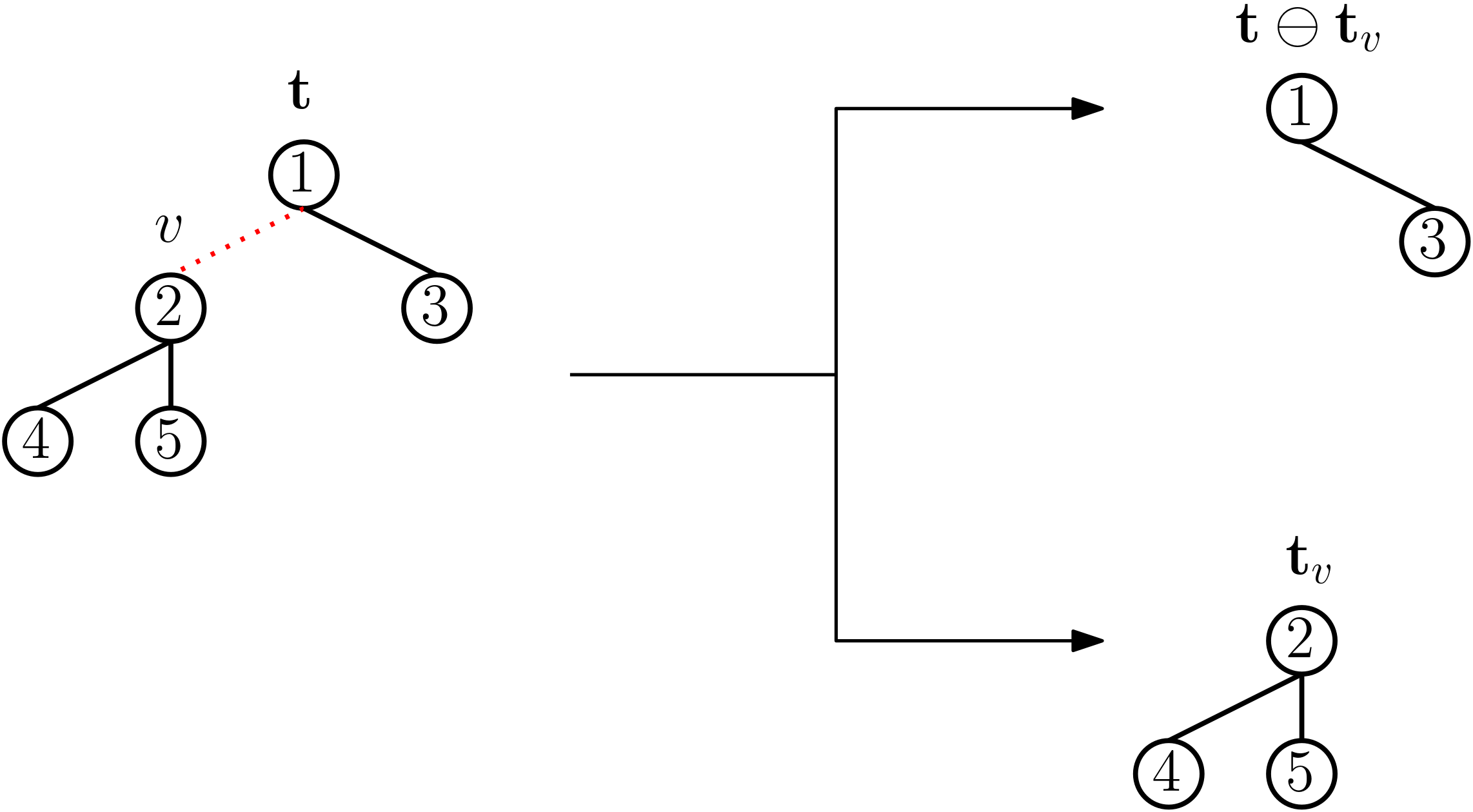}
    \caption{An illustration of the fragmentation dynamic.}
    \label{fig.fragmentation}
\end{figure}
%\begin{align}\label{eq.def.generator}
%    \frac{d}{ds}M_sf(\tt)=M_s(\L f)(\tt).
%\end{align}
The first order generator $\L$ consists of the dynamics of growth, fragmentation, and isolation
\begin{align}\label{eq.generatorDecom}
	\L  = \L_{G}  + \L_{F} + \L_{I}, 
\end{align}
which is well-defined for all bounded function $f$
\begin{equation}\label{eq.generatorGFI}
	\Ll\{\begin{array}{ll}
		\L_{G} f(\tt) &:= \beta  \sum_{v \in \tt}\sum_{\tt' \in \TT}p_{\tt'}(f(\tt \oplus_v \tt') -f(\tt)),\\
		\L_{F} f(\tt) &:= \gamma \sum_{v \in \tt \setminus\{\rt\}} \Ll(f(\tt_v) + f(\tt \ominus \tt_v) - f(\tt)\Rr),\\
		\L_{I} f(\tt) &:= - \theta \vert \tt \vert f(\tt).
	\end{array}\Rr.
\end{equation}
Recall that $p_{\tt'}$ is the probability that $\tt'\in\TT$ is attached in the growth event. In the sum, we just write $\rt$ as a shorthand notation for $\rt(\tt)$, which is clear according to the context.

We write $\L^*$ as its dual operator so that $\bracket{g, \L f} = \bracket{\L^*g, f}$. For convenience, we also identify $\L$ as a matrix $\L(\tt, \tt')_{\tt, \tt' \in \TT}$, so that 
\begin{align*}
    \L f(\tt) = \sum_{\tt' \in \TT} \L(\tt, \tt') f(\tt').
\end{align*}

\subsection{Functions}

  %We denote the corresponding probability distribution by $p$, and for any function $f$ denote the expectation with respect to $p$ by $\E_pf$. 
 
  We denote by the $\mathcal{M}_{F}(\TT)$ the collection of point measures
  $$\mathcal{M}_{F}(\TT):=\Ll\{\text{finite non-negative integer-valued measures on } \TT\Rr\}.$$ 
  Then a forest $\mathbf{F}$ consisting of $\{\tt_i\}_{i \in \N}$ with multiplicity $\{m_i\}_{i \in \N}$ can be identified as $\sum_{i \in \N} m_i\delta_{\tt_i}$, thus an element in $\mathcal{M}_{F}(\TT)$. The GFI process $X_t$ is a Markov jump process on $\mathcal{M}_F(\TT)$. 
  
  %For a measure $\mu$ and a function $f$ on $\TT$, define 
  %$$\langle\mu,f\rangle:=\sum_{\tt\in\TT}\mu(\tt)f(\tt).$$
  
  Let $|\cdot|^\alpha$ represent the function
  $$|\cdot|^\alpha(\tt):=|\tt|^\alpha.$$
  For every $\alpha>0$, we denote the $\alpha$-moment of $\{p_{\tt}\}_{\tt\in\TT}$ by
  $$\mathbf{m}_{\alpha}:=\sum_{\tt\in\TT}p_{\tt}|\tt|^{\alpha}$$
  
  %Let $\Ind{\tt}$ represent the indicator function on $\tt$:
  %$$\Ind{\tt}(\tt')=\delta_{\tt,\tt'}.$$
  
  Given a positive function $g$ on $\TT$, we define the norm 
  \begin{align}\label{eq.def.normsup}
        \norm{f}_{\B(g)} := \sup_{\tt\in\TT} \Ll\vert \frac{f(\tt)}{g(\tt)} \Rr\vert,
  \end{align}
  and we also let $\B(g)$ refer to the functions of finite norm, which is a Banach space.

\section{Modified GFI process and truncated process}

\subsection{Definitions}

We introduce a \textbf{modified GFI process}, which has better properties compared to the original one. The modified GFI process has the same growth and fragmentation dynamics as the original. In the isolation mechanism, each edge is detected independently at rate $\theta$, and the corresponding cluster is then isolated. The associated  first moment generator $\Lb$ of the modified process is:
\begin{align}\label{eq.Lb}
	\Lb f(\tt) = \L f(\tt) + \theta f(\tt).
\end{align}
We then define the truncated modified GFI process, and we just call it \textbf{the truncated process} in short. The generator is defined by 
\begin{align}\label{eq.truncated}
	\Lb_n f(\tt) :=   \beta\Ind{\vert \tt \vert \leq n}\sum_{v\in\tt}\sum_{|\tt'|\le n}p_{\tt'}(f(\tt\oplus_v\tt')-f(\tt))  + \L_{F}f(\tt) + (\L_{I} + \theta)f(\tt).
\end{align} 
That is, when the size of some cluster is greater than $n$, the growth is stopped and only fragmentation and isolation are allowed. In addition, trees larger than $n$ are not allowed to be attached in the growth step. Let $X_{s,n}$ be the corresponding truncated Markov process starting from a single point $*$, then $X_{s,n}$ only concludes clusters of size at most $2n$.  
If the probability of a point to attach in the growth step $p_*$ is greater than $0$, we know that for any $|\tt|\le 2n$ and $s>0$, the probability that $X_{s,n}$ has the cluster $\tt$ is greater than $0$, thus we can view $\Lb_n$ as an irreducible Metzler matrix on $\TT_{2n}$. We denote the $(\tt,\tt')$ element of the matrix by $\Lb_n(\tt,\tt')$, which represents the jump rate from $\tt$ to $\tt'$.  However, the $|\TT_{2n}|\times|\TT_{2n}|$ matrix $\Lb_n$ may not be irreducible in the general case where $p_*>0$ may not hold. In the general case, we define $A_n$ the set of clusters that may appear in $X_{s,n}:$
$$A_n:=\Ll\{\tt\in\TT\big|\ \exists \{\tt_i\}_{i=0}^{m}\text{ s.t.  }\tt_0=*,\tt_m=\tt,\Lb_n(\tt_i,\tt_{i+1})>0\Rr\}\subset\TT_{2n}.$$
Then $\Lb_n$ can be viewed as an irreducible matrix on the finite set $A_n$, since for any $\tt,\tt'\in A_n$, starting from $\tt\in A_n$ we can first reach $*$ through fragmentation and then reach $\tt'$.
Thus, the classical finite-dimensional Perron--Frobenius theorem applies, and we have:
\begin{align}\label{eq.PFn}
	\Lb_n^* \pi_n = \lab_n  \pi_n, \qquad \Lb_n h_n = \lab_n h_n
\end{align}
for some $\lab_n\in\mathbb{R}$ and positive vectors $\pi_n,h_n\in\R_{+} ^{A_n}$. We also view $\pi_n$ and $h_n$ as non-negative vectors on $\TT$ by setting $$\pi_n(\tt)=h_n(\tt)=0,\forall \tt\notin A_n.$$ 
We define the truncated first moment semigroup to be the $|A_n|\times|A_n|$ matrix with $(\tt,\tt')$ element
$$M_{s,n}(\tt,\tt'):=\exp(s\Lb_n)(\tt,\tt')=\E_{\delta_\tt}\Ll[\langle X_{s,n},\Ind{\tt'}\rangle\Rr].$$
Then we have
$$\pi_nM_{s,n}=e^{\lab_n s}\pi_n,\qquad M_{s,n}h_n=e^{\lab_n s}h_n.$$
For appropriate normalization, let $\langle\pi_n,1\rangle =1$ and $h_n(*)=1$ for any n. After studying $h_n$ and $\pi_n$ in section 3.2 and section 3.3,  we give the existence of eigenelements $\lab,h,\pi$ of $\Lb$ as limit of $\lab_n,h_n,\pi_n$ in section 3.4. The following proposition is the goal of the entire section:

\begin{proposition}[Eigenelements of $\Lb$]\label{eigenelements}

	There exist $\lab\in\mathbb{R}$ and positive vectors $h,\pi\in\R_{+}^{\TT}$ such that
	\begin{align}\label{eq.eigenUnique}
		\Lb^* \pi = \lab  \pi, \qquad \Lb h = \lab h,\qquad \langle\pi,1\rangle=\langle\pi,h\rangle=1.
	\end{align}
	Moreover, for any $\alpha>\max\{0,\frac{\gamma-\theta}{\gamma+\theta}\}$, there exist $c,C>0$, such that for every $\tt\in\TT$ we have
    \begin{equation}
    \label{eq:EstimationOfH}
    h(\tt)\geq
    \begin{cases}
    c,&\gamma>\theta\\
    \frac{c}{|\tt|},&\gamma\le\theta
    \end{cases}, 
    \qquad h(\tt)\le C|\tt|^{\alpha},
    \end{equation}
    and
    \begin{equation}
    \label{eq:MomentOfPi}
    \langle\pi,| \, \cdot \,|^2\rangle<+\infty.
    \end{equation}
    In addition, $\lab > 0$ if $\gamma > \theta$, $\lab = 0$ if $\gamma = \theta$, and $\lab < 0$ if $\gamma < \theta$.
\end{proposition}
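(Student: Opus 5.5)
The plan is to build $(\lab,\pi,h)$ as a subsequential limit of the truncated triplets $(\lab_n,\pi_n,h_n)$ from \eqref{eq.PFn}. This requires estimates that are uniform in $n$, and these come from testing the eigen-relations against the power functions $|\cdot|^\alpha$.

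\emph{Sign and size of $\lab_n$.} The modified generator has a simple identity. Since $(\L_I+\theta)f(\tt)=-\theta(|\tt|-1)f(\tt)$, growth does not act on constants, and each fragmentation adds one cluster, we get $\Lb_n 1(\tt)=\Lb 1(\tt)=(\gamma-\theta)(|\tt|-1)$. Pairing with $\pi_n$ gives
\begin{align*}
\lab_n=\lab_n\langle \pi_n,1\rangle=(\gamma-\theta)\langle \pi_n,|\cdot|-1\rangle .
\end{align*}
So $\lab_n$ has the sign of $\gamma-\theta$, as soon as $\pi_n$ charges some non-singleton, which holds by irreducibility for $n$ large. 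The same identity will pass to the limit and give the sign statement for $\lab$.

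\emph{Uniform moments of $\pi_n$.} Fragmentation preserves total size, so $\L_F|\cdot|=0$. For the second power,
\begin{align*}
\L_F|\cdot|^2(\tt)=-2\gamma\sum_{v\neq \rt}|\tt_v|(|\tt|-|\tt_v|)\le -2\gamma(|\tt|-1)^2 .
\end{align*}
Growth contributes at most $\beta|\tt|(2\mathbf m_1|\tt|+\mathbf m_2)$, which is finite by Assumption~\ref{assum}. Isolation contributes $-\theta(|\tt|-1)|\tt|^2$. Hence $\Lb_n|\cdot|^2\le C-c|\cdot|^3$ uniformly in $n$. Pairing with $\pi_n$ gives $c\langle\pi_n,|\cdot|^3\rangle\le C+|\lab_n|\langle\pi_n,|\cdot|^2\rangle$. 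Combined with $|\lab_n|\le|\gamma-\theta|\langle\pi_n,|\cdot|\rangle$ and interpolation between the first and third moments, this closes to $\sup_n\big(|\lab_n|+\langle\pi_n,|\cdot|^3\rangle\big)<\infty$.

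\emph{Passing to the limit for $\pi$.} Extract a subsequence with $\lab_n\to\lab$ and $\pi_n\to\pi$ pointwise on the countable set $\TT$. The third-moment bound gives tightness, so $\langle\pi,1\rangle=1$ and $\langle\pi,|\cdot|^2\rangle<\infty$; this is \eqref{eq:MomentOfPi}. To get $\Lb^*\pi=\lab\pi$, fix $\tt'$. The column entries $\Lb(\tt,\tt')$ are nonzero only for $|\tt|<|\tt'|$ (growth) or for $\tt$ fragmenting into $\tt'$, and the latter have rate at most $\gamma|\tt|$. The moment bound therefore gives uniform integrability, and the truncation only affects trees of size $>n$. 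Positivity of $\pi$ follows from the limiting equation together with irreducibility of $\Lb$ on $\TT$.

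\emph{The right eigenvector and the bounds \eqref{eq:EstimationOfH}.} With $h_n(*)=1$, the lower bound would come from $h_n(\tt)=e^{-\lab_n s}M_{s,n}h_n(\tt)$. Starting from $\tt$, with probability bounded below in a fixed time $s$, all bonds are cut before any isolation or growth. This yields $|\tt|$ singletons, weighted by the isolation factors. When $\gamma>\theta$ this should give $h_n(\tt)\ge c$; when $\gamma\le\theta$, a cruder path keeping one singleton gives $c/|\tt|$.

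For the upper bound I would look for a supersolution: show $\Lb_n|\cdot|^\alpha\le(\lab_n-\epsilon)|\cdot|^\alpha+C\Ind{|\cdot|\le K}$ uniformly in $n$. A comparison/maximum principle on the finite set $A_n$, using $\langle\pi_n,h_n\rangle$ to fix constants, should then give $h_n\le C|\cdot|^\alpha$. The limit $h$ then solves $\Lb h=\lab h$, since rows of $\Lb$ involve trees of size at most $|\tt|+|\tt'|$ weighted by $p_{\tt'}$, and $\mathbf m_\alpha<\infty$. Because $\alpha<2$, the quantity $\langle\pi,h\rangle$ is finite and positive, and we renormalize $h$ so that $\langle\pi,h\rangle=1$.

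\emph{Main obstacle.} The hardest step is this upper bound. One must estimate $\L_F|\cdot|^\alpha(\tt)=\gamma\sum_v\big(|\tt_v|^\alpha+(|\tt|-|\tt_v|)^\alpha-|\tt|^\alpha\big)$ uniformly over all tree geometries, from stars to chains, against the isolation drift $-\theta(|\tt|-1)|\tt|^\alpha$ and against $\lab_n$. The threshold $\alpha>\frac{\gamma-\theta}{\gamma+\theta}$ should come from the worst geometry in this comparison. I would first treat the star and the chain explicitly, then use concavity of $x\mapsto x^\alpha$ to reduce the general case to a bound of the form $\sum_v(\cdots)\le(|\tt|-1)(1-c_\alpha)+O(1)$.
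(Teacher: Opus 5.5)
Your lower bound $h_n\ge c$ for $\gamma>\theta$ does not work, and this is the main gap. The event you invoke is that every bond of $\tt$ is cut before any isolation or growth, within a fixed time $s$. Its probability is not bounded below in $|\tt|$. In the modified process each bond carries its own detection clock of rate $\theta$ and must be cut before it rings. So even ignoring growth the probability is $(\gamma/(\gamma+\theta))^{|\tt|-1}$, and cutting all $|\tt|-1$ bonds by time $s$ alone has probability $(1-e^{-\gamma s})^{|\tt|-1}$. The resulting bound on $M_{s,n}(\tt,*)$ therefore decays exponentially in $|\tt|$, whatever weights you attach to the singletons.

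The missing idea is that fragmentation is itself a branching mechanism. After a cut the two pieces evolve independently, and each non-growth event of a cluster is a cut rather than a detection with probability $\gamma/(\gamma+\theta)$. The chance that some singleton appears is thus governed by the survival probability $1-\theta/\gamma$ of a binary splitting process, which is positive exactly when $\gamma>\theta$. The paper makes this rigorous in Lemma~\ref{lem:IsolatedVertexAppear} by a minimum principle over $A_n$ for $q_s(\tt)=\P_{\delta_{\tt}}(\tau^{(n)}\le s)$. At the minimizing tree the growth terms are nonnegative, and a leaf cut gives the source term $\gamma$. Every other cut contributes $\gamma(1-(1-q)^2)-(\gamma+\theta)q\ge 0$ as long as $q\le 1-\theta/\gamma$, and the barrier $y(s)$ handles the finite time window. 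Only then does $h_n(\tt)\ge e^{-\sup_m\lab_m-\beta}\,\P_{\delta_{\tt}}(\tau^{(n)}\le 1)$ yield a uniform constant.

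The upper bound is also left open at its nontrivial point, as you acknowledge. What is needed is Lemma~\ref{Lem:TreeEstimate}, $\sum_{e\in E(\tt)}(t_{e,1}^\alpha+t_{e,2}^\alpha)\le 2\sum_{i=1}^{|\tt|-1}i^\alpha$, with the chain extremal. The paper proves it by deleting a leaf and showing that the root-side component sizes, ordered increasingly, satisfy $x_j\ge j$; concavity of $x^\alpha$ alone does not give this. With it, at $\tt_0=\arg\max_{A_n}h_n/|\cdot|^\alpha$ one has $\lab_n|\tt_0|^\alpha\le\Lb_n|\cdot|^\alpha(\tt_0)\le(\frac{2\gamma}{\alpha+1}-\gamma-\theta)|\tt_0|^{\alpha+1}+C'|\tt_0|$, so $|\tt_0|$ is bounded uniformly in $n$. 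You still need $\sup_n h_n(\tt)<\infty$ for each fixed small $\tt$. The paper gets this in Lemma~\ref{lem:PointUpperOfh_n} from $e^{\lab_n}=M_{1,n}h_n(*)\ge M_{1,n}(*,\tt)h_n(\tt)$ and $\inf_n M_{1,n}(*,\tt)>0$. By contrast, $\bracket{\pi_n,h_n}$ is not a priori controlled under $h_n(*)=1$, and switching normalization just moves the same comparison to $h_n(*)$.

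Finally, your moment closure fails as written. H\"older gives only $\bracket{\pi_n,|\cdot|}\bracket{\pi_n,|\cdot|^2}\le\bracket{\pi_n,|\cdot|^3}$. In $\Lb_n|\cdot|^2\le C-c|\cdot|^3$ one must take $c<\theta$, since for a star fragmentation is only quadratic. So bounding $-\lab_n$ by $|\gamma-\theta|\bracket{\pi_n,|\cdot|}$ closes only when $|\gamma-\theta|<\theta$, and fails for instance when $\gamma\ge 2\theta$. Keep the sign instead: $\lab_n\ge 0$ when $\gamma\ge\theta$, and for $\gamma<\theta$ choose $c\in(\theta-\gamma,\theta)$. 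Alternatively, use the paper's $\Lb_n\Ind{*}\ge-\beta\Ind{*}$, i.e.\ $\lab_n\ge-\beta$, and test against $|\cdot|$ instead of $|\cdot|^2$. Your identity $\lab_n=(\gamma-\theta)\bracket{\pi_n,|\cdot|-1}$, the limit equation for $\pi$, and the sign of $\lab$ are fine.
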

\begin{remark}
Note that the normalization for $h_n$ is $h_n(*)=1$, but here the normalization for $h$ is $\langle\pi,h\rangle=1$. Thus, $h$ is derived taking the limit of $h_n$ after rescaling.
\end{remark}

\subsection{Estimates of truncated left eigenvector}

    \begin{lemma}
        \begin{enumerate}
        \item $|\lab_n|$ is uniformly bounded in $n$:
            \begin{align}\label{eigenvaluebound}
           \forall n\in\N, \qquad -\beta\leq \lab_n\leq \beta\mathbf{m}_1+\theta. 
            \end{align}
        \item  There exists a constant $C>0$ independent of $n$ such that 
            \begin{align}\label{uniformBoundOfPi_n}
            \forall n\in\N, \qquad  \langle\pi_n, |\cdot|^2\rangle\leq C.
            \end{align}
        \end{enumerate}
    \end{lemma}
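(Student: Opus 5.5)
The plan is to test the left eigen-relation \eqref{eq.PFn} against the polynomial functions $1$, $|\cdot|$ and $|\cdot|^2$. Since $A_n$ is finite, every sum below is finite, and $\langle \pi_n, \Lb_n f\rangle = \lab_n \langle \pi_n, f\rangle$ holds for every $f$ on $\TT$. Here only the values of $f$ on $A_n$ matter, because $\pi_n$ vanishes off $A_n$ and $\Lb_n$ maps $A_n$ into $A_n$. Positivity of $\pi_n$ together with $\langle \pi_n,1\rangle = 1$ will turn each identity into an inequality on moments of $\pi_n$. Throughout I assume $\theta>0$, which is implicit in the model.

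\emph{Lower bound on $\lab_n$.} I would use the standard fact that the Perron root of an irreducible Metzler matrix dominates every diagonal entry. Evaluate $\Lb_n(*,*)$:
\begin{itemize}
\item the single vertex has no edge, so there is no fragmentation;
\item the isolation term is $-\theta|*| + \theta = 0$;
\item the only loss is from growth, namely $-\beta \sum_{|\tt'|\le n} p_{\tt'} \ge -\beta$.
\end{itemize}
Hence $\lab_n \ge -\beta$.

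\emph{Upper bound on $\lab_n$.} Take $f = |\cdot|$ and compute each part.
\begin{itemize}
\item Growth: $|\tt\oplus_v\tt'| - |\tt| = |\tt'|$, so the growth part is at most $\beta |\tt|\,\mathbf{m}_1$.
\item Fragmentation: it preserves total size, so its contribution is $0$.
\item Isolation: $(\L_I+\theta)|\cdot|(\tt) = -\theta|\tt|(|\tt|-1) \le 0$.
\end{itemize}
This gives $\lab_n \langle \pi_n, |\cdot|\rangle \le \beta \mathbf{m}_1 \langle \pi_n, |\cdot|\rangle$, so $\lab_n \le \beta \mathbf{m}_1$, which is stronger than the stated $\beta\mathbf{m}_1+\theta$. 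Alternatively, keeping the harmless $+\theta$ term as a crude bound yields exactly the stated constant.

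\emph{Second moment.} Take $f = |\cdot|^2$ and again compute each part.
\begin{itemize}
\item Growth: $(|\tt|+|\tt'|)^2 - |\tt|^2 = 2|\tt||\tt'| + |\tt'|^2$, so the growth term is at most $\beta|\tt|\,(2\mathbf{m}_1|\tt| + \mathbf{m}_2)$. Assumption~\ref{assum} makes this finite uniformly in $n$, since truncation only removes nonnegative terms.
\item Fragmentation: it gives $-2\gamma\sum_{v\neq \rt} |\tt_v|\,(|\tt|-|\tt_v|) \le 0$, which I would simply drop.
\item Isolation: it gives $-\theta(|\tt|-1)|\tt|^2$.
\end{itemize}
Combining with $\lab_n \ge -\beta$ yields
\begin{align*}
\theta \langle \pi_n, |\cdot|^3\rangle \le (2\beta\mathbf{m}_1 + \theta + \beta)\langle \pi_n, |\cdot|^2\rangle + \beta\mathbf{m}_2 \langle \pi_n, |\cdot|\rangle .
\end{align*}
Using $x^2 \le \epsilon x^3 + C_\epsilon$ and $x \le \epsilon x^3 + C_\epsilon$ for $x\ge1$, together with $\langle\pi_n,1\rangle=1$, I would absorb the right-hand side into the left. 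This gives $\langle\pi_n,|\cdot|^3\rangle \le C$ uniformly in $n$, hence also \eqref{uniformBoundOfPi_n}.

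The main point needing care is the sign bookkeeping for the isolation term. The cubic coercivity $-\theta|\tt|^3$ must dominate the quadratic growth contribution, and this is precisely where the lower bound $\lab_n \ge -\beta$ from the first step enters. I also need to check that the truncation in \eqref{eq.truncated} only removes nonnegative contributions, so that the untruncated moments $\mathbf{m}_1,\mathbf{m}_2$ give valid upper bounds uniformly in $n$.
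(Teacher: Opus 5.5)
Your proof is correct. Part 1 matches the paper. The paper's inequality $\Lb_n\Ind{*}\ge-\beta\Ind{*}$, paired with $\pi_n(*)>0$, is exactly the proof of the diagonal-domination fact you cite. Your upper bound uses the same test function $|\cdot|$. You discard the isolation term and get the sharper $\lab_n\le\beta\mathbf{m}_1$ directly. The paper instead keeps that term as $-\theta|\tt|^2+\theta|\tt|$ and reads off $\lab_n\le\theta+\beta\mathbf{m}_1$ from the positivity of $\langle\pi_n,|\cdot|^2\rangle$.

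The genuine difference is in part 2. The paper gets the second moment from the same identity tested against $|\cdot|$. There the isolation term already supplies the coercive $-\theta|\tt|^2$, so $\lab_n\ge-\beta$ gives $\langle\pi_n,|\cdot|^2\rangle\le\frac{\theta+\beta+\beta\mathbf{m}_1}{\theta}\langle\pi_n,|\cdot|\rangle$. Cauchy--Schwarz with $\langle\pi_n,1\rangle=1$ then closes the bound, using only $\mathbf{m}_1<\infty$.

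You go one level higher and test against $|\cdot|^2$. This needs $\mathbf{m}_2<\infty$ from Assumption~\ref{assum}, but it yields a uniform bound on $\langle\pi_n,|\cdot|^3\rangle$. Through the Fatou step in Lemma~\ref{lefteigenvector}, this would even give $\langle\pi,|\cdot|^3\rangle<\infty$, one moment more than the paper records.

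So the paper's route is more economical in hypotheses for this lemma and handles both the eigenvalue bound and the moment bound with a single computation. Yours spends the second moment of $p$, which is assumed anyway, to buy an extra moment of $\pi$.
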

    
    \begin{proof}
        We first prove \eqref{eigenvaluebound}. Let $\Ind{*}$ be the indicator function of the single point $*$. It is easy to verify that 
        $$\Lb_n \Ind{*}\ge-\beta\Ind{*}.$$
        Then we have: $$\lab_n\langle\pi_n,\Ind{*}\rangle=\langle\Lb_n^*\pi_n,\Ind{*}\rangle=\langle\pi_n,\Lb_n\Ind{*}\rangle\ge-\beta\langle\pi_n,\Ind{*}\rangle.$$
        Since $\pi_n(*)>0$, we get $\lab_n\ge-\beta$.
        Now we compute
        \begin{align*}
            &\quad\Lb_n|\cdot|(\tt)=\beta\Ind{|\tt|\le n}\sum_{v\in\tt}\sum_{|\tt'|\le n}p_{\tt'}|\tt'|-\theta(|\tt|-1)|\tt|\le-\theta|\tt|^2+(\theta+\beta\mathbf{m}_1)|\tt|.
        \end{align*}
        So we have 
        $$|\cdot|^2\le\frac{(\theta+\beta\mathbf{m}_1)}{\theta}|\cdot|-\frac{1}{\theta}\Lb_n|\cdot|.$$
        Pairing with $\pi_n$ and using $\Lb_n^*\pi_n=\lab_n\pi_n$:
        \begin{equation}
        \label{eq:EstimateOfpi_nf2}
            \langle\pi_n,|\cdot|^2\rangle\le\frac{\theta+\beta\mathbf{m}_1}{\theta}\langle\pi_n,|\cdot|\rangle-\frac{1}{\theta}\langle\pi_n,\Lb_n|\cdot|\rangle=\frac{\theta+\beta\mathbf{m}_1}{\theta}\langle\pi_n,|\cdot|\rangle-\frac{\lab_n}{\theta}\langle\pi_n,|\cdot|\rangle.
        \end{equation}
         Then we use $\langle\pi_n,|\cdot|^2\rangle>0$ to derive $$\lab_n\le\theta+\beta\mathbf{m}_1.$$
         Then, we prove \eqref{uniformBoundOfPi_n}. Using \eqref{eq:EstimateOfpi_nf2} and $\lab_n\ge-\beta$, we have
         $$\langle\pi_n,|\cdot|^2\rangle\le\frac{\theta+\beta+\beta\mathbf{m}_1}{\theta}\langle\pi_n,|\cdot|\rangle.$$
         Apply Cauchy's inequality:
         $$\langle\pi_n,|\cdot|^2\rangle\le\frac{\theta+\beta+\beta\mathbf{m}_1}{\theta}\langle\pi_n,|\cdot|^2\rangle^{\frac{1}{2}}.$$
         This gives \eqref{uniformBoundOfPi_n} by choosing $C=(\frac{\theta+\beta+\beta\mathbf{m}_1}{\theta})^2$ and completes the proof.
    \end{proof}

\subsection{Estimates of truncated right eigenvector} 
This subsection aims to derive the following estimates:
\begin{lemma}\label{lem:eigenvectorUpperbound}    
     For any $\alpha>\max\Ll\{0,\frac{\gamma-\theta}{\gamma+\theta}\Rr\}$, there exists $C>0$ such that  for every $n\in\N$,$\tt\in A_n$, we have
     \begin{align*}
         h_n(\tt)\le C|\tt|^{\alpha}.
     \end{align*}  
     \end{lemma}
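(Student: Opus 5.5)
The plan is a Lyapunov comparison (maximum principle) on the finite set $A_n$ with $V(\tt):=\vert\tt\vert^{\alpha}$, followed by a uniform Harnack-type bound on small trees using the normalization $h_n(*)=1$.

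\textbf{Step 1 (drift estimate).} Write $N=\vert\tt\vert$ and $g_N(x):=x^\alpha+(N-x)^\alpha-N^\alpha$. I will compute $\Lb_n V$ term by term.

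The growth part is at most $\beta N\sum_{\tt'}p_{\tt'}\big((N+\vert\tt'\vert)^\alpha-N^\alpha\big)$. This is $\le \beta\,\mathbf m_1 N^{\alpha}$ for $\alpha\le 1$ (and $O(N^{\alpha})$ in general), so it is of lower order.

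The isolation part of $\Lb$ is $-\theta(N-1)N^\alpha$.

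The fragmentation part is $\gamma\sum_{v\ne\rt} g_N(\vert\tt_v\vert)$. The key combinatorial claim is that for every recursive tree of size $N$,
\begin{align*}
\sum_{v\in\tt\setminus\{\rt\}} g_N(\vert\tt_v\vert)\;\le\; \frac{1-\alpha}{1+\alpha}\,N^{1+\alpha}+C N^{\alpha}.
\end{align*}
The right-hand side is the value for the path, by a Riemann sum of $\int_0^1(u^\alpha+(1-u)^\alpha-1)\,du$. Given the claim, we get
\[
\Lb_n V\le-\Big(\theta-\gamma\tfrac{1-\alpha}{1+\alpha}\Big)N^{1+\alpha}+CN^\alpha .
\]
The coefficient is positive exactly when $\alpha>\frac{\gamma-\theta}{\gamma+\theta}$. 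Hence $\Lb_nV(\tt)/V(\tt)\to-\infty$ as $N\to\infty$, uniformly in $n$. Since $\lab_n\ge-\beta$ by \eqref{eigenvaluebound}, there is $K$ independent of $n$ such that $\Lb_nV(\tt)<\lab_n V(\tt)$ whenever $\vert\tt\vert>K$.

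To prove the combinatorial claim I would use a layer-cake argument. Since $g_N$ is symmetric about $N/2$, increasing on $[0,N/2]$, and $g_N(0)=0$, it suffices to show the counting bound
\[
\#\{v\neq\rt:\ \min(\vert\tt_v\vert,N-\vert\tt_v\vert)\ge k\}\le N-2k+2 .
\]
The two-arm tree shows this is sharp. I would prove it by noting two facts. First, the vertices with $\vert\tt_v\vert>N/2$ form an ancestral chain from the root. Second, among vertices with $k\le\vert\tt_v\vert\le N/2$, the subtree sizes strictly decrease along ancestral lines, so each maximal chain of such vertices has length bounded by its top subtree size minus $k$ plus one. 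Summing over chains then uses that disjoint top subtrees have total size at most $N$, possibly by induction on $N$.

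I expect this combinatorial inequality to be the main obstacle. It is the only place where the threshold $\frac{\gamma-\theta}{\gamma+\theta}$ enters.

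\textbf{Step 2 (maximum principle).} Since $A_n$ is finite, let $\tt_0$ maximize $h_n/V$ over $A_n$ and put $R=h_n(\tt_0)/V(\tt_0)$. Then $RV-h_n\ge0$ on $A_n$ and vanishes at $\tt_0$. As $\Lb_n$ is a Metzler matrix (nonnegative off-diagonal entries), $\Lb_n(RV-h_n)(\tt_0)\ge0$, i.e. $R\,\Lb_nV(\tt_0)\ge\lab_n h_n(\tt_0)=\lab_n R V(\tt_0)$. If $\vert\tt_0\vert>K$ this contradicts Step 1. Hence
\[
h_n\le V\cdot\max_{\vert\tt\vert\le K,\ \tt\in A_n} h_n(\tt) .
\]

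\textbf{Step 3 (uniform bound on small trees).} It remains to show $\sup_n\max_{\vert\tt\vert\le K,\ \tt\in A_n}h_n(\tt)<\infty$.

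For $\tt\in A_n$ with $\vert\tt\vert\le K$, take an admissible path $*=\tt_0\to\dots\to\tt_m=\tt$. Because fragmentation only decreases size and growth attaches only trees of size at most $n$, I can choose such a path inside $\{\vert\cdot\vert\le 2K\}$, independent of $n$ once $n\ge K$. The finitely many $n<K$ are handled trivially.

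Evaluate the eigen-equation at $\tt_i$ and drop all nonnegative off-diagonal terms except the one to $\tt_{i+1}$. This gives
\[
h_n(\tt_{i+1})\le\frac{\lab_n+q(\tt_i)}{\Lb_n(\tt_i,\tt_{i+1})}\,h_n(\tt_i),
\]
where $q(\tt_i)$ is the total jump rate out of $\tt_i$, bounded in terms of $K$. The denominator is a fixed positive rate once $n\ge 2K$. Iterating along the path, and using $h_n(*)=1$ together with the upper bound in \eqref{eigenvaluebound}, gives a bound depending only on $K$. This finishes the proof.
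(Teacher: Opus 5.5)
Your architecture is the paper's. The maximum principle for $h_n/\vert\cdot\vert^\alpha$ on $A_n$ matches. Your drift bound rests on a combinatorial claim equivalent to Lemma~\ref{Lem:TreeEstimate}: indeed $\sum_{v\neq\rt}g_N(\vert\tt_v\vert)=F_\alpha(\tt)-(N-1)N^\alpha$, and your path value is $2\sum_{i<N}i^\alpha-(N-1)N^\alpha$. Your chaining through the eigen-equation is a discrete substitute for the paper's bound $h_n(\tt)\le e^{\lab_n}/M_{1,n}(*,\tt)$ from Lemma~\ref{lem:PointUpperOfh_n}.

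The step that fails as written is in Step 3. The claim that the path from $*$ to $\tt$ can be taken inside $\{\vert\cdot\vert\le 2K\}$ is false, and your justification only gives the bound $2n$. For instance, take $p_{\tt^\dagger}=1$ for a single tree with $\vert\tt^\dagger\vert>2K$. The only transition out of $*$ is then growth to a tree of size $\vert\tt^\dagger\vert+1$. So every path from $*$ to a tree of size between $2$ and $K$ leaves $\{\vert\cdot\vert\le 2K\}$.

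The repair is the explicit construction behind Lemma~\ref{lem:A_nLarge}. Build $\tt$ one vertex at a time. Each time, graft a fixed $\tt^\dagger$ with $p_{\tt^\dagger}>0$ at the required vertex, then cut, one fragmentation at a time, the edges from $\rt(\tt^\dagger)$ to its children. This path stays in $\{\vert\cdot\vert\le K+\vert\tt^\dagger\vert\}$ and is the same for all $n\ge K+\vert\tt^\dagger\vert$. Every step has rate at least $\min\{\beta p_{\tt^\dagger},\gamma\}$, and the total jump rates along it are bounded in terms of $K+\vert\tt^\dagger\vert$. So your product bound goes through with $2K$ replaced by $K+\vert\tt^\dagger\vert$, and the finitely many smaller $n$ are handled trivially.

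There are two smaller points.

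First, reduce at the outset to $\alpha<1$; the statement is monotone in $\alpha$ since $\vert\tt\vert\ge 1$. Your layer-cake uses that $g_N$ increases on $[0,N/2]$. For $\alpha>1$ the combinatorial claim is actually false: the star gives $\sum_v g_N\approx-\alpha N^\alpha$.

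Second, your counting bound is true, even with $N-2k+1$, which the path attains, so $N-2k+2$ is not sharp. However, the chain-decomposition sketch does not close, because maximal chains in the light forest do not have disjoint top subtrees. Here is a clean proof. The edges both of whose sides contain at least $k$ vertices form a connected subtree, since any edge on the path between two such edges inherits the property. If this subtree has $s\ge1$ edges, it has $s+1$ vertices and at least two leaves. For each leaf, the side of its edge containing it has at least $k-1$ vertices outside the subtree, and these sets are disjoint for different leaves. Hence $N\ge s+1+2(k-1)$. Alternatively, the leaf-removal induction of Lemma~\ref{Lem:TreeEstimate} yields the summed bound directly.
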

\begin{lemma}\label{lem:eigenvectorLowerbound}
    There exists $c>0$ such that for every n, $\tt\in A_n$ we have $$h_n(\tt)\geq\frac{c}{|\tt|}.$$
    When $\gamma>\theta$, there exists $c>0$ such that for every n, $\tt\in A_n$ we have $$h_n(\tt)\ge c.$$
\end{lemma}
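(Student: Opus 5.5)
The plan is to compare $h_n$ with the test function $g(\tt)=1/\vert\tt\vert$, or with $g\equiv 1$ when $\gamma>\theta$, through a discrete minimum principle on the finite irreducible set $A_n$. Every constant must be uniform in $n$. The two inputs are the normalization $h_n(*)=1$ and the eigenvalue bound \eqref{eigenvaluebound}, which gives $\lab_n\in[-\beta,\beta\mathbf{m}_1+\theta]$.

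\textbf{Step 1 (trees of bounded size).} First I will show that for every fixed $K$ there is $c_K>0$, independent of $n$, with $h_n(\tt)\ge c_K$ whenever $\tt\in A_n$ and $\vert\tt\vert\le K$. The tool is the semigroup identity
\[
h_n(\tt)=e^{-\lab_n s}M_{s,n}h_n(\tt)\ge e^{-\lab_n s}M_{s,n}(\tt,*)\,h_n(*)=e^{-\lab_n s}M_{s,n}(\tt,*).
\]
Starting from $\tt$, with positive probability all $\vert\tt\vert-1$ edges are cut before any growth or isolation event and before time $s=1$. This produces at least one singleton, and a singleton is never isolated in the modified process. The probability of this scenario depends only on $K,\beta,\gamma,\theta$, because the truncation only switches off growth, so it is uniform in $n$. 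Together with $\lab_n\le\beta\mathbf{m}_1+\theta$ this gives $c_K$.

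\textbf{Step 2 (minimum principle for large trees).} Set $m_n:=\min_{\tt\in A_n}\vert\tt\vert h_n(\tt)$, or $\min_{\tt\in A_n} h_n(\tt)$ in the case $\gamma>\theta$, and let $\tt_0$ be a minimizer with $k=\vert\tt_0\vert$. Write the eigen-equation $\Lb_nh_n(\tt_0)=\lab_nh_n(\tt_0)$ in the form
\[
(\lab_n+\text{total jump rate})\,h_n(\tt_0)=\sum_{\tt'}\text{rate}(\tt_0\to\tt')\,h_n(\tt').
\]
On the right-hand side, bound each $h_n(\tt')$ below by $m_n g(\tt')$. I will use three elementary facts:
\begin{itemize}
\item[(a)] in the growth term, Jensen's inequality gives $\sum_{\tt'}p_{\tt'}\,k/(k+\vert\tt'\vert)\ge k/(k+\mathbf{m}_1)$;
\item[(b)] in the fragmentation term, $1/a+1/(k-a)\ge 4/k$;
\item[(c)] the isolation rate is $\theta(k-1)$, and it is offset by the $+\theta$ in $\Lb$.
\end{itemize}
With these, the aim is an inequality of the form
\[
\lab_n+\beta\mathbf{m}_1+\big(\theta+\gamma-\gamma\,\Phi(\tt_0)\big)(k-1)\ \ge\ 0,
\]
where $\Phi(\tt_0)$ is the average of $k\big(1/a_e+1/(k-a_e)\big)$ over the edges $e$ of $\tt_0$. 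If this inequality forces $k\le K_0$ for some $K_0$ independent of $n$, then $m_n\ge c_{K_0}$ by Step~1 and the lemma follows. In the case $\gamma>\theta$, the test function $g\equiv1$ makes the growth term vanish exactly, since $h_n\ge m_n$ at the children. The fragmentation term then contributes $2\gamma(k-1)m_n$ against the loss $(\gamma+\theta)(k-1)m_n$. The surplus $(\gamma-\theta)(k-1)m_n$ dominates $\vert\lab_n\vert m_n$ once $k$ is large, which gives $k\le K_0$.

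\textbf{Main obstacle.} The hard step is the case $\gamma\le\theta$ with $g=1/\vert\cdot\vert$. Fact (b) alone only gives $\Phi\ge 4$, and this beats the isolation loss only when $3\gamma>\theta$. For instance, a path gives only $\sum_e(1/a_e+1/(k-a_e))\approx 2\log k$, so the one-step minimum principle cannot absorb a linear isolation rate $\theta(k-1)$. To get around this, I will first replace the one-step argument by the semigroup version of Step~1 applied to all trees. The idea is to bound $e^{-\lab_n s}\E_{\delta_{\tt}}\langle X_{s,n},g\rangle$ from below over a short time $s$, tracking the fragments as the edges of $\tt$ are cut: a vertex that becomes a singleton stays alive forever. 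I expect to show that the expected number of singletons or small clusters produced from $\tt$ before isolation is at least of order $1/\vert\tt\vert$, for example by following only the fragment that contains the last-cut vertex. Combined with Step~1, this gives $h_n(\tt)\ge c/\vert\tt\vert$. If this probabilistic estimate turns out to be too lossy, my fallback is a minimum principle with $g=\vert\cdot\vert^{-r}$ and a suitable $r\ge1$.
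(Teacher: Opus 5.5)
Your Step~1 is sound. One detail is missing: since $M_{1,n}(\tt,*)$ counts singletons, the singletons produced must also see no growth event before time $1$, which costs a factor $e^{-\beta}$ uniformly in $n$. Your $\gamma>\theta$ argument is correct and differs from the paper's. At a minimizer $\tt_0$ of $h_n$ on $A_n$ you get $\lab_n\ge\Lb_n\mathbf 1(\tt_0)=(\gamma-\theta)(\vert\tt_0\vert-1)$, hence $\vert\tt_0\vert\le 1+(\beta\mathbf m_1+\theta)/(\gamma-\theta)$, and Step~1 concludes. The paper instead proves Lemma~\ref{lem:IsolatedVertexAppear}, a comparison argument on $q_s(\tt)=\P_{\delta_\tt}(\tau^{(n)}\le s)$, and inserts the resulting uniform probability of producing a singleton into $h_n(\tt)\ge e^{-\lab_n}M_{1,n}(\tt,*)$. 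Your minimum principle is shorter, and it mirrors the maximum principle used for Lemma~\ref{lem:eigenvectorUpperbound}.

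The gap is the general bound $h_n(\tt)\ge c/\vert\tt\vert$. For $3\gamma\le\theta$ you give no argument, only ``I expect to show\dots'' and an unexecuted fallback. The obstacle that made you abandon the minimum principle is also a slip of a factor $k$. In your own inequality the fragmentation gain is $\gamma(k-1)\Phi(\tt_0)=\gamma k\sum_e(1/a_e+1/(k-a_e))$. For a path this is about $2\gamma k\log k$, and it must be compared with $(\gamma+\theta)(k-1)$, so the path is a favourable case, not a bad one. In fact $\Phi(\tt_0)\ge\sum_{j=1}^{k-1}1/j$ for every tree. The edge above a non-root vertex $v$ splits $\tt_0$ into pieces of sizes $\vert(\tt_0)_v\vert$ and $k-\vert(\tt_0)_v\vert$, so $\sum_e k^2/(a_e(k-a_e))\ge k\sum_{v\ne\rt}1/\vert(\tt_0)_v\vert$. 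The claim inside the proof of Lemma~\ref{Lem:TreeEstimate} (subtree sizes in decreasing order satisfy $s_j\le k-j$) then gives $\sum_{v\ne\rt}1/\vert(\tt_0)_v\vert\ge\sum_{j=1}^{k-1}1/j$. Together with $\lab_n\le\beta\mathbf m_1+\theta$, your inequality becomes $\gamma k\sum_{j<k}1/j\le 2\beta\mathbf m_1+\theta+(\gamma+\theta)(k-1)$. This bounds $k$ uniformly in $n$, and Step~1 finishes for all $\gamma,\theta$.

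The paper's route for this case is more direct. It uses $h_n(\tt)\ge e^{-\lab_n}M_{1,n}(\tt,*)$. The total jump rate of $\tt$ is at most $(\beta+\gamma+\theta)\vert\tt\vert$ and at least $\gamma+\theta$ when $\vert\tt\vert\ge2$. So with probability at least $(1-e^{-(\gamma+\theta)})\gamma/((\beta+\gamma+\theta)\vert\tt\vert)$, the first jump from $\tt$ occurs before time $1$ and cuts a leaf edge. The resulting singleton then stays unchanged until time $1$ with probability at least $e^{-\beta}$.
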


\subsubsection{Upper bound}
    We first prove that $A_n$ is sufficiently large for all large $n$:
    \begin{lemma}
    \label{lem:A_nLarge}
        For any $m\in\N$, there exists $N$ such that for every $n\ge N$, we have
        $$\TT_m\subset A_n.$$
    \end{lemma}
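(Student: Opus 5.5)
The plan is to show that every recursive tree of size $m$ can be reached from $*$ through a chain of positive rates of $\Lb_n$, and that every tree along this chain has size at most $m-1+|\tt_0|$. Here $\tt_0$ is a fixed tree with $p_{\tt_0}>0$; it exists because $\sum_{\tt}p_{\tt}=1$. I will take $N:=\max\{m,|\tt_0|\}$, and use $\gamma>0$ (implicit in the model) so that fragmentation entries are positive. The construction uses the two kinds of off-diagonal entries of $\Lb_n$ read off from \eqref{eq.truncated}:
\begin{itemize}
\item[(a)] if $|\tt|\le n$, $|\tt'|\le n$ and $p_{\tt'}>0$, then $\Lb_n(\tt,\tt\oplus_v\tt')\ge \beta p_{\tt'}>0$ for every $v\in\tt$;
\item[(b)] for every non-root vertex $v$, $\Lb_n(\tt,\tt\ominus\tt_v)\ge\gamma>0$ and $\Lb_n(\tt,\tt_v)\ge\gamma>0$.
\end{itemize}

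Fix $\tt\in\TT_m$, represented on $\{1,\dots,m\}$, and let $\tt^{(k)}$ denote its restriction to $\{1,\dots,k\}$. Since labels increase along paths from the root, $\tt^{(k)}$ is again a recursive tree, and $\tt^{(k)}$ is $\tt^{(k-1)}$ with vertex $k$ attached to its parent $j_k<k$. I will prove by induction on $k$ that $\tt^{(k)}\in A_n$ for all $n\ge N$. The base case is $\tt^{(1)}=*\in A_n$.

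For the inductive step, start from $\tt^{(k-1)}$, which has size $k-1\le m-1\le n$. Since also $|\tt_0|\le n$, entry (a) gives a positive rate to $\tt^{(k-1)}\oplus_{j_k}\tt_0$. In this new tree the root $r$ of the copy of $\tt_0$ carries a label larger than every label in $\tt^{(k-1)}$, and it is attached to $j_k$. Next, for each child $w$ of $r$ inside the copy, in turn, apply entry (b) to move to $\cdot\ominus(\cdot)_w$. This cuts off the entire subtree hanging below $w$ and keeps the component containing the original root. After all these cuts, the remaining tree is $\tt^{(k-1)}$ with one extra leaf $r$ attached to $j_k$, and $r$ is larger than all other labels. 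Under the order-preserving relabeling defining $\sim$, this tree is exactly $\tt^{(k)}$. Every intermediate tree is reached by a positive-rate step from a tree in $A_n$, so all of them, and in particular $\tt^{(k)}$, lie in $A_n$. Taking $k=m$ gives $\tt\in A_n$. Since $N$ does not depend on $\tt$, this gives $\TT_m\subset A_n$ for all $n\ge N$.

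The only point needing care is the bookkeeping of labels and of the truncation constraint. One must check that the equivalence class of the pruned tree is really $\tt^{(k)}$; this holds because the surviving new vertex is the largest label. One must also check that each growth step is performed from a tree of size at most $n$ with an attached tree of size at most $n$. The sizes during the construction never exceed $m-1+|\tt_0|\le 2n$, which is consistent with $A_n\subset\TT_{2n}$. I expect no real obstacle beyond this bookkeeping.
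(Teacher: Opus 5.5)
Your proof is correct and is essentially the paper's argument: you build $\tt$ vertex by vertex, each time grafting a copy of a tree with positive attachment probability at the required vertex and then pruning it back by fragmentation. Your $N=\max\{m,|\tt_0|\}$ works as well as the paper's $N=m+|\tt'|$, since growth is only applied to trees of size at most $m-1$ and fragmentation is not truncated. Your pruning by successive cuts, one per child of $r$, is in fact more careful than the paper's claim that a single fragmentation gives $\Lb_n(\tt_{i+\frac12},\tt_{i+1})>0$. That claim can fail (e.g.\ for $\tt_i=*$ and $\tt'$ a root with two leaf children), although the conclusion $\tt_{i+1}\in A_n$ still holds.
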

    \begin{proof}
        We take $\tt'\in\TT$ such that $p_{\tt'}>0$ and choose $$N=m+|\tt'|.$$
        For every $|\tt|\le m$, take $k=|\tt|-1$, there exists a sequence of trees $\{\tt_i\}_{i=1}^{k}$ such that $$\tt_{0}=*,\tt_{k}=\tt,$$ and $\tt_{i+1}$ is obtained by attaching a single point on the vertex $v_{i}$ of $\tt_i$. We let 
        $$\tt_{i+\frac{1}{2}}=\tt_{i}\oplus_{v_i}\tt'.$$
        Since $|\tt'|\le n,\  |\tt_i|\le n, \ p_{\tt'}>0\ $, we have 
        $$\Lb_n(\tt_i,\tt_{i+\frac{1}{2}})>0.$$
        And by fragmentation, we also have
        $$\Lb_n(\tt_{i+\frac{1}{2}},\tt_{i+1})>0.$$
        Thus, $\tt_k$ is possible to appear in the process $X_{s,n}$ and we have $\tt=\tt_k\in A_n$.
    \end{proof}
    \begin{corollary}\label{Cor:M_{t,n}LowerBound}
        For every $\tt\in \TT$, there exists $N>0$ such that for all $s_0>0$ we have
        $$\inf_{n\ge N}M_{s_0,n}(*,\tt)>0.$$
    \end{corollary}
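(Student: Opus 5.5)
We need $\inf_{n\ge N} M_{s_0,n}(*,\tt)>0$ for each fixed $\tt$ and $s_0>0$. The plan is to exhibit one explicit finite sequence of jumps from $*$ to $\tt$ that occurs in time $s_0$. We then bound its probability from below by a quantity that does not depend on $n$, using the fact that all relevant rates stabilize once $n$ is large.

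\textbf{Step 1 (the path and $N$).} Take $N=|\tt|+|\tt'|$ with $p_{\tt'}>0$, exactly as in Lemma~\ref{lem:A_nLarge}. That lemma supplies a path
\[
*=\tt_0\to\tt_{1/2}\to\tt_1\to\cdots\to\tt_k=\tt
\]
of length $2k$, with $k=|\tt|-1$. Every tree on the path has size at most $N\le n$. Each growth step therefore has rate $\Lb_n(\tt_i,\tt_{i+1/2})\ge\beta p_{\tt'}>0$, and each fragmentation step has rate $\Lb_n(\tt_{i+1/2},\tt_{i+1})\ge\gamma>0$. Both bounds are uniform in $n\ge N$.

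\textbf{Step 2 (uniform bound on the total jump rate).} The total rate out of any tree $\mathbf s$ with $|\mathbf s|\le N$ is bounded uniformly in $n$. Growth contributes at most $\beta|\mathbf s|\sum_{|\tt''|\le n}p_{\tt''}\le\beta N$. Fragmentation contributes $\gamma(|\mathbf s|-1)$. The isolation part $(\L_I+\theta)$ is only a diagonal killing term of size at most $\theta N$, so it does not enter the jump rates. Hence every tree on the path has total exit-plus-killing rate at most $R:=(\beta+\gamma+\theta)N$, independent of $n$.

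\textbf{Step 3 (lower bound on the probability).} Follow a single tracked cluster: start at $*$ and, at each fragmentation, keep the piece that lies on the path. Divide $[0,s_0]$ into $2k+1$ intervals of length $s_0/(2k+1)$. The event ``in each of the first $2k$ intervals the prescribed jump happens first and nothing else happens, and in the last interval nothing happens'' has probability at least
\[
\prod_{j}\Big(\tfrac{r_j}{R}\,(1-e^{-R s_0/(2k+1)})\Big)\cdot e^{-Rs_0},
\]
where the $r_j\in\{\beta p_{\tt'},\gamma\}$ are the rates from Step 1. This probability is positive and independent of $n\ge N$.

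On this event the process contains the tree $\tt$ at time $s_0$, so $\langle X_{s_0,n},\Ind{\tt}\rangle\ge1$. Alternatively one can write the argument with matrix exponentials, using $e^{s\Lb_n}\ge e^{-Rs}\,e^{s(\Lb_n+R)}$ entrywise on a finite block and expanding the series. Taking the expectation gives the claim.

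The only delicate point is Step 2: the rates must be bounded uniformly in $n$. This holds because the path stays among trees of size at most $N$, and because $\sum_{\tt''}p_{\tt''}\le1$ controls the growth rate no matter how the truncation enters.
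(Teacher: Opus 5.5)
Your proof is correct and follows the paper's route. You use the same path from Lemma~\ref{lem:A_nLarge} and bound $M_{s_0,n}(*,\tt)$ below by the probability that a tracked cluster traverses exactly that path within time $s_0$. Your Step~2 also makes precise something the paper only asserts, namely that this probability is ``independent of $n$''. In fact the holding rates vary with $n$ through $\beta|\mathbf{s}|\sum_{|\tt''|\le n}p_{\tt''}$, and your uniform bound $R$ is what gives a lower bound uniform over $n\ge N$.
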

    \begin{proof}
        We use the same definition of $N, \tt_i,\tt_{i+\frac{1}{2}}$ as in the proof of Lemma~\ref{lem:A_nLarge}. For every $n>N$, the probability that $X_{t,n}$ has the cluster $\tt$ is bounded below by the probability that the evolution of $X_{s,n}$ before $s_0$ is exactly $$\tt_{0},\tt_{\frac{1}{2}},\tt_{1},...,\tt_{m}.$$
        The latter event is independent of $n$ as long as $n>N$, which completes the proof.
    \end{proof}
    Then we prove a weak upper bound for $h_n$:
    \begin{lemma}
    \label{lem:PointUpperOfh_n}
    Given $\tt \in \TT$, there exists a constant $C_{\tt}$ independent of $n$ such that $$\forall n \in \N, \qquad h_n(\tt)\le C_{\tt}.$$
    \end{lemma}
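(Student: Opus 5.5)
The plan is to exploit the normalization $h_n(*)=1$ together with the semigroup identity $M_{s,n}h_n = e^{\lab_n s}h_n$ evaluated at the root state $*$. Fix $\tt\in\TT$ and $s_0>0$, for instance $s_0=1$. By Corollary~\ref{Cor:M_{t,n}LowerBound} there exist $N$ and $\delta_{\tt}>0$ with $M_{s_0,n}(*,\tt)\ge \delta_{\tt}$ for all $n\ge N$. Since all entries of $M_{s_0,n}$ and $h_n$ are nonnegative, we get
\begin{align*}
e^{\lab_n s_0} = e^{\lab_n s_0} h_n(*) = \sum_{\tt'\in A_n} M_{s_0,n}(*,\tt') h_n(\tt') \ge M_{s_0,n}(*,\tt)\, h_n(\tt) \ge \delta_{\tt}\, h_n(\tt).
\end{align*}
Using the uniform bound \eqref{eigenvaluebound}, $\lab_n\le \beta\mathbf{m}_1+\theta$, this yields $h_n(\tt)\le \delta_{\tt}^{-1} e^{(\beta\mathbf{m}_1+\theta)s_0}$ for every $n\ge N$.

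It remains to handle the finitely many indices $n<N$. For each such $n$, either $\tt\notin A_n$, in which case $h_n(\tt)=0$ by convention, or $h_n(\tt)$ is a finite number. Taking $C_{\tt}$ to be the maximum of $\delta_{\tt}^{-1}e^{(\beta\mathbf{m}_1+\theta)s_0}$ and $\max_{n<N}h_n(\tt)$ then gives a constant independent of $n$.

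The only delicate point is the application of Corollary~\ref{Cor:M_{t,n}LowerBound}. We need the lower bound on $M_{s_0,n}(*,\tt)$ to be uniform in $n\ge N$ for a fixed $s_0$. This holds because the probability of the explicit path $\tt_0,\tt_{1/2},\tt_1,\dots$ within time $s_0$ involves only jump rates at trees of size at most $N$. For $n\ge N$ these rates coincide with those of the untruncated modified process, so they do not depend on $n$. The expectation $M_{s_0,n}(*,\tt)$ dominates this probability, because it counts the expected number of copies of $\tt$ and is therefore at least the probability that a copy is present.
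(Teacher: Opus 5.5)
Your proof is correct and follows the paper's own argument: evaluate $M_{1,n}h_n=e^{\lab_n}h_n$ at $*$ using $h_n(*)=1$, discard every term except $\tt$, and combine Corollary~\ref{Cor:M_{t,n}LowerBound} with $\lab_n\le\beta\mathbf{m}_1+\theta$; your explicit treatment of the finitely many $n<N$ fills a step the paper leaves implicit.

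One small correction to your side remark: the path probability is not literally independent of $n$, because the total exit rate $\beta|\tt|\sum_{|\tt'|\le n}p_{\tt'}+(\gamma+\theta)(|\tt|-1)$ at a state on the path depends on $n$. It is nevertheless bounded below uniformly in $n\ge N$, because the individual transition rates along the path are fixed and the exit rates never exceed their untruncated values.
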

    \begin{proof}
    By Corollary~\ref{Cor:M_{t,n}LowerBound} we can choose $N$ such that 
    $$\inf_{n\ge N}M_{1,n}(*,\tt)>0.$$
    Then for any $n>N$ we use
    \begin{align*}
        e^{\lab_n}=e^{\lab_n}h_n(*)=M_{1,n}h_n(*)=\sum_{\tt'\in A_n}M_{1,n}(*,\tt')h_n(\tt')\ge M_{1,n}(*,\tt)h_n(\tt)
    \end{align*}
    and the upper bound of $\lab_n$ in \eqref{eigenvaluebound} to obtain
    $$h_n(\tt)\le\frac{e^{\lab_n}}{M_{1,n}(*,\tt)}\le\frac{e^{\sup_{n}\lab_n}}{\inf_{n\ge N}M_{1,n}(*,\tt)}.$$
    This completes the proof since $\frac{e^{\sup_{n}\lab_n}}{\inf_{n\ge N}M_{1,n}(*,\tt)}<+\infty$.
    \end{proof}

    To give a precise upper bound, the following estimate is needed:
    \begin{lemma}
    \label{Lem:TreeEstimate}
    Let \(0<\alpha\le 1\), and let \(\tt \in \TT\). For every edge \(e\in E(\tt)\), deleting \(e\) gives two connected components with sizes \(t_{e,1}\) and \(t_{e,2}\). Define 
    $$F_\alpha(\tt):=\sum_{e\in E(\tt)}\left(t_{e,1}^{\alpha}+t_{e,2}^{\alpha}\right).$$ 
    Then it satisfies 
    $$F_\alpha(\tt)\le 2\sum_{i=1}^{|\tt|-1} i^\alpha.$$
    \end{lemma}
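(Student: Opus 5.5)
The proof is by induction on $|\tt|$, removing a leaf. Equality holds for the path, whose edges split it into components of sizes $(i,|\tt|-i)$ for $i=1,\dots,|\tt|-1$. We show every tree does no better, using only that $x\mapsto x^\alpha$ is concave and increasing on $[0,\infty)$ for $0<\alpha\le 1$. Only the unlabeled tree structure of $\tt$ matters. Write $n=|\tt|$ and $S(k)=\sum_{i=1}^{k} i^\alpha$. The cases $n=1,2$ are immediate: there $F_\alpha(\tt)=0$, respectively $2=2S(1)$.

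\textbf{Step 1: leaf decomposition.} For $n\ge 3$, pick any leaf $\ell$ of $\tt$, let $p$ be its neighbour, and let $\tt'$ be the tree with $n-1$ vertices obtained by deleting $\ell$. The edge $\{\ell,p\}$ contributes $1+(n-1)^\alpha$ to $F_\alpha(\tt)$. Every other edge $e$ is an edge of $\tt'$, and it splits $\tt'$ into two components, of sizes $a_e$ (the side containing $p$) and $n-1-a_e$. In $\tt$ only the side containing $p$ gains a vertex, so
\[
F_\alpha(\tt)=F_\alpha(\tt')+1+(n-1)^\alpha+\sum_{e\in E(\tt')}\bigl((a_e+1)^\alpha-a_e^\alpha\bigr).
\]
By the induction hypothesis $F_\alpha(\tt')\le 2S(n-2)$, and $2S(n-1)-2S(n-2)=2(n-1)^\alpha$. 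It therefore suffices to show
\[
\sum_{e\in E(\tt')}\bigl((a_e+1)^\alpha-a_e^\alpha\bigr)\le (n-1)^\alpha-1=\sum_{k=1}^{n-2}\bigl((k+1)^\alpha-k^\alpha\bigr).
\]

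\textbf{Step 2: reduction to a counting statement.} The increment $\Delta(a)=(a+1)^\alpha-a^\alpha$ is nonincreasing in $a$, by concavity. Sort the $n-2$ values $a_e$ increasingly. It is enough to show that the $k$-th smallest is at least $k$, since then the sums compare termwise. Equivalently, for each $k\in\{1,\dots,n-2\}$,
\[
\#\{e\in E(\tt'):a_e\le k\}\le k .
\]

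\textbf{Step 3: the counting bound, which is the main point.} Root $\tt'$ at $p$. Each edge $e$ corresponds to its lower endpoint $v\neq p$, and $n-1-a_e$ is the size of the subtree $\tt'_v$ below $v$. Set $m=n-1-k\ge 1$. The claim becomes: the set $S$ of non-root vertices $v$ with $|\tt'_v|\ge m$ has $|S|\le n-1-m$.

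The set $S\cup\{p\}$ is closed under taking ancestors, because subtree sizes decrease going down the tree. So if $S\neq\emptyset$, it contains a vertex $u\in S$ none of whose children lie in $S$. The subtree $\tt'_u$ has at least $m$ vertices, and all of them except $u$ lie outside $S\cup\{p\}$. Hence at least $m-1$ of the $n-1$ vertices of $\tt'$ lie outside $S\cup\{p\}$, which gives
\[
|S|\le (n-1)-1-(m-1)=n-1-m .
\]
If $S=\emptyset$ the bound is trivial. This closes the induction.

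We expected the main obstacle to be finding the right monotone comparison in Step 2, since $\Delta$ is decreasing and we need the $a_e$ to be large. Once the problem is reduced to the counting statement, the ancestor-closure argument disposes of it, and it works for an arbitrary choice of leaf.
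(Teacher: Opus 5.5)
Your proof is correct and is essentially the paper's argument: delete a leaf, use that the increments $(x+1)^\alpha-x^\alpha$ are nonincreasing, and reduce to the claim that at most $m-q$ non-root vertices have subtree size at least $q$ in the remaining $m$-vertex tree rooted at the leaf's neighbour, so that the sorted root-side component sizes satisfy $x_j\ge j$. The only difference is how you prove that counting claim: the paper inducts on the tree size through the branches at the root, while you get it directly by taking a vertex of $S$ with no child in $S$ and using that $S\cup\{p\}$ is closed under ancestors, which is a slightly shorter, non-inductive route to the same bound.
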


    \begin{proof}
    We first prove the following claim: Let \(R\) be a rooted tree with root \(r\) and \(m\) vertices. For every edge \(e\), let \(x_e\) be the size of the component containing \(r\) after deleting \(e\). If \(x_1\le x_2\le \cdots \le x_{m-1}\) is the increasing ordering of these numbers, then \(x_j\ge j\) for every \(1\le j\le m-1\).

    For each non-root vertex \(v\), let \(s(v)\) be the size of the rooted subtree with root \(v\). If \(e\) is the edge joining \(v\) to its parent, then $$x_e=m-s(v).$$ We show that for every positive integer \(q\), the number of non-root vertices \(v\) satisfying \(s(v)\ge q\) is at most \(m-q\). This is proved by induction on \(m\). Suppose the root \(r\) has child-subtrees of sizes \(m_1,\ldots,m_d\), where $$\sum_{i=1}^d m_i=m-1.$$ A branch with \(m_i<q\) contributes no such vertex. If \(m_i\ge q\), then the root of this branch contributes one such vertex, and by induction the remaining part contributes at most \(m_i-q\) such vertices. Hence, this branch contributes at most \(m_i-q+1\). If \(k\) branches have size at least \(q\), then
    \begin{align*}
    \#\{v\ne r:s(v)\ge q\}&\le \sum_{m_i\ge q}(m_i-q+1)=\sum_{m_i\ge q}m_i-k(q-1)\\
    &\le (m-1)-(q-1)=m-q.
    \end{align*}
    Thus, if \(s_1\ge s_2\ge \cdots \ge s_{m-1}\) is the decreasing ordering of the values \(s(v)\), then \(s_j\le m-j\). Since \(x_e=m-s(v)\), we get \(x_j=m-s_j\ge j\). 

    Now we prove the lemma by induction on \(|\tt|\). The case \(|\tt|=1\) is trivial. Choose a leaf \(z\) of \(\tt\), let \(r\) be its unique neighbor, and set \(\tt'=\tt-z\). Then \(\tt'\) has \(m=|\tt|-1\) vertices. Root \(\tt'\) at \(r\).

    For an edge \(e\in E(\tt')\), let \(x_e\) be the size of the component containing \(r\) after deleting \(e\) in \(\tt'\). The other component has size \(m-x_e\). Thus the contribution of \(e\) to \(F_\alpha(\tt')\) is \(x_e^\alpha+(m-x_e)^\alpha\). After adding \(z\) back, this contribution becomes \((x_e+1)^\alpha+(m-x_e)^\alpha\). Hence the increase caused by \(e\) is \((x_e+1)^\alpha-x_e^\alpha\). Since \(0<\alpha\le 1\), the function \(x^\alpha\) is concave, so \((x+1)^\alpha-x^\alpha\) is non-increasing in \(x\). By the claim, after ordering the \(x_e\)'s increasingly, we have \(x_j\ge j\). Therefore
    \begin{align*}
    \sum_{e\in E(\tt')}\left((x_e+1)^\alpha-x_e^\alpha\right)&\le \sum_{j=1}^{m-1}\left((j+1)^\alpha-j^\alpha\right)=m^\alpha-1.
    \end{align*}
    The new leaf edge contributes \(1+m^\alpha\). Hence $$F_\alpha(\tt)-F_\alpha(\tt')\le (m^\alpha-1)+(1+m^\alpha)=2m^\alpha.$$ By induction, $$F_\alpha(\tt')\le 2\sum_{i=1}^{m-1}i^\alpha.$$ Therefore $$F_\alpha(\tt)\le 2\sum_{i=1}^{m-1}i^\alpha+2m^\alpha=2\sum_{i=1}^{|\tt|-1}i^\alpha.$$ This proves the lemma.
    \end{proof}
Now we prove Lemma~\ref{lem:eigenvectorUpperbound}.
\begin{proof}[Proof of Lemma~\ref{lem:eigenvectorUpperbound}]
    We only need to prove for $\alpha\in\Ll(\max\Ll\{0,\frac{\gamma-\theta}{\gamma+\theta}\Rr\},1\Rr)$. Let $V(\tt)=|\tt|^{\alpha}$. Fix $n$, let 
    $$R_n(\tt)=\frac{h_n(\tt)}{V(\tt)},\qquad\tt_0=\arg \max_{\tt\in A_n}\{R_n(\tt)\}.$$
    The notation $\tt_0$ ignores the dependence on $n$ for simplicity, but the dependence should be remembered. Note that
    $$\Lb_n(\tt,\tt)<0,\qquad\Lb_n(\tt,\tt')\ge 0 ,\tt\neq \tt'.$$
    Then we use $\Lb_nh_n=\lab_nh_n$:
    \begin{align*}
        (\lab_n-\Lb_n(\tt_0,\tt_0))h_n(\tt_0)&=\sum_{\tt\neq\tt_0}\Lb_n(\tt_0,\tt)h_n(\tt)
        \le\sum_{t\neq\tt_0}\Lb_n(\tt_0,\tt)R_n(\tt_0)V(\tt)
    \end{align*}
    combining with $$h_n(\tt_0)=R_n(\tt_0)V(\tt_0),\qquad R_n(\tt_0)>0$$ to get
    $$(\lab_n-\Lb_n(\tt_0,\tt_0))V(\tt_0)\le\sum_{\tt\neq\tt_0}\Lb_n(\tt_0,\tt)V(\tt).$$
    That is,
    \begin{equation}\label{eq:labVlessLV}\lab_nV(\tt_0)\le\Lb_nV(\tt_0).\end{equation} 
    On the other hand, using Lemma~\ref{Lem:TreeEstimate}, we have:
    \begin{align*}
        \Lb_n V(\tt_0) &=\beta\Ind{|\tt_0|\le n}\sum_{v\in\tt_0}\sum_{\tt'\in\TT}p_{\tt'}\Ind{|\tt'|\le n}(V(\tt_0\oplus_v\tt')-V(\tt_0))\\
        &\qquad +\gamma\sum_{v\in\tt_0\setminus\{\rt\}}(V(\tt_v)+V(\tt_0\ominus\tt_v)-V(\tt_0))-\theta(|\tt_0|-1)V(\tt_0)\\       &\le\beta\sum_{v\in\tt_0}\sum_{\tt'\in\TT}p_{\tt'}|\tt'|^{\alpha}+2\gamma\sum_{i=0}^{|\tt_0|-1}i^{\alpha}-(\gamma+\theta)(|\tt_0|-1)|\tt_0|^{\alpha}\\
        &\le \Ll(\frac{2\gamma}{\alpha+1}-\theta-\gamma\Rr)|\tt_0|^{\alpha+1}+C'|\tt_0|.
    \end{align*}
    $C'>0$ is some constant independent of $n,|\tt|$. Using $\lab_n\ge-\beta$ in \eqref{eigenvaluebound} and \eqref{eq:labVlessLV}, we have
    $$-\beta V(\tt_0)\le \Lb_n V(\tt_0)\le\Ll(\frac{2\gamma}{\alpha+1}-\theta-\gamma\Rr)|\tt_0|^{\alpha+1}+C'|\tt_0|.$$
    Since $\alpha>\frac{\gamma-\theta}{\gamma+\theta}$, we have $\frac{2\gamma}{\alpha+1}-\theta-\gamma<0$. Thus, there exists a constant $M$ such that 
    $$|\tt_0|\le M,\forall n$$
    Thus, for all $n\in\N$ and $\tt\in\TT$, applying Lemma~\ref{lem:PointUpperOfh_n} gives
    $$h_n(\tt)\le R_n(\tt_0)V(\tt)=\frac{h_n(\tt_0)}{V(\tt_0)}V(\tt)\le\frac{C_{\tt_0}}{|\tt_0|^{\alpha}}V(\tt)\le \Ll(\sup_{|\tt'|\le M}\frac{C_{\tt'}}{|\tt'|^{\alpha}}\Rr)V(\tt).$$
    This completes the proof.
    \end{proof}

\subsubsection{Lower bound}
    In this part, we will give a uniform lower bound of $h_n$ in Lemma~\ref{lem:eigenvectorLowerbound}. We define 
    $$\tau^{(n)}:=\inf\limits\{ s \in \R_+ : \langle X_{s,n},\delta_{*}\rangle>0\}$$ 
    to be the first time an isolated vertex appears in $X_{s,n}$.
     We have the following estimate:
    \begin{lemma}
    \label{lem:IsolatedVertexAppear}
    When $\gamma>\theta$, for every $n, s\geq 0$, $\ \tt\in A_n$, we have: 
    $$\mathbb{P}_{\delta_{\tt}}(\tau^{(n)}\leq s)\geq \min \Ll\{y(s), 1-\frac{\theta}{\gamma}\Rr\}, $$
    where $y(s)=\frac{\gamma}{\gamma+\theta}\Ll(1-e^{-(\gamma+\theta)s}\Rr).$
    \end{lemma}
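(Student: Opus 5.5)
The plan is to exploit one fact: in the modified truncated process every \emph{edge} of an active cluster carries two competing clocks. One is fragmentation at rate $\gamma$. The other is detection at rate $\theta$, which isolates the whole cluster. A vertex never carries a clock, so a singleton $*$ is never isolated. If $|\tt|=1$ then $\tau^{(n)}=0$ and there is nothing to prove, so assume $|\tt|\ge 2$. The first event we look for is the fragmentation of a \emph{leaf edge}, since that produces $*$ immediately. Growth is switched off for clusters of size $>n$, and otherwise it only adds edges. Throughout, the argument will never use the growth clocks except to note that they do not destroy the mechanism.

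\textbf{Step 1 (short-time bound $y(s)$).} Fix a leaf $v$ of $\tt$ and its edge $e$. I will argue that with probability at least $y(s)$ the edge $e$ rings a fragmentation clock before time $s$ while its cluster is still active. The edge $e$ has its own exponential clocks of total rate $\gamma+\theta$. The probability that one of them rings by $s$ and that the first ring is a fragmentation is exactly $\frac{\gamma}{\gamma+\theta}(1-e^{-(\gamma+\theta)s})=y(s)$.

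The difficulty is that other edges in the same cluster may isolate it first. This is where the second quantity enters: the bound is a minimum, so it suffices to show that on the bad event (isolation through another edge) we can still fall back on the bound $1-\theta/\gamma$.

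\textbf{Step 2 (cluster-count walk).} Let $Z_s$ be the number of active clusters with at least two vertices. Every edge event is a fragmentation with probability $\frac{\gamma}{\gamma+\theta}$, independently of the sizes. A fragmentation inside such a cluster either creates a $*$ (success) or raises $Z$ by one. An isolation lowers $Z$ by one. Before success, the embedded chain of $Z$ along edge events therefore dominates a simple random walk started at $1$, with up-probability $\frac{\gamma}{\gamma+\theta}$ and down-probability $\frac{\theta}{\gamma+\theta}$. By gambler's ruin, this walk never hits $0$ with probability $1-\theta/\gamma$.

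On that event the family never dies out, so infinitely many fragmentations occur. Each cluster of size $\ge2$ has a leaf edge whose clocks are ringing at rate $\gamma+\theta$. Hence some leaf edge eventually fragments, i.e.\ $\tau^{(n)}<\infty$.

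\textbf{Step 3 (combining with the time constraint).} The crux is to turn Steps 1 and 2 into a single bound valid for every $s$, rather than an "eventually" statement. I would try a renewal/first-step argument on the clock of the fixed leaf edge $e$. If $e$ rings first and it is a fragmentation, we are done by time $s$ with the probability computed in Step 1. If instead another edge of the cluster rings first:
\begin{itemize}
\item a fragmentation splits off a new cluster, so we now have two independent attempts;
\item an isolation ends that attempt.
\end{itemize}

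Conditioning on the first edge event and using the strong Markov property, $u(s):=\inf_{\tt}\P_{\delta_\tt}(\tau^{(n)}\le s)$ should satisfy an inequality of the form
\[
u(s)\ \ge\ \int_0^s (\gamma+\theta)e^{-(\gamma+\theta)r}\Bigl[\tfrac{\gamma}{\gamma+\theta}\bigl(1-(1-u(s-r))^2\bigr)\Bigr]\,dr .
\]
Here the factor $1-(1-u)^2$ comes from the two independent clusters produced by a fragmentation. In the case where the rung fragmentation cuts the leaf edge itself, the success is immediate, which only helps.

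The final step is a comparison argument: the function $\min\{y(s),1-\theta/\gamma\}$ should be a subsolution of this inequality. The point is that $q\mapsto \frac{\gamma}{\gamma+\theta}(1-(1-q)^2)$ dominates $q$ exactly when $q\le 1-\theta/\gamma$, which is precisely where the constant $1-\theta/\gamma$ comes from. Carrying out this comparison for all $s$ uniformly in $n$ and $\tt$ is the main obstacle. In particular I expect the check against $y(s)$ on the growing part to need care, as does the rigorous justification that growth and truncation only add edges and never hurt the bound.
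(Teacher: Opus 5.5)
\textbf{There is a genuine gap, and it is in Step 3.} Step 3 is the only step that could give a bound at every finite $s$. Step 2 only yields $\P_{\delta_{\tt}}(\tau^{(n)}<\infty)\ge 1-\theta/\gamma$. Step 1 is not a lower bound for $\P_{\delta_{\tt}}(\tau^{(n)}\le s)$: besides isolation through other edges, a growth event at the fixed leaf $v$ turns $e$ into an internal edge.

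Your renewal inequality
\[
u(s)\ \ge\ \int_0^s \gamma e^{-(\gamma+\theta)r}\bigl(1-(1-u(s-r))^2\bigr)\,dr
\]
has lost the one term that drives the bound, namely the immediate success when a leaf edge fragments. The integrand is at most $2\gamma u(s-r)$, so $u\equiv 0$ satisfies the inequality. No comparison argument can extract a positive lower bound from it. In particular, $w(s):=\min\{y(s),1-\theta/\gamma\}$ is not a subsolution: as $s\to 0$, $w(s)\sim\gamma s$, while the right-hand side evaluated at $w$ is $O(s^2)$.

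The conditioning is also misformulated. The density $(\gamma+\theta)e^{-(\gamma+\theta)r}$ belongs to the clock on $e$ alone. The first event of the cluster is the first ring among all its clocks, at total rate
\[
\alpha=\beta|\tt|\Ind{|\tt|\le n}\sum_{|\tt'|\le n}p_{\tt'}+(|\tt|-1)(\gamma+\theta),
\]
and neither the growth events nor the other edges appear in your bracket.

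\textbf{The missing idea is a minimum principle over the finite set $A_n$ at each fixed time, which is how the paper proceeds.} Write the first-step equation for $q_s(\tt)=\P_{\delta_{\tt}}(\tau^{(n)}\le s)$ and differentiate in $s$. At a minimizer $\tt_0$ of $q_s(\cdot)$, with $q=q_s(\tt_0)\le 1-\theta/\gamma$:
\begin{itemize}
\item every growth term $\beta p_{\tt'}\bigl(q_s(\tt_0\oplus_v\tt')-q_s(\tt_0)\bigr)$ is nonnegative;
\item a leaf $v_0$ of the \emph{current} $\tt_0$ contributes $\gamma-(\gamma+\theta)q$;
\item every other non-root $v$ contributes at least $\gamma\bigl(1-(1-q)^2\bigr)-(\gamma+\theta)q=q(\gamma-\theta-\gamma q)\ge 0$.
\end{itemize}
The last point is your algebraic observation, but applied only to the non-leaf edges, while the leaf edge supplies the source term $\gamma$. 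One gets $\frac{d}{ds}q_s(\tt_0)\ge\gamma-(\gamma+\theta)q_s(\tt_0)$. This is the ODE solved by $y$ with $y(0)=0$, and a first-violation-time argument concludes (the paper uses $y-\epsilon e^{s}$ to make the inequality strict). Working with a leaf of the current minimizer, rather than tracking a fixed leaf, is also what makes growth harmless.

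If you prefer to keep an integral form, the correct inequality, assuming $q\ge w$ at all earlier times, is
\[
q_s(\tt)\ \ge\ \int_0^s e^{-\alpha r}\bigl[\gamma+(\alpha-\gamma-\theta)w(s-r)\bigr]\,dr .
\]
The function $w$ does satisfy the reverse comparison for every $\alpha\ge\gamma+\theta$, because $w'\le\gamma-(\gamma+\theta)w$ and $w(0)=0$. Closing the continuous-time induction still needs a perturbation as above. The point is that the term $\gamma$ must be present, and the rate $\alpha$ must be the total rate of the cluster.
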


    \begin{proof}
    We use the following shorthand notation in the proof
    $$q_s(\tt):=\mathbb{P}_{\tt}(\tau^{(n)}\leq s),$$
    where the dependence on $n$ is omitted. Let $y_{\epsilon}(s)=y(s)-\epsilon e^s$, then it suffices to fix $\epsilon$ and show that $q_s(\tt)\geq \min \Ll\{y_{\epsilon}(s), 1-\frac{\theta}{\gamma}\Rr\}$. 
    We prove it by contradiction. Let $$s_0:=\inf_s\Ll\{\exists \tt\in A_n \text{ s.t. } q_s(\tt)< y_{\epsilon}(s),\ q_s(\tt)< 1-\frac{\theta}{\gamma} \Rr\}.$$
    From $q_0(\tt)\geq 0>y_{\epsilon}(0)$ we imply that $s_0>0$. 
    Take 
    $$\tt_0=\arg\min\limits_{\{\tt\in A_n\}}q_{s_0}(\tt).$$
     Let
     $$\alpha=\Ind{|\tt_0|\le n}\beta|\tt_0|\sum_{|\tt'|\le n}p_{\tt'}+(|\tt_0|-1)(\gamma+\theta)$$ be the total rate of evolution for $\tt_0$, then we prove: 
    \begin{equation}
    q_s(\tt_0)%&=\int_0^s \alpha e^{-\alpha r}(\frac{\beta}{\alpha}\textbf{1}_{\{|\tt|\leq n\}}\sum_{v\in\tt}\sum_{\tt'\in\TT}p_{\tt'}q_{s-r}(\tt\oplus_v\tt')+\frac{\gamma}{\alpha}\sum_{v\in t\setminus\{\rt\}}Q(s-r, \tt,v))\\
    =e^{-\alpha s}\int_0^s e^{\alpha r}\left(\beta\textbf{1}_{\{|\tt_0|\leq n\}}\sum_{v\in\tt_0}\sum_{|\tt'|\le n}p_{\tt'}q_{r}(\tt_0\oplus_v\tt')+\gamma\sum_{v\in \tt_0\setminus\{\rt\}}Q_r(v)\right)dr
    \label{eq:DEofQ}
    \end{equation}
    where 
    $$Q_s(v)=1-(1-q_s(\tt_0\ominus\tt_v))(1-q_s(\tt_v)). $$
    This equation is derived by studying the first event of the truncated process $X_{s,n}$ starting from $\tt_0$:
    The probability density that the first event occurs at time $r$ is $\alpha e^{-\alpha r}$. When the first event occurs, the probability that the event is attaching $\tt'$ at $v\in\tt_0$ is $\frac{\beta p_{\tt'}}{\alpha}\Ind{|\tt_0|\le n,|\tt'|\le n}$. After that, the probability that the isolation point occurs in the rest $s-r$ time is $$q_{s-r}(\tt_0\oplus_v\tt').$$ The probability that the first event is fragmentation at the edge connecting $v$ and its parent is $\frac{\gamma}{\alpha}$. Then after fragmentation, the two remaining clusters $\tt_v$ and $\tt_0\ominus\tt_v$ evolve independently. The probability that an isolated vertex occurs in the rest $s-r$ time is just $$Q_{s-r}(v)=1-(1-q_{s-r}(\tt_0\ominus\tt_v))(1-q_{s-r}(\tt_v)).$$
    If the first event is isolation, it is impossible for an isolated vertex to occur. Thus, we have the following equation:
    $$q_{s}(\tt_{0})=\int_{0}^{s}\alpha e^{-\alpha r}\left(\sum_{v\in\tt_0}\sum_{|\tt'|\le n}\frac{\beta}{\alpha}p_{\tt'}\Ind{|\tt_0|\le n}q_{s-r}(\tt_0\oplus_v\tt')+\sum_{v\in\tt_0\setminus\{\rt\}}\frac{\gamma}{\alpha} Q_{s-r}(v)\right)dr.$$
    Then changing the variable $r\rightarrow s-r$ gives equation \eqref{eq:DEofQ}.
    Differentiating with respect to $s$: 
    \begin{align*}
    \frac{d}{ds}q_s(\tt_0)&=-\alpha q_s(\tt_0)+e^{-\alpha s}\cdot e^{\alpha s}\left(\beta\textbf{1}_{\{|\tt_0|\leq n\}}\sum_{v\in\tt_0}\sum_{|\tt'|\le n}p_{\tt'}q_{s}(\tt_0\oplus_v\tt')+\gamma\sum_{v\in \tt_0\setminus\{\rt\}}Q_{s}(v)\right)\\
    &=\beta\textbf{1}_{\{|\tt_0|\leq n\}}\sum_{v\in\tt_0}\sum_{|\tt'|\le n}p_{\tt'}(q_{s}(\tt_0\oplus_v\tt')-q_s(\tt_0))+\sum_{v\in \tt_0\setminus\{\rt\}}\left(\gamma Q_{s}(v)-\gamma q_{s}(\tt_0)-\theta q_{s}(\tt_0)\right). 
    \end{align*}
    Then we use $q_{s_0}(\tt_0)\le q_{s_0}(\tt),\forall\tt\in A_n$ to derive:
    \begin{align*}
    \frac{d}{ds}\bigg|_{s=s_0}q_{s}(\tt_0)&\geq \sum_{v\in \tt_0\setminus\{\rt\}}(\gamma Q_{s_0}(v)-\gamma q_{s_0}(\tt_0)-\theta q_{s_0}(\tt_0))
    \end{align*}
    Since $\tt_0=\arg\min\limits_{\{\tt\in A_n\}}q_{s_0}(\tt)$, from the definition of $q$ we know that $\tt_0$ is not an isolated vertex. We take any leaf $v_0$ of $\tt_0$ and use $Q_{s}(v_0)=1$ to derive:
    \begin{align*}
    \frac{d}{ds}\bigg|_{s=s_0}q_{s}(\tt_0)&\geq \gamma-(\gamma+\theta)q_{s_0}(\tt_0)+\sum_{v\in\tt_0\setminus\{\rt,v_0\}}(\gamma Q_{s_0}(v)-\gamma q_{s_0}(\tt_0)-\theta q_{s_0}(\tt_0))
    \end{align*}
    Then we use $Q_{s_0}(v)=1-(1-q_{s_0}(\tt_0\ominus\tt_v))(1-q_{s_0}(\tt_v))\geq1-(1-q_{s_0}(\tt_0))^2$ to derive:
    \begin{align*}
    \frac{d}{ds}\bigg|_{s=s_0}q_{s}(\tt_0)&\geq \gamma-(\gamma+\theta)q_{s_0}(\tt_0)+\sum_{v\in\tt_0\setminus\{\rt,v_0\}}\left(\gamma(1-(1-q_{s_0}(\tt_0))^2)-(\gamma+\theta)q_{s_0}(\tt_0)\right)
    \end{align*}
    Since $q_{s_0}(\tt_0)\le 1-\frac{\theta}{\gamma}$, we have $\gamma(1-(1-q_{s_0}(\tt_0))^2)-(\gamma+\theta)q_{s_0}(\tt_0)\ge 0$. Since $q_{s}(\tt_0)$ is continuous, by the contradiction hypothesis we have $q_{s_0}(\tt_0)\le y_{\epsilon}(s_0)$. Thus:
    \begin{align*}
    \frac{d}{ds}\bigg|_{s=s_0}q_{s}(\tt_0)\geq \gamma-(\gamma+\theta)q_{s_0}(\tt_0)\ge\gamma-(\gamma+\theta)y_{\epsilon}(s_0)>y_{\epsilon}'(s_0). 
    \end{align*}
    So, there exists $0<s'<s_0$ such that $q_{s'}(\tt_0)<y_{\epsilon}(s')$. Note that $q_s(\tt_0)$ is increasing with respect to $s$, hence $q_{s'}(\tt_0)<1-\frac{\theta}{\gamma}$. These contradict the assumption. 
    \end{proof}

    Now we prove Lemma~\ref{lem:eigenvectorLowerbound}.
    \begin{proof}[Proof of Lemma~\ref{lem:eigenvectorLowerbound}]
    
    For any $\tt\in A_n$, using $\lab_n\le M$ in \eqref{eigenvaluebound}, we have
    \begin{align*}
        &\quad h_n(\tt)=e^{-\lab_n}M_{1,n}h_n(\tt)\ge e^{-M}M_{1,n}(\tt,*)h_n(*)\\
        &\geq e^{-M}h_n(*)\mathbb{P}_{\tt}(\tau^{(n)}\leq1,\text{ the isolated vertex stays unchanged in the remaining time})\\
        &\ge e^{-M}e^{-\beta}\P(\tau^{(n)}\le 1).   
    \end{align*}
    When $\gamma>\theta$, Lemma~\ref{lem:IsolatedVertexAppear} above gives us the uniform lower bound. When $\gamma\le\theta$, the probability that an isolated vertex occurs before $1$ is obviously lower bounded by the probability that the first event happens before $1$ and the event is fragmentation on an edge between a leaf and its parent, which is lower bounded by $\frac{c}{|\tt|}$ for some $c>0$. Hence the proof is completed. 
    \end{proof}

\subsection{Limit of truncated eigenelements}
In this section, we prove the Proposition~\ref{eigenelements}. From \eqref{eigenvaluebound} we know that $\lab_n$ converges to some limit $\lab$ up to a subsequence. From Lemma~\ref{lem:eigenvectorLowerbound} and Lemma~\ref{lem:eigenvectorUpperbound} we know that $h_n$ converges pointwise to some positive limit $\tilde{h}\in\R_{+}^{\TT}$ up to a subsequence. From \eqref{uniformBoundOfPi_n} and Prokhorov's theorem we know that $\{\pi_n\}$ as a sequence of probability measures on $\TT$ converges weakly to a  probability measure $\pi$ up to a subsequence. Up to the subsequence of a subsequence, with a little abuse of notation, we can further assume that
$$\lim_{n\rightarrow\infty}\pi_n=\pi,\qquad\lim_{n\rightarrow\infty}h_n=\tilde{h},\qquad\lim_{n\rightarrow\infty}\lab_n=\lab.$$
The first two convergences hold separately in the weak and pointwise sense.

    The following lemma characterizes the limit of the left eigenvector.
    \begin{lemma}\label{lefteigenvector}
        The limiting distribution $\pi$ satisfies
        \begin{align}
        \label{eq:PropertyOfPi}
            \Lb^*\pi=\lab\pi,\qquad \langle\pi,|\cdot|^2\rangle<+\infty,
        \end{align}
        and $\pi(\tt) > 0$ for all $\tt \in \TT$.
    \end{lemma}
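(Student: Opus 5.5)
The moment bound comes first, and it is immediate. Since $\pi_n\to\pi$ weakly on the countable discrete space $\TT$, we have $\pi_n(\tt)\to\pi(\tt)$ for every $\tt$. Fatou's lemma together with \eqref{uniformBoundOfPi_n} then gives
\begin{align*}
\langle\pi,|\cdot|^2\rangle\le\liminf_{n\to\infty}\langle\pi_n,|\cdot|^2\rangle\le C .
\end{align*}

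For the eigen-equation, $\Lb^*\pi=\lab\pi$ means that for every fixed $\tt'\in\TT$,
\begin{align*}
\sum_{\tt}\pi(\tt)\Lb(\tt,\tt')=\lab\pi(\tt').
\end{align*}
Equivalently, $\langle\pi,\Lb\Ind{\tt'}\rangle=\lab\langle\pi,\Ind{\tt'}\rangle$. By Lemma~\ref{lem:A_nLarge}, for $n$ large $\tt'$ lies in $A_n$, and every $\tt$ with $\Lb_n(\tt,\tt')>0$ lies in $A_n$ as well. So the identity $\langle\pi_n,\Lb_n\Ind{\tt'}\rangle=\lab_n\pi_n(\tt')$ holds.

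We now compare $\Lb_n\Ind{\tt'}$ with $\Lb\Ind{\tt'}$, using the function $g:=\Lb\Ind{\tt'}$. Its fragmentation and growth parts are supported on the finitely many trees of size at most $2|\tt'|$, because a tree can only reach $\tt'$ by fragmentation if it has size larger than $|\tt'|$ while producing a piece equal to $\tt'$. The negative diagonal part is $-(\text{total rate})\Ind{\tt'}$. Hence $g$ has finite support, and for $n\ge|\tt'|$ we have $\Lb_n\Ind{\tt'}(\tt)=g(\tt)$ except at $\tt=\tt'$. There the truncated diagonal drops the growth rate of trees with $|\tt''|>n$, so the difference is
\begin{align*}
\beta|\tt'|\sum_{|\tt''|>n}p_{\tt''}\longrightarrow 0 .
\end{align*}
Pointwise convergence of $\pi_n$ on this finite support then gives $\langle\pi_n,\Lb_n\Ind{\tt'}\rangle\to\langle\pi,g\rangle$. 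Also $\lab_n\pi_n(\tt')\to\lab\pi(\tt')$. This yields the equation. No uniform integrability is needed at this stage, because $\Ind{\tt'}$ only interacts with finitely many trees. One should still record that $\langle\pi,\Lb f\rangle=\langle\Lb^*\pi,f\rangle$ holds for $f\in\B(|\cdot|)$, by Fubini and the second moment of $\pi$ together with $\mathbf{m}_1<\infty$.

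Positivity is the delicate step. First, $\pi\neq0$: weak convergence to a probability measure follows from tightness, and tightness holds because the second moments are uniformly bounded. Next, write the equation at $\tt'$ as
\begin{align*}
(\lab-\Lb(\tt',\tt'))\pi(\tt')=\sum_{\tt\neq\tt'}\pi(\tt)\Lb(\tt,\tt') .
\end{align*}
The factor $\lab-\Lb(\tt',\tt')$ is finite and positive: $-\Lb(\tt',\tt')\ge0$, and $\lab\ge-\beta$, so after adding the diagonal rate the factor stays positive whenever $\tt'\neq *$ or $\beta>0$. I will check this sign carefully; if needed I would instead use the semigroup relation $\pi M_{s,n}=e^{\lab_n s}\pi_n$ passed to the limit. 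It follows that $\pi(\tt')>0$ whenever some $\tt$ with $\pi(\tt)>0$ has $\Lb(\tt,\tt')>0$.

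Two propagation steps then finish the argument. Every tree reaches $*$ by repeated leaf fragmentation, each step with positive rate, so some $\pi(\tt)>0$ forces $\pi(*)>0$. From $*$, the chain $\tt_i\to\tt_{i+1/2}\to\tt_{i+1}$ of Lemma~\ref{lem:A_nLarge} reaches any $\tt$ through positive rates of $\Lb$. Hence $\pi>0$ everywhere. The main obstacle I expect is justifying the passage to the limit carefully, in particular the diagonal growth term and the sign of $\lab-\Lb(\tt',\tt')$. If positivity via the generator is awkward, the fallback is to pass to the limit in $\pi_nM_{s,n}=e^{\lab_ns}\pi_n$ and use Corollary~\ref{Cor:M_{t,n}LowerBound}.
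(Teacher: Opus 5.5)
The genuine gap is in the passage to the limit for $\Lb^*\pi=\lab\pi$. Your Fatou bound for the second moment and your positivity propagation match the paper. Your worry about the sign of $\lab-\Lb(\tt',\tt')$ is moot: if $\pi(\tt_0)>0$ and $\Lb(\tt_0,\tt')>0$, the right-hand side of $(\lab-\Lb(\tt',\tt'))\pi(\tt')=\sum_{\tt\neq\tt'}\pi(\tt)\Lb(\tt,\tt')$ is strictly positive. Together with $\pi(\tt')\ge 0$ this forces $\pi(\tt')>0$ whatever the prefactor, so no semigroup fallback is needed.

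\textbf{The false support claim.} You claim that $g=\Lb\Ind{\tt'}$ is supported on trees of size at most $2|\tt'|$, but this is false. Fragmentation produces $\tt'$ from trees of arbitrary size: from any $\tt$ in which $\tt'$ appears as a pendant subtree $\tt_v$, or as the remainder $\tt\ominus\tt_v$. The reasoning you give only yields the lower bound $|\tt|>|\tt'|$. For example, every tree with $|\tt|\ge 2$ has a leaf, so $\Lb\Ind{*}(\tt)\ge\gamma$ for all of them, and $\Lb\Ind{*}(\tt)=\gamma(|\tt|-1)$ when $\tt$ is a star. So $g$ has infinite support and grows linearly in $|\tt|$. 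Pointwise convergence $\pi_n(\tt)\to\pi(\tt)$ then does not by itself give $\langle\pi_n,\Lb_n\Ind{\tt'}\rangle\to\langle\pi,g\rangle$. Mass of $\pi_n$ on large trees with many leaves could contribute a non-vanishing amount that disappears in the limit. Your remark that ``no uniform integrability is needed'' is therefore backwards: uniform integrability is the one substantive point of this step.

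\textbf{The repair.} It is the tail estimate the paper uses, and it needs only ingredients you already have. For $n\ge|\tt'|$, $\Lb_n\Ind{\tt'}$ and $g$ agree off $\tt'$. For $K\ge|\tt'|$ and $|\tt|>K$ the growth part of $g(\tt)$ vanishes, so $0\le g(\tt)\le 2\gamma|\tt|$ there. The uniform bound \eqref{uniformBoundOfPi_n}, together with Fatou for $\pi$, then gives, uniformly in $n$,
\[
\sum_{|\tt|>K}\bigl(\pi_n(\tt)+\pi(\tt)\bigr)g(\tt)\le\frac{2\gamma}{K}\bigl(\langle\pi_n,|\cdot|^2\rangle+\langle\pi,|\cdot|^2\rangle\bigr)\le\frac{4\gamma C}{K}.
\]
On the finite set $\{|\tt|\le K\}$, your pointwise convergence applies, as does your diagonal correction $\beta|\tt'|\sum_{|\tt''|>n}p_{\tt''}\to 0$. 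Let $n\to\infty$, then $K\to\infty$. The paper does the same for a general finitely supported $f$, using $|\Lb_n f|,|\Lb f|\le C_f|\cdot|$; testing only against the indicators $\Ind{\tt'}$, as you do, is equivalent by linearity.
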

    \begin{proof}
         By Fatou's lemma, we have     $$\langle\pi,|\cdot|^2\rangle\leq\liminf_{n\rightarrow\infty}\langle\pi_n,|\cdot|^2\rangle\leq C.$$ 
        Thus, $\pi$ has the second moment. To prove $\Lb^*\pi=\lab\pi$, we only need to prove that for every finite supported test function $f$, we have $\langle\Lb^*\pi,f\rangle=\langle\lab\pi,f\rangle$, i.e. $\langle\pi,\Lb f\rangle=\langle\lab\pi,f\rangle$. We have
        \begin{align}
        \label{eq:DifferenceOfPi}
            |\langle\lab\pi,f\rangle - \langle\pi,\Lb f\rangle|\leq\limsup_{n\rightarrow\infty}\left(\underbrace{|\langle\lab\pi,f\rangle-\langle\lab_n\pi_n,f\rangle|}_{\mathbf{I}} + \underbrace{|\langle\pi_n,\Lb_nf\rangle-\langle\pi,\Lb f\rangle|}_{\mathbf{II}} \right).   
        \end{align}
        Since $f$ is bounded, $\langle\pi_n,f\rangle\xrightarrow{n\rightarrow\infty}\langle\pi,f\rangle$. Combining with $\lab_n\rightarrow\lab$, we know that $\mathbf{I}$ in \eqref{eq:DifferenceOfPi} goes to $0$. 
        
        Now we prove that $\mathbf{II}$ in \eqref{eq:DifferenceOfPi} goes to $0$. We fix $K$, and from Lemma~\ref{lem:A_nLarge} we can take $n$ large enough such that the support of $f$ is contained in $A_n$, and $\TT_K\subset A_n$. Then
        \begin{align*}
        \mathbf{II}&\le\underbrace{\sum_{|\tt|\le K}|\pi_n(\tt)\Lb_nf(\tt)-\pi(\tt)\Lb f(\tt)|}_{\mathbf{II}.1}+\underbrace{\sum_{|\tt|>K}|\pi_n(\tt)\Lb_nf(\tt)-\pi(\tt)\Lb f(\tt)|}_{\mathbf{II}.2}.
        \end{align*}
        
        We first look at $\mathbf{II}.1$. For $\tt\in A_n$,  using $|f|\le M$, we have
        $$|\Lb f(\tt)-\Lb_nf(\tt)|\le\beta\sum_{v\in\tt}\sum_{|\tt'|> n}p_{\tt'}|f(\tt\oplus_v\tt')-f(\tt)|\le2\beta M\sum_{v\in\tt}\sum_{|\tt'|>n}p_{\tt'}\rightarrow 0.$$
        Combining with $\pi_n(\tt)\xrightarrow{n \to \infty}\pi(\tt)$, we derive that $\mathbf{II}.1$ goes to $0$ as $n$ goes to infinity, since $K$ is fixed and $\mathbf{II}.1$ is finite sum. 
        
        Now we study $\mathbf{II}.2$. Since f is bounded, it is easy to verify that $\max\Ll\{\Ll|\Lb_nf\Rr|,\Ll|\Lb f\Rr|\Rr\}\leq C_f |\cdot|$ for some constant $C_f$. Then
        \begin{align*}
            \sum_{|\tt|>K}|\pi_n(\tt)\Lb_nf(\tt)-\pi(\tt)\Lb f(\tt)| &\le\sum_{|\tt|>K}C_{f}(\pi_n(\tt)+\pi(\tt))|\tt|\\
            &\le\frac{C_f}{K}\sum_{\tt \in \TT}\Ll(\pi_n(\tt)+\pi(\tt)\Rr)|\tt|^2\\
            &\le\frac{2C_fC}{K}.
        \end{align*}
        We let $n\rightarrow\infty$ and derive that
        $$\limsup_{n\rightarrow\infty}|\langle\pi_n,\Lb_nf\rangle-\langle\pi,\Lb f\rangle|\le\frac{2C_fC}{K}.$$ 
        Finally, we let $K\rightarrow\infty$ and derive \eqref{eq:PropertyOfPi}. 
        
        Now we prove that $\pi$ is positive.
    Assume $\pi(\tt_0)>0$. View $\Lb$ as an infinite matrix on $\TT$, denote the $(\tt,\tt')$ element of $\Lb$ by $\Lb(\tt,\tt')$. Then for every $\tt\in\TT,\tt'\in\TT,\tt\neq\tt'$, we have
     $$\Lb(\tt,\tt)<0,\qquad\Lb(\tt,\tt')\ge0.$$
     Then, we have $$(\lab+|\Lb(\tt,\tt)|)\pi(\tt)=\sum_{\tt'\neq\tt}\Lb(\tt',\tt)\pi(\tt')\ge\Lb(\tt_0,\tt)\pi(\tt_0).$$
    Thus, we know that as long as $\pi(\tt_0)>0, \Lb(\tt_0,\tt)>0$ for some $\tt,\tt_0\in\TT$,  we can derive $\pi(\tt)>0$. Then, the conclusion is derived since we can prove that $\Lb$ is irreducible using an argument similar to that of Lemma~\ref{lem:A_nLarge}.
    \end{proof}

    The following lemma characterizes the limit of the right eigenvector.
    \begin{lemma}\label{righteigenvector}
        For every $\alpha>\max\{0,\frac{\gamma-\theta}{\gamma+\theta}\}$, there exist $c,C>0$ such that:
        $$\tilde{h}(*)=1,\qquad\frac{c}{|\tt|}\le \tilde{h}(\tt)\le C|\tt|^{\alpha},\qquad\Lb \tilde{h}= \lab\tilde{h}.$$
        When $\gamma>\theta$, we have $$\inf_{\tt\in\TT}\tilde{h}(\tt)>0.$$

    \end{lemma}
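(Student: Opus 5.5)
The bounds on $\tilde h$ come directly from the uniform estimates on $h_n$. The eigen-equation comes from passing to the limit in $\Lb_n h_n=\lab_n h_n$ evaluated at a fixed tree, using dominated convergence.

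\textbf{Bounds.} Since $h_n(*)=1$ for every $n$, we get $\tilde h(*)=1$. For fixed $\tt$, Lemma~\ref{lem:A_nLarge} gives $\tt\in A_n$ for all large $n$. Then Lemma~\ref{lem:eigenvectorUpperbound} and Lemma~\ref{lem:eigenvectorLowerbound} give
\[
\frac{c}{|\tt|}\le h_n(\tt)\le C|\tt|^\alpha
\]
with $c,C$ independent of $n$. Passing to the pointwise limit along the subsequence yields the same bounds for $\tilde h$. When $\gamma>\theta$, the uniform bound $h_n\ge c$ passes to the limit in the same way, so $\inf_{\tt}\tilde h(\tt)\ge c>0$. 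We may take $\alpha\le 1$, since for larger $\alpha$ the bound only weakens.

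\textbf{Eigen-equation.} Fix $\tt$ and take $n\ge |\tt|$ large enough that $\tt\in A_n$. Every tree reachable from $\tt$ in one step of the truncated dynamics lies in $A_n$, so the equation $\Lb_n h_n(\tt)=\lab_n h_n(\tt)$ reads
\[
\beta\sum_{v\in\tt}\sum_{|\tt'|\le n}p_{\tt'}\bigl(h_n(\tt\oplus_v\tt')-h_n(\tt)\bigr)+\gamma\sum_{v\in\tt\setminus\{\rt\}}\bigl(h_n(\tt_v)+h_n(\tt\ominus\tt_v)-h_n(\tt)\bigr)-\theta(|\tt|-1)h_n(\tt)=\lab_n h_n(\tt).
\]
The fragmentation and isolation terms are finite sums over fixed trees, so they converge to the corresponding terms with $\tilde h$. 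The right-hand side converges to $\lab\tilde h(\tt)$.

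The only delicate term is the growth term, which is an infinite sum over $\tt'$ (here $\Ind{|\tt|\le n}=1$). Each summand converges pointwise: for fixed $\tt'$, eventually $|\tt'|\le n$, and $h_n(\tt\oplus_v\tt')\to\tilde h(\tt\oplus_v\tt')$. For domination, use the uniform upper bound
\[
h_n(\tt\oplus_v\tt')\le C(|\tt|+|\tt'|)^\alpha\le C(|\tt|+|\tt'|),
\]
which is summable against $p_{\tt'}$ because $\mathbf m_1\le\mathbf m_2<\infty$ by Assumption~\ref{assum}. One point needs checking: this upper bound must hold for trees in $A_n$, and $\tt\oplus_v\tt'$ with $|\tt'|\le n$ does lie in $A_n$, since it is one growth step from $\tt\in A_n$. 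Terms with $|\tt'|>n$ are absent, which is consistent with the limit since their summands are indicator-truncated. Dominated convergence then gives $\Lb\tilde h(\tt)=\lab\tilde h(\tt)$, and the sum defining $\Lb\tilde h$ is absolutely convergent by the same bound.

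The main obstacle is exactly this interchange of limit and infinite sum in the growth term. It is resolved by the polynomial upper bound of Lemma~\ref{lem:eigenvectorUpperbound} being uniform in $n$, combined with the moment condition on $p$.
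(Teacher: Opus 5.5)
Your proposal is correct and follows the paper's argument. The bounds come from the $n$-uniform estimates on $h_n$, and you rightly make explicit, via Lemma~\ref{lem:A_nLarge}, that $\tt\in A_n$ for large $n$. The eigen-equation comes from passing to the limit term by term in $\Lb_n h_n(\tt)=\lab_n h_n(\tt)$, with the growth term controlled by the uniform bound $h_n\le C|\cdot|^\alpha$ and the moment assumption. The paper handles the growth term by an explicit split at $|\tt'|\le K$ with a tail bound of order $\mathbf{m}_2K^{\alpha-2}$, which is just a hands-on version of your dominated-convergence step.
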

    
    \begin{proof}
        The estimates of $\tilde{h}$ are direct from Lemma~\ref{lem:eigenvectorLowerbound} and Lemma~\ref{lem:eigenvectorUpperbound}. We only need to prove $\Lb \tilde{h}=\lab \tilde{h}$. Fix $\tt$, since
        $$h_n(\tt)\xrightarrow{n\rightarrow\infty} \tilde{h}(\tt),\qquad\lab_n\xrightarrow{n\rightarrow\infty}\lab,\qquad\Lb_nh_n(\tt)=\lab_nh_n(\tt)$$
        we only need to prove $\Lb_nh_n(\tt)\rightarrow\Lb \tilde{h}(\tt)$. The convergence of the fragmentation term and the isolation term is a direct result of pointwise convergence since the terms only include subtrees of $\tt$. Regarding the growth term, we only need to prove that for any $v\in\tt$ we have
        $$\sum_{|\tt'|\le n}p_{\tt'}h_n(\tt\oplus_v\tt')\xrightarrow{n\rightarrow\infty}\sum_{\tt'}p_{\tt'}\tilde{h}(\tt\oplus_v\tt').$$
        Fix $K>|\tt|$, we use pointwise convergence to derive
        $$\sum_{|\tt'|\le K}p_{\tt'}h_n(\tt\oplus_v\tt')\xrightarrow{n\rightarrow\infty}\sum_{|\tt'|\le K}p_{\tt'}\tilde{h}(\tt\oplus_v\tt').$$
        Besides,
        \begin{multline*}
            \quad\Ll|\sum_{K<|\tt'|\le n}p_{\tt'}h_n(\tt\oplus_v\tt')-\sum_{|\tt'|>K}p_{\tt'}\tilde{h}(\tt\oplus_v\tt')\Rr|\\
            \le C\sum_{|\tt'|>K}p_{\tt'}(|\tt|+|\tt'|)^{\alpha}\le 2^{\alpha}C\sum_{|\tt'|>K}p_{\tt'}|\tt'|^{\alpha}
            \le\frac{2^{\alpha}C}{K^{2-\alpha}}\mathbf{m}_2
        \end{multline*}
        We complete the proof by first letting $n\rightarrow\infty$ and then $K\rightarrow\infty$.
    \end{proof}
    In the end, we prove Proposition \ref{eigenelements}.
    \begin{proof}[Proof of Proposition \ref{eigenelements}]
        From Lemma~\ref{lefteigenvector} and Lemma~\ref{righteigenvector} we have derived $\pi,\lab$ and $\tilde{h}$ satisfying
        $$\Lb \tilde{h}=\lab \tilde{h},\qquad\Lb^*\pi=\lab\pi,\qquad\langle\pi,|\cdot|^2\rangle<+\infty.$$
        Since $\tilde{h}(\tt)\le C|\tt|^{\alpha}$, we know that $\langle\pi,\tilde{h}\rangle<+\infty$. Thus, we set
        $$h=\frac{\tilde{h}}{\langle\pi,\tilde{h}\rangle}.$$ Then $\langle\pi,h\rangle=1$. The estimates of $h,\pi$ are direct results of Lemma~\ref{righteigenvector} and Lemma~\ref{lefteigenvector}. The only thing left to prove is the sign of $\lab$. This is direct from
        $$\lab=\langle\Lb^*\pi,1\rangle=\langle\pi,\Lb1\rangle=\langle\pi,(\gamma-\theta)(|\cdot|-1)\rangle=(\gamma-\theta)\langle\pi,|\cdot|-1\rangle$$
        and $\langle\pi,|\cdot|-1\rangle>0$ since $\pi$ is positive.
     \end{proof}

\section{Existence and uniqueness of Perron's root}

In Proposition~\ref{eigenelements} we have shown the existence of eigenelements of $\Lb$. In this section, we consider the original model instead of the modified model. Since $\L=\Lb-\theta I$, we have
$$\L h=\lambda h,\qquad\L^*\pi=\lambda\pi,\qquad\lambda=\lab-\theta .$$
Formally, we can define the first moment semigroup $(M_s)_{s\ge0} $ by 
$M_s=\exp(s\L), $ which satisfies
$$M_sh=e^{\lambda s}h,\qquad\pi M_s=e^{\lambda s}\pi.$$
This section aims to define the semigroup rigorously and prove the Perron-Frobenius Theorem.

\subsection{Well-definedness of the first moment semigroup}
In this subsection, we define $M_t$ as a bounded semigroup on $\B(|\cdot|^2)$ .  For a nonnegative function $f$, we define 
$$M_sf(\tt):=\mathbb{E}_{\delta_{\tt}}\Ll[\langle X_s, f\rangle\Rr], \qquad s \ge 0,\; \tt \in \TT. $$
Here, $\mathbb{P}_{\delta_{\tt}}$ denotes the law of the process starting from $X_0=\delta_{\tt}$ and $\mathbb{E}_{\delta_{\tt}}$ is the expectation associated with it. Then we extend the definition from the positive cone of $\B(|\cdot|^2)$ to the entire space, and show that $\L$ actually generates the semigroup $(M_s)_{s\geq 0}$. 

\begin{lemma}\label{lem.well-definedsemigroup}
    \begin{enumerate}
        \item For every $1\leq q\leq 2, s\geq 0, \tt\in\TT$, we have
        $$M_s(|\cdot|^q)(\tt)\leq e^{2^{q-1}\beta q \mathbf{m}_{q}s}|\tt|^q. $$
        \item For all
        $s\ge 0,\ $ $M_s$ is a well-defined bounded operator from $\B(|\cdot|^2)$ to $\B(|\cdot|^2)$.
        \item $(M_s)_{s\geq 0}$ is a weakly continuous positive semigroup on $\mathcal{B}(|\cdot|^2)$, and for every $f\in\mathcal{B}(|\cdot|)$, we have
            \begin{align}
            \label{eq:LgeneratesM_t}
                \frac{d}{ds}M_sf(\tt)=M_s(\L f)(\tt). 
            \end{align}
    \end{enumerate}
\end{lemma}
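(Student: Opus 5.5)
Part 1 comes first, via a Lyapunov (Gronwall-type) bound for the functions $V_q:=|\cdot|^q$, after a localization that makes the computation rigorous. The process is a nonexplosive Markov jump process on $\mathcal{M}_F(\TT)$. I will introduce the stopping times $T_K:=\inf\{s:\langle X_s,|\cdot|\rangle> K\}$. Up to $T_K$ the process lives on a finite set of forests with bounded rates, so Dynkin's formula holds for the additive functional $\langle X_{s\wedge T_K},V_q\rangle$. Its compensator has drift $\langle X_s,\L V_q\rangle$, where $\L$ is the first-moment generator \eqref{eq.generatorGFI}. The key estimate is then
\[
\L V_q(\tt)\le \beta\sum_{v\in\tt}\sum_{\tt'}p_{\tt'}\big((|\tt|+|\tt'|)^q-|\tt|^q\big)\le 2^{q-1}\beta q\,\mathbf m_q |\tt|^q .
\]
Here the fragmentation term is nonpositive because $a^q+b^q\le (a+b)^q$ for $q\ge1$, and the isolation term is nonpositive. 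For the growth term I use the mean value bound $(a+b)^q-a^q\le q b(a+b)^{q-1}\le q2^{q-2}b(a^{q-1}+b^{q-1})$ for $1\le q\le 2$. Combined with $b\le b^q$ (valid since $b\ge1$) and $a\ge1$, this controls $|\tt|\cdot\sum p_{\tt'}(\cdot)$ by $2^{q-1}q\mathbf m_q|\tt|^q$; this is where $\mathbf m_2<\infty$ from Assumption~\ref{assum} enters. Gronwall then gives $\E[\langle X_{s\wedge T_K},V_q\rangle]\le e^{cs}|\tt|^q$. Letting $K\to\infty$ by Fatou, using $T_K\to\infty$ a.s. (nonexplosion follows from the $q=1$ bound itself, since the total size has finite mean), yields item 1.

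For item 2, I define $M_sf:=M_sf^+-M_sf^-$ for $f\in\B(|\cdot|^2)$. Since $|f|\le\|f\|_{\B(|\cdot|^2)}V_2$, item 1 with $q=2$ gives $\|M_sf\|_{\B(|\cdot|^2)}\le e^{4\beta\mathbf m_2 s}\|f\|$, together with linearity and positivity. The semigroup property follows from the Markov and branching properties: independent trees evolve independently, so $\E_{\delta_\tt}[\langle X_{s+r},f\rangle]=\E_{\delta_\tt}[\langle X_s,M_rf\rangle]$. This is first checked for $f\ge0$ by the Markov property and additivity, and then extended to general $f$ by linearity, which is justified by integrability from item 1.

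For item 3, I apply Dynkin/localization again, now for $f\in\B(|\cdot|)$. One has $|\L f|\le C|\cdot|^2$: the growth term is bounded by $\beta|\tt|\sum p_{\tt'}C(|\tt|+|\tt'|)$, and the fragmentation and isolation terms are bounded by $C|\tt|^2$. So $\langle X_{s\wedge T_K},f\rangle-\int_0^{s\wedge T_K}\langle X_r,\L f\rangle dr$ is a martingale. Dominated convergence, with the integrable dominating bounds $\sup_{r\le s}\langle X_r,|\cdot|\rangle$ and $\int_0^s\langle X_r,|\cdot|^2\rangle dr$, gives $M_sf(\tt)=f(\tt)+\int_0^sM_r\L f(\tt)dr$. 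Continuity of $r\mapsto M_r\L f(\tt)$ then gives \eqref{eq:LgeneratesM_t}. That continuity, and more generally weak continuity for $f\in\B(|\cdot|^2)$, follows from right-continuity of paths plus domination.

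The main obstacle is the domination. Bounding $\E\sup_{r\le s}\langle X_r,|\cdot|^2\rangle$ directly is awkward. I would avoid it by using the monotonicity of total size (it only grows through growth events), so that $\sup_{r\le s}\langle X_r,|\cdot|\rangle$ is dominated by the total mass ever born, whose expectation item 1 controls. For the square I would instead use $\langle X_r,|\cdot|^2\rangle\le\langle X_r,|\cdot|\rangle^2$ only in a time-integrated form via Fubini and item 1, or, if needed, approximate $f\in\B(|\cdot|^2)$ by truncations $f\mathbf 1_{|\cdot|\le K}$ and pass to the limit using the uniform bound of item 1.
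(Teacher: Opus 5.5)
The gap is in item~3, at exactly the point you flag as the main obstacle, and neither of your workarounds closes it. Your localization argument does give the integrated identity $M_sf(\tt)=f(\tt)+\int_0^sM_r(\L f)(\tt)\,dr$ for $f\in\B(|\cdot|)$. The left side is dominated by the total mass born, and the right side is handled by Tonelli and item~1 with $q=2$. But this only yields \eqref{eq:LgeneratesM_t} for a.e.\ $s$. To get it for every $s$, and to get weak continuity on $\B(|\cdot|^2)$ at all, you need $r\mapsto M_rg(\tt)$ to be continuous for $g$ genuinely of order $|\cdot|^2$; for instance $\L|\cdot|=\beta\mathbf{m}_1|\cdot|-\theta|\cdot|^2$. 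That is, you need uniform integrability of $\langle X_{s_n},|\cdot|^2\rangle$ along $s_n\to s_0$.
\begin{itemize}
\item Item~1 bounds $\E\langle X_r,|\cdot|^2\rangle$ for each fixed $r$. It says nothing about $\E\langle X_r,|\cdot|\rangle^2$ or about a supremum over $r$, and integrating in time cannot produce pointwise-in-time continuity.
\item The truncation route needs $\sup_{s\le T}M_s(|\cdot|^2\mathbf{1}_{\{|\cdot|>K\}})(\tt)\to0$. Item~1 gives no such uniform tail bound, and it cannot be pushed to $q>2$ under $\mathbf{m}_2<\infty$ alone (then $M_s|\cdot|^q(*)$ can be infinite).
\item Obtaining that tail bound via Dini's theorem presupposes the continuity of $s\mapsto M_s|\cdot|^2(\tt)$, which is what you are trying to prove.
\end{itemize}

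The missing ingredient is a second-moment bound on a monotone size process. The paper couples $X$ with the pure-growth process $Y$, so that $\langle X_r,|\cdot|^2\rangle\le\langle X_r,|\cdot|\rangle^2\le|Y_r|^2\le|Y_T|^2$ for $r\le T$. It then gets $\E|Y_T|^2<\infty$ from $\tilde{\L}|\cdot|^2\le C|\cdot|^2$, which is where $\mathbf{m}_2<\infty$ is used. Your total born mass $B_t$ works just as well. It is nondecreasing, $B_0=|\tt|$, and it jumps by $|\tt'|$ at rate $\beta p_{\tt'}\langle X_t,|\cdot|\rangle\le\beta p_{\tt'}B_t$. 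A localized Gronwall then gives $\E B_T^2\le e^{\beta(2\mathbf{m}_1+\mathbf{m}_2)T}|\tt|^2$, and $B_T^2$ is the integrable dominating variable you need.

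Separately, one step in item~1 is false. For $1<q<2$ the map $x\mapsto x^{q-1}$ is concave, so $(a+b)^{q-1}\ge 2^{q-2}(a^{q-1}+b^{q-1})$, not $\le$: with $q=3/2$, $a=1$, $b=4$ one gets $\sqrt5>3/\sqrt2$. Subadditivity would only give the constant $2q$ instead of $2^{q-1}q$. To get the stated exponent, use $a+b\le 2ab$ for $a,b\ge1$, so that $qb(a+b)^{q-1}\le 2^{q-1}q\,a^{q-1}b^{q}$, as the paper does. The rest is fine: Gronwall on the stopped process before Fatou, non-explosion from the $q=1$ bound, item~2, and the semigroup property.
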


\begin{proof}
    We first prove $(1).$ We first study $\L(|\cdot|^q)$. Notice that for every $a,b>0,q\ge1$, we have $a^q+b^q\le(a+b)^q$. Thus, the fragmentation and isolation term in $\L(|\cdot|^q)$ is non-positive. Then we have 
    \begin{align*}\label{eq.linearoperator}
        \L (|\cdot|^q)(\tt)
        \le\beta\sum_{v\in\tt}\sum_{\tt'}p_{\tt'}\Ll((|\tt|+|\tt'|)^q-|\tt|^q\Rr).   
    \end{align*}
    We apply convexity for the function $|\cdot|^q$ and use $|\tt|+|\tt'|\le2|\tt||\tt'|$:
    $$(|\tt|+|\tt'|)^q-|\tt|^q\leq q|\tt'|(|\tt|+|\tt'|)^{q-1}\le 2^{q-1}q|\tt'|^q|\tt|^{q-1}.$$
    Thus, we have
    $$\L(|\cdot|^q)(\tt)\leq \beta\sum_{v\in\tt}\sum_{\tt'}p_{\tt'}q2^{q-1}|\tt|^{q-1}|\tt'|^{q}
        =2^{q-1}\beta q\mathbf{m}_q|\tt|^q. $$
    For any fixed $\tt$ and $X_0=\delta_{\tt}$, we consider the stopping time
    $$\tau_m:=\inf\Ll\{s\in \mathbb{R}_{+}: \langle X_s,|\cdot|\rangle\geq m\Rr\}$$
    and the associated stopped process $(X_s^m)_{s\geq0}$ defined by $X_s^m=X_{s\wedge \tau_m}$. Applying Dynkin's formula to $|\cdot|^q$ and the stopped process yields
    $$\mathbb{E}_{\delta_{\tt}}[\langle X_s^m, |\cdot|^q\rangle]=\mathbb{E}_{\delta_{\tt}}[\langle X_0^m,|\cdot|^q\rangle]+\mathbb{E}_{\delta_{\tt}}\Ll[\int_0^{s\wedge \tau_m}\langle X_r,\L |\cdot|^q\rangle \,dr\Rr]. $$
    Furthermore, from the result above we have 
    $$\mathbb{E}_{\delta_{\tt}}[\langle X_s^m, |\cdot|^q\rangle]\leq \mathbb{E}_{\delta_{\tt}}[\langle X_0^m, |\cdot|^q\rangle]+2^{q-1}\beta q\mathbf{m}_{q}\mathbb{E}_{\delta_{\tt}}\Ll[\int_0^{s\wedge \tau_m}\langle X_r, |\cdot|^q\rangle\,dr\Rr]. $$
   Letting $m\rightarrow\infty$, applying Fatou's lemma on the left and the monotone convergence theorem on the right, we obtain
    $$M_s(|\cdot|^q)(\tt)\leq M_0(|\cdot|^q)(\tt)+2^{q-1}\beta q\mathbf{m}_{q}\int_0^s M_r(|\cdot|^q)(\tt)\,dr. $$
    Gr\"{o}nwall's lemma then yields $(1)$.

    Now we prove $(2)$. For any $f\in \mathcal{B}(|\cdot|^2)$, we set $f_+$ and $f_-$ to be the positive part and the negative part of f respectively. Since $M_s$ is positive, using the estimate in $(1)$ we have
    \begin{align*}
    &M_s f_{+}\le M_s(\norm{f_+}_{\B(|\cdot|^2)}|\cdot^2|)\le C_s\norm{f_+}_{\B(|\cdot|^2)}|\cdot|^2,\\
    &M_s f_{-}\le M_s(\norm{f_-}_{\B(|\cdot|^2)}|\cdot^2|)\le C_s\norm{f_-}_{\B(|\cdot|^2)}|\cdot|^2.
    \end{align*}
    Thus, we have 
    $$\norm{M_sf_+}_{\B(|\cdot|^2)}\le C_s\norm{f_+}_{\B(|\cdot|^2)},\qquad\norm{M_sf_-}_{\B(|\cdot|^2)}\le C_s\norm{f_-}_{\B(|\cdot|^2)}.$$
    Then we define
    \begin{align*}
        M_sf:=M_sf_+-M_sf_-\in\B(|\cdot|^2).
    \end{align*}
    We have
    \begin{align*}   \norm{M_sf}_{\B(|\cdot|^2)}\le\norm{M_sf_+}_{\B(|\cdot|^2)}+\norm{M_sf_-}_{\B(|\cdot|^2)}\le C_s(\norm{f_+}_{\B(|\cdot|^2)}+\norm{f_-}_{\B(|\cdot|^2)})\le2C_s\norm{f}_{\B(|\cdot|^2)}
    \end{align*}
    Thus $M_s$ is a bounded operator.
    
    Now we prove $(3)$. We first prove that $(M_s)_{s\ge0}$ is weakly continuous on $\B(|\cdot|^2)$. For $f\in\B(|\cdot|^2)$, without loss of generality, we assume
    that $|f(\tt)|\le|\tt|^2,\forall\tt\in\TT$. It suffices to prove that fix $\tt\in\TT$, the function
    $$s \mapsto M_sf(\tt)=\E_{\delta_{\tt}}\Ll[\langle X_s,f\rangle\Rr]$$
    is continuous. Assume $\lim_{n\rightarrow\infty}s_n=s_0,$ we prove that 
    \begin{equation}
    \label{eq:M_tcontinuous}
    \lim_{n\rightarrow\infty}\E_{\delta_{\tt}}\Ll[\langle X_{s_n},f\rangle\Rr]=\E_{\delta_{\tt}}\Ll[\langle X_{s_0},f\rangle\Rr].
    \end{equation}
    We consider the Markov process $(Y_s)_{s\ge 0}$ with only growth and no fragmentation or isolation. It is a Markov process on $\TT$ with generator
    $$\tilde{\L}\phi(\tt)=\beta\sum_{v\in\tt}\sum_{\tt'\in\TT}p_{\tt'}(\phi(\tt\oplus_v\tt')-\phi(\tt)).$$
    Using $\tilde{\L}|\cdot|^2\le C|\cdot|^2$ and a localization argument just like what we did in the proof of $(1)$, we can prove that $\E_{\delta_{\tt}} [|Y_s|^2]<+\infty,\forall s\ge 0$. Since $(Y_s)_{s\ge 0}$ is the process with only growth but no fragmentation or isolation, we can couple $(X_s)_{s\ge0}$ and $(Y_s)_{s\ge 0}$ on the same probability space such that almost surely for any $s\ge0, \langle X_s,|\cdot|\rangle\le |Y_s|$.  Then $\langle X_{s_n},|\cdot|^2\rangle\le|Y_{s_n}|^2\le|Y_T|^2$ for some $T>\sup_{n}s_n$. Then 
    $\E_{\delta_{\tt}} [|Y_T|^2]<+\infty$ and the dominated convergence theorem give us \eqref{eq:M_tcontinuous}.
    
    Now, we prove that for any $f\in\B(|\cdot|)$, we have
    $$\frac{d}{ds}M_sf(\tt)=M_s(\L f)(\tt).$$
    Since $f\in\B(|\cdot|)$, we can take $C>0$ such that $|f(\tt)|\leq C |\tt|$ and $|\L f(\tt)|\leq C|\tt|^2$ for all $\tt$. Thus, the function
    $$s \mapsto \E_{\delta_{\tt}}[\langle X_s,\L f\rangle]$$
    is continuous. Then it suffices to prove
    \begin{equation}\label{eq.targetequation}
        \mathbb{E}_{\delta_{\tt}}[\langle X_s,f\rangle]=\mathbb{E}_{\delta_{\tt}}[\langle X_0,f\rangle]+\int_0^s\mathbb{E}_{\delta_{\tt}}[\langle X_r, \L f\rangle]dr. 
    \end{equation}
   Dynkin's formula gives
    \begin{equation}\label{eq.Dynkin}
    \mathbb{E}_{\delta_{\tt}}[\langle X_s^m, f\rangle]=\mathbb{E}_{\delta_{\tt}}[\langle X_0^m,f\rangle]+\mathbb{E}_{\delta_{\tt}}\Ll[\int_0^{s\wedge \tau_m}\langle X_r,\L f\rangle\,dr\Rr]. 
    \end{equation}
    We will show that as $m\rightarrow\infty$ both sides of \eqref{eq.Dynkin} converge to the corresponding sides of \eqref{eq.targetequation}. 

    The convergence of the left-hand side term is a direct result of the dominated convergence theorem, since $\langle X_{s}^m,f\rangle\le C|Y_s|\in L^1$ and $\lim_{m\rightarrow\infty}\langle X_s^m,f\rangle=\langle X_s,f\rangle$ a.s. Now we turn to the right-hand side. Almost surely, we have 
    $$\int_0^{s \wedge \tau_m} \langle X_r, \L f \rangle \, dr\xrightarrow{m\rightarrow\infty}\int_0^s \langle X_r, \L f \rangle \, dr.$$ 
    Note that
    $$\big|\int_0^{s \wedge \tau_m} \langle X_r, \L f \rangle\, dr \big| \leq \int_0^s \langle X_r, |\L f| \rangle\, dr \leq C \int_0^s \langle X_r, |\cdot|^2 \rangle \, dr. $$

    Due to $(1)$, the last term has a finite mean. Thus, we can apply the dominated convergence theorem and Fubini's theorem to obtain 
    $$\mathbb{E}_{\delta_{\tt}} \left[ \int_0^{s \wedge \tau_m} \langle X_r, \L f \rangle \, dr \right] \xrightarrow{m \to \infty} \mathbb{E}_{\delta_{\tt}} \left[ \int_0^s \langle X_r, \L f \rangle \, dr \right]=\int_0^s \mathbb{E}_{\delta_{\tt}}[\langle X_r, \L f \rangle] \, dr$$

    Thus letting $m \to \infty$ in \eqref{eq.Dynkin} gives \eqref{eq.targetequation} and completes the proof. 
    \end{proof}

\subsection{Perron–Frobenius Theorem }

    In this subsection, we will prove Theorem~\ref{thm.main}. 

    We intend to apply the results from \cite{bansaye2019non} to derive the conclusions. However, since only the second moment is assumed,  we cannot derive that $\L$ generates the semigroup $(M_t)_{t\geq 0}$ in the space $\B(|\cdot|^2)$. Fortunately, this issue can be circumvented by directly verifying \cite[Assumption~(A1), (A2)]{bansaye2019non} together with  \cite[Proposition~2.3]{bansaye2019non}, which is summarized as the following lemma.
    
    %instead of applying Proposition~2.2 like previous work.

    \begin{lemma}\label{lem.lyapunovcondition}
        Let $V=|\, \cdot \,|^2$ and $h$ be the right eigenvector of $\L$, then there exist constants ${\tau>0}, \ {0<a<e^{\lambda \tau}},\ {b\geq 0}$ and $\ K\subset \TT$ such that: 
        \begin{enumerate}
            \item $M_{\tau}V\leq aV+b\Ind{K}h$, where $K$ is a non-empty finite subset of $\TT$;
            \item $M_{\tau}h=e^{\lambda\tau}h$;
            \item For any $\tt_1, \tt_2\in K$, 
            $$\delta_{\tt_1}M_{\tau}(\Ind{\tt_2})>0. $$
        \end{enumerate}
    \end{lemma}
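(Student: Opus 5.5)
We prove the three items separately; (2) and (3) are quick, and the substance lies in (1).

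\emph{Item (2).} Here $h$ is the right eigenvector from Proposition~\ref{eigenelements}, so $\L h=\lambda h$. Moreover $h\in\B(|\cdot|)$, since $h\le C|\cdot|^{\alpha}$ with $\alpha\le 1$. Therefore \eqref{eq:LgeneratesM_t} in Lemma~\ref{lem.well-definedsemigroup} gives
\[
\frac{d}{ds}M_sh(\tt)=M_s(\L h)(\tt)=\lambda M_sh(\tt),
\]
and hence $M_sh=e^{\lambda s}h$ for all $s$, in particular $s=\tau$.

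\emph{Item (3).} Take $K=\{\tt\in\TT:|\tt|\le R\}$ for some $R$ fixed later. For $\tt_1,\tt_2\in K$, the process started from $\tt_1$ can reach $\tt_2$ with positive probability before time $\tau$, by a finite chain of jumps. First cut edges adjacent to the root until a single vertex $*$ is split off; this uses fragmentation, and the other pieces are ignored. Then rebuild $\tt_2$ from $*$ by the growth/fragmentation chain $\tt_i\to\tt_{i+\frac12}\to\tt_{i+1}$ from the proof of Lemma~\ref{lem:A_nLarge}. Every jump in this chain has positive rate in $\L$, so $\delta_{\tt_1}M_\tau\Ind{\tt_2}>0$ for any $\tau>0$.

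\emph{Item (1).} We want a generator drift inequality for $V=|\cdot|^2$ of the form
\[
\L V\le -\kappa V+D
\]
with $\kappa$ as large as desired. Compute $\L V(\tt)$ term by term.
\begin{itemize}
\item Growth contributes at most $\beta|\tt|\sum_{\tt'}p_{\tt'}\bigl(2|\tt||\tt'|+|\tt'|^2\bigr)=2\beta\mathbf m_1|\tt|^2+\beta\mathbf m_2|\tt|$. Assumption~\ref{assum} makes $\mathbf m_2<\infty$, so this term is finite.
\item Isolation contributes $-\theta|\tt|^3$.
\item Fragmentation contributes $-2\gamma\sum_{v\ne\rt}|\tt_v|\,(|\tt|-|\tt_v|)\le 0$.
\end{itemize}
Hence
\[
\L V\le -\theta|\cdot|^3+2\beta\mathbf m_1|\cdot|^2+\beta\mathbf m_2|\cdot| .
\]
The cubic term dominates, so for every $\kappa>0$ there is $D_\kappa$ with $\L V\le-\kappa V+D_\kappa$ on $\TT$.

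Passing from the generator to the semigroup is the main obstacle. Lemma~\ref{lem.well-definedsemigroup} only provides \eqref{eq:LgeneratesM_t} for $f\in\B(|\cdot|)$, and $V$ does not belong to that space. We will redo the localization argument from part (1) of that lemma. Apply Dynkin's formula to the stopped process $X_{s\wedge\tau_m}$ with the function $e^{\kappa s}\langle X,V\rangle$, whose drift is bounded by $e^{\kappa s}D_\kappa\langle X,1\rangle$. Let $m\to\infty$, using Fatou's lemma on the left-hand side and monotone convergence for the negative part. Since $\langle X_r,1\rangle\le\langle X_r,|\cdot|\rangle$ and $M_r|\cdot|\le e^{\beta\mathbf m_1 r}|\cdot|$, this yields
\[
M_\tau V\le e^{-\kappa\tau}V+D'_{\kappa,\tau}\,|\cdot|,
\]
where $D'_{\kappa,\tau}=D_\kappa\int_0^\tau e^{-\kappa(\tau-r)}e^{\beta\mathbf m_1 r}\,dr$.

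We now absorb the linear term. Fix $\tau=1$ and choose $\kappa$ so large that $e^{-\kappa}<\tfrac12 e^{\lambda}$. Then pick $R$ large enough that $D'|\tt|\le\tfrac12 e^{\lambda}|\tt|^2$ whenever $|\tt|>R$. Set $a:=e^{-\kappa}+\tfrac12 e^{\lambda}<e^{\lambda}$. Outside $K$ we then have $M_\tau V\le aV$. On the finite set $K$ we use $D'|\tt|\le b\,h(\tt)$ with
\[
b:=D'R\,\Bigl(\min_{K}h\Bigr)^{-1},
\]
which is finite because $h>0$ everywhere. Combining the two regions gives $M_\tau V\le aV+b\Ind{K}h$. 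Finally, the same $K$ and $\tau$ are the ones used in (3).
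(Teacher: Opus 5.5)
Your proof is correct. Items (2) and (3) match the paper; the paper just invokes irreducibility for (3), which you spell out. The real difference is in item (1), in how you pass from the drift of $V=|\cdot|^2$ to the semigroup.

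The paper writes the drift as $\L V\le(\lambda-c)V+Ch$. It removes the localization by dominated convergence, dominating by $|Y_1|^2$ and $\int_0^1\langle X_r,|\cdot|^2\rangle\,dr$, and inserts $M_rh=e^{\lambda r}h$. To reach $M_1V\le e^{\lambda-c}V+C'h$ it then needs two further ingredients: the continuity of $s\mapsto M_sV(\tt)$ from Lemma~\ref{lem.well-definedsemigroup}(3), and the Gr\"onwall variant Lemma~\ref{lem.Gronwallvariation} proved in the appendix.

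You instead use a constant remainder, $\L V\le-\kappa V+D_\kappa$, and apply Dynkin's formula to the space--time functional $e^{\kappa s}\langle X,V\rangle$. This builds the Gr\"onwall step into the martingale identity. Because the remaining drift $e^{\kappa r}D_\kappa\langle X_r,1\rangle$ is non-negative, Fatou's lemma and Tonelli suffice. You need no dominated convergence, no continuity in $s$, and no appendix lemma, only the same non-explosion of $\tau_m$ that the paper also uses implicitly. Your phrase ``monotone convergence for the negative part'' is unnecessary: you only need an upper bound there.

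The cost is that your remainder $D'|\cdot|$ is not yet a multiple of $h$. You must absorb it in two pieces: into $V$ when $|\tt|>R$, and into $b\Ind{K}h$ on the finite set $K$ using $\min_K h>0$. The paper's remainder $C'h$ already has the required form; localizing it only uses the upper bound $h\le C|\cdot|^{\alpha}$ with $\alpha<1$.

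Your version also decouples $\kappa$ from $\lambda$. This makes the choice $0<a<e^{\lambda}$ transparent whatever the sign of $\lambda$.
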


    To prove the above lemma, we shall make use of the following variant of Gr\"{o}nwall's lemma.

    \begin{lemma}\label{lem.Gronwallvariation}
        Let $g: [0, \infty)\to\mathbb{R}$ be a continuous function, $C \in \R$,  and $c \in \R_+$. Assume the following estimate for all $0\leq s\leq t$
        \begin{equation}\label{eq.Gronwallvarationcondition}
        g(t) - g(s) \leq \int_s^t(-cg(r)+C)\,dr.
        \end{equation}
        Then for any $t\geq 0$ 
        \begin{equation}\label{eq.Gronwallvariation}
        g(t) \leq \frac{C}{c} + \left[ g(0) - \frac{C}{c} \right]_+ e^{-ct},
        \end{equation}
        where $[ \cdot ]_+ := \max(0, \cdot)$.
    \end{lemma}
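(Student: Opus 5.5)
The plan is to compare $g$ with the solution of the linear ODE $y' = -cy + C$, using only the integral inequality \eqref{eq.Gronwallvarationcondition} and the continuity of $g$, since $g$ is not assumed differentiable. First I would dispose of the degenerate case $c=0$. There the hypothesis reads $g(t)\le g(0)+Ct$, and the right-hand side of \eqref{eq.Gronwallvariation} must be interpreted suitably. I would therefore treat $c>0$ as the main case; if $c=0$ is really needed, one uses the direct bound $g(t)\le g(0)+Ct$.

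For $c>0$, set $\phi(t) := g(t) - C/c$. The hypothesis becomes $\phi(t)-\phi(s)\le -c\int_s^t \phi(r)\,dr$ for all $0\le s\le t$. It then suffices to prove $\phi(t)\le [\phi(0)]_+e^{-ct}$, and I would split into two steps.

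\textbf{Step 1: once $\phi$ is nonpositive it stays nonpositive.} Suppose $\phi(t_1)>0$ for some $t_1$, and let $s:=\sup\{r\le t_1:\phi(r)\le 0\}$. If this set is nonempty, continuity gives $\phi(s)=0$ and $\phi>0$ on $(s,t_1]$. Applying the hypothesis on $[s,t_1]$ yields $\phi(t_1)\le -c\int_s^{t_1}\phi<0$, a contradiction. Hence either $\phi(r)>0$ for all $r\in[0,t_1]$ (which forces $\phi(0)>0$), or $\phi\le 0$ at $t_1$. This already gives the claim whenever $\phi(0)\le 0$.

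\textbf{Step 2: exponential decay on a positive interval.} On a maximal interval $[0,T)$ where $\phi>0$ with $\phi(0)>0$, I would show $\phi(t)\le \phi(0)e^{-ct}$. Set $\Phi(t):=\int_0^t\phi$. Taking $s=0$ in the hypothesis gives $\Phi'(t)=\phi(t)\le \phi(0)-c\Phi(t)$. Since $\Phi$ is $C^1$, the classical Grönwall argument applied to $(e^{ct}\Phi)'\le \phi(0)e^{ct}$ gives $\Phi(t)\le \frac{\phi(0)}{c}(1-e^{-ct})$.

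That bound alone does not control $\phi$ pointwise, so I would instead run the argument with an arbitrary starting point $s$. For fixed $s$, let $u(t):=\int_s^t\phi$. The same computation yields $\phi(t)\le \phi(s)-cu(t)$ and $u(t)\le \frac{\phi(s)}{c}(1-e^{-c(t-s)})$. Combining these, together with positivity of $\phi$ on the interval (so that $u\ge 0$), gives at least the monotonicity $\phi(t)\le\phi(s)$.

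To get the sharp rate, the cleanest route is to work with $\psi(t):=e^{ct}\phi(t)$. The hypothesis says $\phi$ has "upper Dini derivative" at most $-c\phi$ in integrated form. To exploit this I would use a comparison-by-contradiction argument. Fix $\varepsilon>0$ and suppose $\phi(t)>(\phi(0)+\varepsilon)e^{-ct}$ somewhere. Take the first such time $t^*$, where equality holds by continuity, and set $w(t):=(\phi(0)+\varepsilon)e^{-ct}$. Choose $s<t^*$ close to $t^*$. On $[s,t^*]$ we have $\phi\ge w-o(1)$ near $t^*$. Then
\[
\phi(t^*)-\phi(s)\le -c\int_s^{t^*}\phi ,
\]
while $w(t^*)-w(s)=-c\int_s^{t^*}w$ and $\phi(s)\le w(s)$. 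Since $\phi$ and $w$ are both continuous with $\phi(t^*)=w(t^*)$, one has $\int_s^{t^*}\phi=\int_s^{t^*}w+o(t^*-s)$, so this alone gives only a nonstrict inequality. The obstacle is extracting strictness, and I would handle it with a slightly faster comparison function $w_\varepsilon(t)=(\phi(0)+\varepsilon)e^{-(c-\varepsilon)t}$ instead.

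I expect this last step, the pointwise comparison under only integral information, to be the main obstacle. If the perturbed comparison becomes messy, the fallback is to mollify: for small $h>0$, the function $g_h(t):=\frac1h\int_t^{t+h}g$ is $C^1$. Averaging the hypothesis shows $g_h$ satisfies $g_h'\le -cg_h+C$ pointwise. The classical bound then applies to $g_h$, and letting $h\to0$ recovers \eqref{eq.Gronwallvariation} for $g$ by continuity.
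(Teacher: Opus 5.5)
Your proposal is correct. The complete argument in it is the one you list as a fallback, and that route differs from the paper's.

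\textbf{Your mollification.} It needs nothing more. Take $g_\delta(t)=\frac{1}{\delta}\int_t^{t+\delta}g$ (your $g_h$); it is $C^1$, and the hypothesis on $[t,t+\delta]$ gives exactly $g_\delta'(t)=\frac{g(t+\delta)-g(t)}{\delta}\le -c\,g_\delta(t)+C$. Hence $e^{ct}\bigl(g_\delta(t)-\frac{C}{c}\bigr)$ is nonincreasing, and letting $\delta\to0$ yields $g(t)\le\frac{C}{c}+\bigl(g(0)-\frac{C}{c}\bigr)e^{-ct}$. This is sharper than the stated bound (no positive part) and needs no case split.

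\textbf{Your $\varepsilon$-barrier.} It also closes as you set it up. Note that $w_\varepsilon(t)=(\phi(0)+\varepsilon)e^{-(c-\varepsilon)t}$ decays more slowly than $e^{-ct}$, not faster; that is what makes it a strict supersolution. Let $t^*=\inf\{t:\phi(t)>w_\varepsilon(t)\}>0$. The hypothesis on $[s,t^*]$ and continuity give $\phi(s)-w_\varepsilon(s)\ge\varepsilon\,w_\varepsilon(t^*)(t^*-s)+o(t^*-s)>0$ for $s<t^*$ close to $t^*$. This contradicts $\phi\le w_\varepsilon$ on $[0,t^*]$, and you then let $\varepsilon\to0$.

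\textbf{The paper's route.} The paper shifts by $C/c$ and treats $\phi(0)\le0$ exactly as your Step 1. For $\phi(0)>0$ it avoids any perturbation. It takes a time $b$ at which $\phi$ touches $\phi(0)e^{-ct}$, with $\phi>\phi(0)e^{-ct}$ on some $(b,a]$. It observes that $e^{ct}\bigl(\phi(b)-c\int_b^t\phi\bigr)$ is nondecreasing: its derivative is $ce^{ct}\bigl(\phi(b)-c\int_b^t\phi-\phi(t)\bigr)\ge0$ by the hypothesis. It concludes $\int_b^t\phi\le\int_b^t\phi(0)e^{-cr}\,dr$ for $t\in[b,a]$, contradicting the strict exceedance.

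This inequality is exactly your Step 2 estimate $u(t)\le\frac{\phi(s)}{c}\bigl(1-e^{-c(t-s)}\bigr)$ taken at $s=b$. So you already had the paper's key inequality. The strictness you were after comes from integrating over an interval of strict exceedance, not from a perturbed barrier.

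\textbf{Comparison.} The paper's route stays entirely at the level of the integral inequality. Mollification is the shortest and gives the stronger bound. The other Step 2 computations (monotonicity of $\phi$, $\psi=e^{ct}\phi$) are not needed in any version, so I would lead with the mollification.
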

    The proof of the lemma is postponed to the appendix. Now we prove Lemma~\ref{lem.lyapunovcondition}.
    \begin{proof}[Proof of Lemma~\ref{lem.lyapunovcondition}]
        (2) is a corollary of Lemma~\ref{lem.well-definedsemigroup}$(3)$ and $h\in\B(|\cdot|)$ from Proposition~\ref{eigenelements}. Thanks to the irreducibility of the process, (3) holds automatically as long as the K in (1) exists. Now we focus on the proof of (1). 
        
        We choose $\tau=1$. Similar to the proof of Lemma~\ref{lem.well-definedsemigroup}, we consider the stopping times $\{\tau_m\}_m$ and the stopped process $\{X_s^m\}_{s\ge 0}$. We apply Dynkin's formula to the stopped process to obtain that for any $0\leq s\leq t\le 1$, 
        $$\mathbb{E}_{\delta_{\tt}}[\langle X_t^m, V\rangle]=\mathbb{E}_{\delta_{\tt}}[\langle X_s^m,V\rangle]+\mathbb{E}_{\delta_{\tt}}\Ll[\int_{s\wedge\tau_m}^{t\wedge \tau_m}\langle X_r,\L V\rangle dr\Rr]. $$
        From
        \begin{align*}
            \L V(\tt)&=\beta\sum_{v\in\tt}\sum_{\tt'\in\TT}p_{\tt'}((|\tt|+|\tt'|)^2-|\tt|^2)-\theta|\tt|^3+\gamma\sum_{v\in\tt\setminus\{\rt\}}(|\tt_v|^2+|\tt\ominus\tt_v|^2-|\tt|^2)\\
            &\leq -\theta|\tt|^3+2\beta\mathbf{m}_1|\tt|^2+\beta\mathbf{m}_2|\tt|
        \end{align*}
        and $h>0$ we know that there exist constants $c>0,\ C>0$ such that $\lambda-c<0$ and for any $\tt\in \TT$ 
        $$\L V(\tt)\leq (\lambda-c)V(\tt)+Ch(\tt). $$
        Then 
        \begin{align*}
            \mathbb{E}_{\delta_{\tt}}[\langle X_t^m, V\rangle]-\mathbb{E}_{\delta_{\tt}}[\langle X_s^m,V\rangle]\leq \mathbb{E}_{\delta_{\tt}}\Ll[\int_{s\wedge\tau_m}^{t\wedge \tau_m}\langle X_r,(\lambda-c)V+Ch\rangle\,dr\Rr].
        \end{align*}
        Letting $m\rightarrow\infty$, the same dominated convergence theorem argument as in (3) of Lemma \ref{lem.well-definedsemigroup} shows that
        $$\mathbb{E}_{\delta_{\tt}}[\langle X_t, V\rangle]-\mathbb{E}_{\delta_{\tt}}[\langle X_s,V\rangle]\leq \mathbb{E}_{\delta_{\tt}}\Ll[\int_{s}^{t}\langle X_r,(\lambda-c)V+Ch\rangle\,dr\Rr].$$
        Thus it results in
        \begin{align*}
        M_tV(\tt)-M_sV(\tt)&\leq \int_s^t \Ll((\lambda-c)M_rV(\tt)+Ce^{\lambda r}h(\tt)\Rr)\,dr \\
        &\le\int_{s}^t\Ll(-|\lambda-c|M_rV(\tt)+Ce^{|\lambda|}h(\tt)\Rr)\,dr
        \end{align*}
        Since Lemma~\ref{lem.well-definedsemigroup}$(3)$ gives us the continuity of the function $s\rightarrow M_sV(\tt)$, applying Lemma~\ref{lem.Gronwallvariation} to it yields
        $$M_1V(\tt)\leq \frac{Ce^{|\lambda|}h(\tt)}{|\lambda-c|}+V(\tt)e^{\lambda-c}=e^{\lambda-\frac{c}{2}}V(\tt)+\Ll(C'h(\tt)-(e^{\lambda-\frac{c}{2}}-e^{\lambda-c})V(\tt)\Rr). $$
        We complete the proof by choosing $a=e^{\lambda-\frac{c}{2}}<e^{\lambda}$ and $b,K$ large enough such that
        $$C'h(\tt)-(e^{\lambda-\frac{c}{2}}-e^{\lambda-c})V(\tt)\le b\Ind{K}h(\tt).$$
    \end{proof}
    
    \begin{proof}[Proof of Theorem \ref{thm.main}]
        Lemma~\ref{lem.lyapunovcondition} and Lemma~\ref{lem.well-definedsemigroup} ensure that the semigroup $(M_t)_{t\ge0}$  is a local bounded positive semigroup on $\B(|\cdot|^2)$ and satisfies Assumptions (A1) and (A2) of \cite{bansaye2019non} for $V=|\cdot|^2$.  Moreover, Proposition 2.3 of the work implies that Assumptions (A3) and (A4) are satisfied. Thus applying Theorem 2.1 in \cite{bansaye2019non} gives the existence and uniqueness(up to scaling of $\pi'$ and $h'$) of the triplet $(\lambda',\pi',h')$ satisfying
        $$\pi',h'>0,\qquad M_sh'=e^{\lambda's}h',\qquad \pi'M_s=e^{\lambda's}\pi'.$$
        and 
        \begin{align*}
            \sup_{\substack{f:\TT \to \mathbb{R} \\
        \sup_{\tt\in \TT} \vert f(\tt) \vert / \vert \tt \vert^2  \leq 1}}\big\vert e^{-\lambda' s} M_s f(\tt) -  h'(\tt) \langle\pi',f\rangle \big\vert  &\leq C \vert \tt \vert^{2}  e^{-\omega s}.
        \end{align*}
        On the other hand, Lemma~\ref{lem.well-definedsemigroup}$(3)$ and $h\in\B(|\cdot|)$ derives $M_sh=e^{\lambda s}h$. For any $f\in\B(|\cdot|)$, we have 
        $$\frac{d}{ds}\langle\pi, M_sf\rangle=\langle\pi,\L M_s f\rangle=\langle\L^*\pi,M_sf\rangle=\lambda\langle\pi,M_sf\rangle.$$
        Thus, we have $\langle\pi,M_sf\rangle=e^{\lambda s}\langle\pi,f\rangle$ for all $f\in\B(|\cdot|)$. So $\pi M_s=e^{\lambda s}\pi$. Combining with the uniqueness of the triplet, we know that after rescaling of $h'$ and $\pi'$, we have
        $$\lambda'=\lambda,\qquad\pi'=\pi,\qquad h'=h.$$
        Thus we complete the proof.
    \end{proof}

\section{$L^1$ convergence of the process}

 In the previous sections, the Perron's root $\lambda\in\mathbb{R}$ and associated eigenelements are derived. The sign of $\lambda$ determines whether the first moment semigroup goes to $0$ or to infinity. In this section, we discuss the trajectorial results of the process.
 The goal is to prove Theorem~\ref{Thm:a.e.ConvergenceIntro}. We use \cite[Theorem~3.1]{bansaye2025stronglawlargenumbers} as the main tool, and the crucial condition to be verified is the following proposition:
 \begin{proposition}
 \label{Pro:ConditionOfa.e.ConvergenceTheorem}
    Assume $\lambda>0$. There exist two positive functions $V$ and $V^*$ on $\TT$ such that $h\in\mathcal{B}(V)\subset \mathcal{B}(V^*)$ and $\langle\pi,V^*\rangle<+\infty$. Moreover, there exist $C>0$ and $\omega>0$ such that for all $s\ge 0$, we have
    \begin{equation}
        \label{Eq:V*lyapunov}
        \sup_{|f|\leq V^*}|e^{-\lambda s}M_s f(\tt)-h(\tt)\langle\pi,f\rangle|\leq CV^*(\tt)e^{-\omega s},\forall\tt\in\TT.
    \end{equation}
    For any $s\in(0,1]$ we have
    \begin{equation}
    \label{Eq:ExpectationOfZ_tFinite}
        \sup_{\tt\in\TT}\frac{\mathbb{E}_{\delta_{\tt}}\Ll[\langle X_s,V\rangle\log^*\langle X_s,V\rangle\Rr]}{V^*(\tt)}<+\infty,
    \end{equation}
    where
    \[
    \log^*{x}=
    \begin{cases}
    \frac{x}{e},&0\leq x\leq e\\
    \log(x),&x>e
    \end{cases}
    \]
 \end{proposition}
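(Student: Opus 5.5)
Take $V^* = |\cdot|^2$ and $V = |\cdot|$. Proposition~\ref{eigenelements} gives $h(\tt)\le C|\tt|^{\alpha}$ with $\alpha\le 1$, so $h\in\B(|\cdot|)$. Since $|\tt|\le|\tt|^2$, we also have $\B(|\cdot|)\subset\B(|\cdot|^2)$. The moment bound $\langle\pi,|\cdot|^2\rangle<\infty$ is \eqref{eq.3moment}. The uniform exponential convergence \eqref{Eq:V*lyapunov} with $V^*=|\cdot|^2$ is exactly the spectral estimate \eqref{eq.spectral} of Theorem~\ref{thm.main}. All the structural requirements are therefore immediate, and the remaining work is the $L\log L$ bound \eqref{Eq:ExpectationOfZ_tFinite}.

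For \eqref{Eq:ExpectationOfZ_tFinite}, use the elementary inequality $\log^* x\le x$ for $x\ge0$, which gives
\[
\langle X_s,V\rangle\log^*\langle X_s,V\rangle\le \langle X_s,|\cdot|\rangle^2 .
\]
Next use the coupling from the proof of Lemma~\ref{lem.well-definedsemigroup}(3). There, the pure-growth process $(Y_s)$ started from $\tt$ dominates the total mass: $\langle X_s,|\cdot|\rangle\le |Y_s|$ almost surely. Fragmentation only splits clusters and isolation only removes them, so the total number of active vertices is dominated by the size of the pure-growth tree. It therefore suffices to show
\[
\sup_{s\le 1}\E_{\delta_\tt}\big[|Y_s|^2\big]\le C|\tt|^2
\]
uniformly in $\tt$.

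The process $|Y_s|$ is a continuous-time branching-type process on $\N$. Each vertex independently produces a batch of size $|\tt'|$ at rate $\beta p_{\tt'}$. Its generator on $g(y)=y^2$ is
\[
\beta y\sum_{\tt'}p_{\tt'}\big(2y|\tt'|+|\tt'|^2\big)=2\beta\mathbf{m}_1y^2+\beta\mathbf{m}_2 y\le(2\beta\mathbf{m}_1+\beta\mathbf{m}_2)y^2 ,
\]
and this is finite by Assumption~\ref{assum}. The localization plus Gr\"onwall argument from Lemma~\ref{lem.well-definedsemigroup}(1) (stopping at $\tau_m$, Fatou, monotone convergence) then yields
\[
\E_{\delta_\tt}|Y_s|^2\le e^{(2\beta\mathbf{m}_1+\beta\mathbf{m}_2)s}|\tt|^2 .
\]
Dividing by $V^*(\tt)=|\tt|^2$ and taking $s\le1$ gives \eqref{Eq:ExpectationOfZ_tFinite}.

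The step needing most care is justifying the coupling $\langle X_s,|\cdot|\rangle\le|Y_s|$ rigorously. Construct both processes from the same Poisson clocks on vertices: growth clocks are shared, and fragmentation and isolation clocks are used only by $X$. This is routine. The only genuine concern is that the choice $V=|\cdot|$ must be compatible with how \cite[Theorem~3.1]{bansaye2025stronglawlargenumbers} uses $V$ (for instance, that $f\in\B(h)$ lies in $\B(V)$). This holds because $h\ge c/|\tt|$ is not needed there: only $h\le C|\cdot|$ matters. The crude bound $x\log^*x\le x^2$ wastes a logarithm, but it is affordable precisely because Assumption~\ref{assum} provides second moments.
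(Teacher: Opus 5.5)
Your proof is correct, but it reaches \eqref{Eq:ExpectationOfZ_tFinite} by a different and much shorter route than the paper.

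The paper takes $V=h$, $V^*=|\cdot|^2$ and applies Dynkin's formula to the nonlinear functional $\Phi(\mathbf F)=\langle\mathbf F,h\rangle\log^*\langle\mathbf F,h\rangle$ of the forest-valued process. It splits each jump increment into an entropy part and a linear part. The entropy part is bounded jump by jump, using both $h\ge c/|\tt|$ and $h\le C|\tt|^{\alpha}$. The linear part is absorbed via $\L h=\lambda h$, and Gr\"onwall then gives $\E_{\delta_{\tt}}[\Phi(X_s)]\le C|\tt|^2e^{\lambda s}$.

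You observe instead that, because the normalizing weight $V^*$ is already quadratic, the logarithm is irrelevant. The bound $x\log^*x\le x^2$, together with the pure-growth domination $\langle X_s,|\cdot|\rangle\le|Y_s|$, reduces everything to the second moment of a batch Yule process. Assumption~\ref{assum} controls that second moment directly. This bypasses the jump-by-jump analysis, the lower bound on $h$, and the integrability check needed to justify Dynkin's formula for $\Phi$.

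The choice $V=|\cdot|$ is not essential to your argument. Since $\langle X_s,h\rangle\le C\langle X_s,|\cdot|\rangle$, the same two lines give the bound for the paper's $V=h$. Conversely, $\B(h)\subset\B(|\cdot|)$, so your choice loses nothing downstream. Moreover, if the cited result gives convergence for $f\in\B(V)$, as the deduction of Theorem~\ref{Thm:a.e.ConvergenceIntro} suggests, then $f\equiv1$ would be covered without appealing to $\inf h>0$.

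The paper's computation tracks the genuine $L\log L$ functional along the dynamics, which is the natural template if one wanted a weight smaller than $|\cdot|^2$. As written, however, it also ends at $C|\tt|^2$ and uses $\mathbf m_2<\infty$ through Lemma~\ref{lem.well-definedsemigroup}(1). For the proposition as stated, your route is therefore strictly more economical.
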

\begin{proof}
    Set $V(\tt)=h(\tt)$ and $V^*(\tt)=|\tt|^2$. 
    By \eqref{eq:EstimationOfH}, $h(\tt)\leq C|\tt|^\alpha$ for some $0<\alpha<1$, so $h\in\mathcal{B}(h)\subset\mathcal{B}(V^*)$. 
    Moreover, \eqref{eq:MomentOfPi} gives $\langle\pi,V^*\rangle<+\infty$, and Theorem \ref{thm.main} gives \eqref{Eq:V*lyapunov}. 
    It remains only to verify \eqref{Eq:ExpectationOfZ_tFinite}. 
    
    Recall that \(\mathcal{M}_F(\TT)\) is the set of all finite point measures on \(\mathcal{T}\), which is precisely the space of all finite forests. We denote a forest in $\mathcal{M}_F(\TT)$ by $\mathbf{F}$. Then the GFI process is a Markov jump process on $\mathcal{M}_F(\TT)$ with generator    
    \begin{align*}    \mathscr{L}\Phi(\mathbf{F})&=\sum_{\mathbf{F}'\in\mathcal{M}_F(\TT)}q(F,F')\left(\Phi(\mathbf{F'})-\Phi(\mathbf{F})\right)=\mathscr{L}_G\Phi(\mathbf{F})+\mathscr{L}_F\Phi(\mathbf{F})+\mathscr{L}_I\Phi(\mathbf{F}),\\ \end{align*}
    where
    \[
    \begin{cases}
    \mathscr{L}_G\Phi(\mathbf{F})=\beta\sum_{\tt\in\mathbf{F}}\sum_{v\in\tt}\sum_{\tt'\in\TT}p_{\tt'}\left(\Phi(\mathbf{F}-\tt+\tt\oplus_{v}\tt')-\Phi(\mathbf{F})\right)\\
    \mathscr{L}_F\Phi(\mathbf{F})=\gamma\sum_{\tt\in\mathbf{F}}\sum_{v\in\tt\setminus{\{\rt\}}}(\Phi(\mathbf{F}-\tt+\tt_v+\tt\ominus\tt_v)-\Phi(\mathbf{F}))\\
    
    \mathscr{L}_I\Phi(\mathbf{F})=\theta\sum_{\tt\in\mathbf{F}}|\tt|(\Phi(\mathbf{F}-\tt)-\Phi(\mathbf{F}))
    \end{cases}
    \]

    To prove \eqref{Eq:ExpectationOfZ_tFinite}, we take $$\Phi(\mathbf{F})=\langle\mathbf{F},h\rangle\log^*\langle\mathbf{F},h\rangle$$
    and try to apply  Dynkin's formula to $\Phi$. Let 
    $$\tau_m=\inf_{s\geq0}\Ll\{\langle X_s,|\cdot|\rangle\geq m\Rr\},\qquad X_s^m=X_{s\wedge\tau_m}.$$
    We now prove that
    \begin{equation}                                            \label{Eq:DynkinForTruncated}\mathbb{E}_{\delta_{\tt}}\Ll[\Phi(X_s^m)\Rr]=\Phi(\delta_{\tt})+\mathbb{E}_{\delta_{\tt}}\Ll[\int_{0}^{s\wedge\tau_m}\mathscr{L}\Phi(X_r)dr\Rr].
    \end{equation}
    By Dynkin's formula, we only need to verify that 
    \begin{equation}
    \label{Eq:ConditionOfDynkin}
    \mathbb{E}_{\delta_{\tt}}\Ll[\int_{0}^{s\wedge\tau_m}\sum_{\mathbf{F'}\in\mathcal{M}_F(\TT)}q(X_r,\mathbf{F}')|\Phi(\mathbf{F}')-\Phi(X_r)|dr\Rr]<+\infty.
    \end{equation}
    We denote by $Z_s:=\langle X_s,h\rangle$ the mass of $X_s$, and when the event $X_s$ turns to $\mathbf{F'}$ occurs, we denote by $a_{\mathbf{F'}}$ the mass that is removed and by $R_\mathbf{F'}$ the mass that is added. To be more specific, when an event happens to the cluster $\tt$ of $X_s$, we define:
    \[
    a_{\mathbf{F'}}=h(\tt)\qquad R_{\mathbf{F'}}=
    \begin{cases}
    h(\tt\oplus_v\tt'),&\text{when $\tt'$ is attached to $v$}\\
    h(\tt_v)+h(\tt\ominus\tt_v),&\text{when the edge above $v$ is fragmented}\\
    0,&\text{when isolation happens}
    \end{cases}
    \]
    Note that we always have $a_\mathbf{F'}\le Z_s$. It can be verified that 
    \begin{align}
        \label{Eq:CalOfMinusPhi}
        \Phi(\mathbf{F'})-\Phi(X_s)&=\underbrace{(Z_s-a_{\mathbf{F'}}+R_{\mathbf{F'}})\Ll(\log^*(Z_s-a_{\mathbf{F'}}+R_{\mathbf{F'}})-\log^*(Z_s)\Rr)}_{A_{\mathbf{F'}}}\notag\\
        &+\underbrace{(R_{\mathbf{F'}}-a_{\mathbf{F'}})\log^*(Z_s)}_{B_{\mathbf{F'}}}.
    \end{align}
    To estimate the absolute value of $A_{\mathbf{F'}}$, we establish the following inequality:
    \begin{equation}
    \label{Eq:EstimateOfEnt}
        |(Z_s-a_{\mathbf{F'}}+R_{\mathbf{F'}})\left(\log^*(Z_s-a_{\mathbf{F'}}+R_{\mathbf{F'}})-\log^*(Z_s)\right)|\leq(e+a_{\mathbf{F'}}\vee R_{\mathbf{F'}})\log^*(e+CR_{\mathbf{F'}}|\tt|)
    \end{equation}
    for some constant $C>0$. We first prove the upper bound.
    
    When $R_{\mathbf{F'}}\leq a_{\mathbf{F'}}$, 
    $$LHS:=(Z_s-a_{\mathbf{F'}}+R_{\mathbf{F'}})\left(\log^*(Z_s-a_{\mathbf{F'}}+R_{\mathbf{F'}})-\log^*(Z_s)\right)\leq0$$
    When $R_{\mathbf{F'}}>a_{\mathbf{F'}}$ and $Z_s\geq e$, using that the function $$x\rightarrow (x+R_{\mathbf{F'}}-a_{\mathbf{F'}})\log\Ll(\frac{x+R_{\mathbf{F'}}-a_{\mathbf{F'}}}{x}\Rr)$$ is decreasing and $Z_s\geq a_{\mathbf{F'}},a_{\mathbf{F'}}=h(\tt)\geq \frac{c}{|\tt|}$ from \eqref{eq:EstimationOfH}, we have 
    $$LHS\leq R_{\mathbf{F'}}\log\Ll(\frac{R_{\mathbf{F'}}}{a_{\mathbf{F'}}}\Rr)
        \le (e+R_{\mathbf{F'}})\log^*(e+c R_{\mathbf{F'}}|\tt|)$$
    And finally when $R_{\mathbf{F'}}>a_{\mathbf{F'}}$ and $Z_s\leq e$, we have
    $$LHS\leq(e+R_{\mathbf{F'}})\log^*(e+R_{\mathbf{F'}})$$
    The above inequalities give the upper bound. The lower bound can be obtained in  the same way. Now we prove \eqref{Eq:ConditionOfDynkin}: since $r\le\tau_m$, we have $\langle X_r,|\tt|\rangle\leq m$. \eqref{eq:EstimationOfH} gives us $h(\tt)\le C|\tt|^{\alpha}$ for some $0<\alpha<1$, so 
    \begin{equation}
    \label{Eq:Z_rBounded}
    Z_r=\langle X_r,h\rangle\leq Cm
    \end{equation}
    Let $G(X_r),F(X_r),I(X_r)$ denote respectively the set of forests obtained from $X_r$ by growth, fragmentation, and isolation. We ignore the change of constants in the following proof. Then we have
    \begin{align*}
        \sum_{\mathbf{F'}\in\mathcal{M}_F(\TT)}q(X_r,\mathbf{F}')|\Phi(\mathbf{F}')-\Phi(X_r)|
        &=\sum_{\mathbf{F'}\in G(X_r)}q(X_r,\mathbf{F}')|\Phi(\mathbf{F}')-\Phi(X_r)|\\
        &\qquad +\sum_{\mathbf{F'}\in F(X_r)}q(X_r,\mathbf{F}')|\Phi(\mathbf{F}')-\Phi(X_r)|\\
        &\qquad +\sum_{\mathbf{F'}\in I(X_r)}q(X_r,\mathbf{F}')|\Phi(\mathbf{F}')-\Phi(X_r)|.
    \end{align*}
    For $\mathbf{F'}$ obtained by growth, using \eqref{Eq:CalOfMinusPhi} we have
    $$\sum_{\mathbf{F'}\in G(X_r)}q(X_r,\mathbf{F}')|\Phi(\mathbf{F}')-\Phi(X_r)|
    \le\sum_{\mathbf{F'}\in G(X_r)}q(X_r,\mathbf{F}')(|A_{\mathbf{F'}}|+|B_{\mathbf{F'}}|).$$
    Assume $\mathbf{F}'$ is obtained by attaching $\tt'$ on $v$ of $\tt$. Then 
    $$a_{\mathbf{F}'}\le C|\tt|^{\alpha},\qquad R_{\mathbf{F}'}\le C(|\tt|+|\tt'|)^{\alpha}.$$
    Using \eqref{Eq:EstimateOfEnt} and \eqref{Eq:Z_rBounded} we have
    \begin{align*}
    &|A_{\mathbf{F}'}|\le C(|\tt|+|\tt'|)^{\alpha}\log^{*}(C(|\tt|+|\tt'|)^{1+\alpha})\le C(|\tt|+|\tt'|),\\
    &|B_{\mathbf{F}'}|\le C(|\tt|+|\tt'|)^{\alpha}\log^*(m)\le C\log^*m(|\tt|+|\tt'|).
    \end{align*}
    Thus, we have
    \begin{align}
    \sum_{\mathbf{F}'\in G(X_r)}q(X_r,\mathbf{F}')|A_{\mathbf{F}'}|&\le\beta\sum_{\tt\in X_r}\sum_{v\in\tt}\sum_{\tt'\in\TT}p_{\tt'}C(|\tt|+|\tt'|)\notag\\
    &=C\beta\Ll(\langle X_r,|\cdot|^2\rangle+\langle X_r,|\cdot|\rangle\mathbf{m}_1\Rr)\notag\\
    &\le C\langle X_r,|\cdot|^2\rangle\label{Eq:EstimateOfA_F}
    \end{align}
    for some constant $C$ independent of $m$. The same computation gives
    $$\sum_{\mathbf{F}'\in G(X_r)}q(X_r,\mathbf{F}')|B_{\mathbf{F}'}|\le C\log^*m \langle X_r,|\cdot|^2\rangle.$$
    Thus, we have
    $$\sum_{\mathbf{F'}\in G(X_r)}q(X_r,\mathbf{F}')|\Phi(\mathbf{F}')-\Phi(X_r)|\le C\log^*m\langle X_r,|\cdot|^2\rangle$$
    For $\mathbf{F'}$ obtained by fragmentation or isolation on $\tt$, we have 
    $$a_{\mathbf{F}'}\vee R_{\mathbf{F}'}\le C|\tt|^{\alpha}.$$
    Again, we use \eqref{Eq:EstimateOfEnt} to obtain
    \begin{equation}
    \sum_{\mathbf{F}'\in F(X_r)\cup I(X_r)}q(X_r,\mathbf{F}')|A_{\mathbf{F}'}|\le(\gamma+\theta)\sum_{\tt\in X_r}|\tt|\cdot C|\tt|^{\alpha}\log^*|\tt|\le C\langle X_r,|\cdot|^2\rangle
    \label{Eq:EstimateOfA_F,IF}
    \end{equation}
   for some constant $C>0$ independent of $m$. The same computation gives
   $$\sum_{\mathbf{F}'\in F(X_r)\cup I(X_r)}q(X_r,\mathbf{F}')|B_{\mathbf{F}'}|\le C\log^*m \langle X_r,|\cdot|^2\rangle.$$
    Thus, we have
    $$\sum_{\mathbf{F}'\in F(X_r)\cup I(X_r)}q(X_r,\mathbf{F}')|\Phi(\mathbf{F}')-\Phi(X_r)|\le C_m\langle X_r,|\cdot|^2\rangle.$$
    Combining the results above gives 
    $$\sum_{\mathbf{F'}\in\mathcal{M}_F(\TT)}q(X_r,\mathbf{F}')|\Phi(\mathbf{F}')-\Phi(X_r)|\leq C_m\langle X_r,|\tt|^2\rangle.$$
    Thus, we have
    \begin{align*}   &\quad\mathbb{E}_{\delta_{\tt}}\Ll[\int_{0}^{s\wedge\tau_m}\sum_{\mathbf{F'}\in\mathcal{M}_F(\TT)}q(X_r,\mathbf{F}')|\Phi(\mathbf{F}')-\Phi(X_r)|dr\Rr]\\
    &\leq\mathbb{E}_{\delta_{\tt}}\Ll[\int_{0}^{s\wedge\tau_m}C_m\langle X_r,|\cdot|^2\rangle dr\Rr]\leq\int_{0}^{s}C_m\mathbb{E}_{\delta_{\tt}}\Ll[\langle X_r,|\cdot|^2\rangle\Rr] dr\\
    &=C_m\int_{0}^{s}M_r|\cdot|^2(\tt) dr\leq C_mC_s|\tt|^2<+\infty
    \end{align*}
    The inequality in the last line holds since from Lemma~\ref{lem.well-definedsemigroup}$(1)$
    we know that $M_r|\cdot|^2\leq C_r|\cdot|^2$. And finally we have proved \eqref{Eq:ConditionOfDynkin} and get Dynkin's formula \eqref{Eq:DynkinForTruncated}.
    Now we use \eqref{Eq:CalOfMinusPhi}:
    \begin{align*}
    \mathbb{E}_{\delta_{\tt}}[\mathscr{L}\Phi(X_r)]&=\mathbb{E}_{\delta_{\tt}}\Ll[\sum_{\mathbf{F'}\in\mathcal{M}_F(\TT)}q(X_r,\mathbf{F'})A_{\mathbf{F}'}\Rr]
    +\mathbb{E}_{\delta_{\tt}}\Ll[\log^*(Z_r)\sum_{\mathbf{F'}\in\mathcal{M}_F(\TT)}q(X_r,\mathbf{F'})(R_{\mathbf{F'}}-a_{\mathbf{F'}})\Rr].
    \end{align*}
    Using \eqref{Eq:EstimateOfA_F} and \eqref{Eq:EstimateOfA_F,IF}, we have:
   $$\sum_{\mathbf{F'}\in\mathcal{M}_F(\TT)}q(X_r,\mathbf{F'})A_{\mathbf{F}'}\le C\langle X_r,|\cdot|^2\rangle.$$
    Since the function $\mathbf{F'}\rightarrow\langle \mathbf{F'},h\rangle$ is a linear statistic, using the definition of the first moment semigroup and $\mathcal{L}h=\lambda h$:
    $$\sum_{\mathbf{F'}\in\mathcal{M}_F(\TT)}q(X_r,\mathbf{F'})(\langle \mathbf{F'},h\rangle-\langle X_r,h\rangle)=\sum_{\tt\in X_r}\mathcal{L}h(\tt)=\lambda \langle X_r,h\rangle=\lambda Z_r$$
    So we have
    \begin{equation}\label{Eq:EstimateOfLPhi}\mathbb{E}_{\delta_{\tt}}[\mathscr{L}\Phi(X_r)]\leq \lambda \mathbb{E}_{\delta_{\tt}}[Z_r\log^*(Z_r)]+C\mathbb{E}_{\delta_{\tt}}[\langle X_r,|\cdot|^2\rangle]\leq \lambda \mathbb{E}_{\delta_{\tt}}[\Phi(X_r)]+C|\tt|^2\end{equation}
     The last inequality is obtained by using Lemma~\ref{lem.well-definedsemigroup}$(1)$ and $r\le 1$:
    $$\mathbb{E}_{\delta_{\tt}}[\langle X_r,|\cdot|^2\rangle]=M_rf_2(\tt)\le C_r|\tt|^2\leq C|\tt|^2.$$
    Inserting \eqref{Eq:EstimateOfLPhi} into Dynkin's formula \eqref{Eq:DynkinForTruncated}:
    $$\mathbb{E}_{\delta_{\tt}}[\Phi(X_s^m)]=\Phi(\delta_{\tt})+\int_{0}^{s\wedge\tau_m}\mathbb{E}_{\delta_{\tt}}[\mathscr{L}\Phi(X_r)]dr\leq\Phi(\delta_{\tt})+\int_{0}^{s}\left(\lambda\mathbb{E}_{\delta_{\tt}}[\Phi(X_r)]+C|\tt|^2\right)dr.$$
    Let $m\rightarrow\infty$ and $X_s^m\rightarrow X_s$ almost surely. We apply Fatou's lemma on the left and use $s\le 1$:
    \begin{align*}\mathbb{E}_{\delta_{\tt}}[\Phi(X_s)]\leq\liminf_{m\rightarrow\infty}\mathbb{E}_{\delta_{\tt}}[\Phi(X_s^m)]&\leq\Phi(\delta_{\tt})+\int_{0}^{s}\left(\lambda\mathbb{E}_{\delta_{\tt}}[\Phi(X_r)]+C|\tt|^2\right)dr\\&\le\Phi(\delta_{\tt})+C|\tt|^2+\lambda\int_{0}^{s}\mathbb{E}_{\delta_{\tt}}[\Phi(X_r)]dr
    \end{align*}
    Omitting the change of constants, we use Gr\"{o}nwall's inequality:
    $$\mathbb{E}_{\delta_{\tt}}[\Phi(X_s)]\leq(\Phi(\delta_{\tt})+C|\tt|^2)e^{\lambda s}\leq C|\tt|^2e^{\lambda s}.$$
    So we have
    $$\sup_{\tt\in\TT}\frac{\mathbb{E}_{\delta_{\tt}}[\Phi(X_s)]}{|\tt|^2}\leq Ce^{\lambda s}$$
    which completes the proof.
   \end{proof}
   Now we are ready to prove Theorem~\ref{Thm:a.e.ConvergenceIntro}:
   \begin{proof}[Proof of Theorem~\ref{Thm:a.e.ConvergenceIntro}]
       It is classical that $(\M_s)_{s\ge 0}$ defined as 
      \begin{align*}
          \M_s:=e^{-\lambda s}\langle X_s,h\rangle
      \end{align*}
      is a non-negative martingale, hence converges almost surely to some random variable $W$. Then Proposition~\ref{Pro:ConditionOfa.e.ConvergenceTheorem} together with \cite[Theorem~3.1]{bansaye2025stronglawlargenumbers} implies \eqref{eq:a.e.ConvergenceOff}. Since $\lambda=\lab-\theta$, $\lambda>0$ \  implies $\lab>0$. Then Proposition~\ref{eigenelements} yields $\gamma>\theta$, which in turn implies the uniform lower bound $\inf_{\tt\in\TT}h(\tt)>0$. That is $\sup_{\tt\in\TT}1/h(\tt)<+\infty$.
   \end{proof}

\appendix
\section{THE VARIATION OF GR\"{O}NWALL'S FORMULA}
We present here the proof of Lemma~\ref{lem.Gronwallvariation}. 

\begin{proof}
We prove the lemma by considering the different cases of $C$ and $g(0)$. 

First deal with the case $C = 0, g(0) > 0 $.Define $ h(t) $ for $t\geq 0$ by $h(t):= g(0)e^{-ct}$. Notice that $ h $ is positive with $ h(0) = g(0) $, and for every $0\leq u\leq t$ we have
$$h(t) - h(u) = -\int_u^t c h(s) \, ds . $$
Consider the set $ S = \{ t \geq 0 : g(t) > h(t) \} $. If $ S = \emptyset $ then the lemma holds true. Assume by contradiction that $ S \neq \emptyset $. In this case, take $a>0$ such that $ g(a) > h(a) $. Consider now
$$ b := \inf\{0\leq a' < a : g(t) > h(t),\quad \forall t \in (a', a)\} . $$
By continuity of $ g $ and $ h $, we have $ g(b) = h(b) $. Thus $ b < a $ and for every
$t \in (b, a]$ we have $g(t) > h(t)$. Now let $ \phi(t) = g(b) -c \int_b^t g(s) \, ds $. \eqref{eq.Gronwallvarationcondition} yields for every $t\geq b$
$$g(t) \leq \phi(t). $$
In order to compare $\phi(t)$ and $h(t)$ for $t \in [b, a]$, taking the derivative of the ratio $\frac{\phi(t)}{ h(t)}$ we have
$$ \left( \frac{\phi}{h} \right)' = \frac{\phi' h - h' \phi}{h^2} = \frac{-c gh + c h \phi}{h^2} = \frac{c h (\phi - g)}{h^2} \geq 0. $$
This implies that $\frac{\phi(t)}{h(t)}$ is increasing for $t \in [b, a]$. Since $\frac{\phi(b)}{h(b)} = 1$, we can conclude that for every $t\in [b,a]$
$$\phi(t) \geq h(t).  $$
Thus
$$ \int_b^t g(s) \, ds \leq \int_b^t h(s) \, ds,\quad\forall t \in [b, a] \quad. $$
This contradicts the fact that for every $t\in (b,a]$ we have $g(t)>h(t)$.

Next, we turn to the case $ C = 0, g(0) \leq 0 $. Consider the set $ S = \{ t \geq 0:g(t) > 0 \} $. If $ S = \emptyset $ then the lemma holds true. Assume by contradiction that $ S \neq \emptyset $. Again, take an element $ a>0$ such that $ g(a) > 0 $. Consider now
$$ b = \inf\{0\leq a' < a : g(t) > 0 ,\quad\forall t \in (a', a) \ \} . $$
By continuity of $ g $ and the fact that $ g(0) \leq 0 $, we have $ g(b) = 0$ and 
$$g(t) > 0, \quad \forall t\in(b,a]. $$
Then choose an arbitrary $ t_0 \in (b, a] $. $g(s)>0$ for any $s\in(b,a]$ implies that
$$ g(t_0) \leq -c\int_b^{t_0} g(s) \, ds < 0. $$
This contradicts the assumption.

Finally, we turn to the case $ C \neq 0 $. 
Define $ \hat{g}:= g - C /c $. For $0\leq u\leq t$ we have
$$\hat{g}(t) - \hat{g}(u) = g(t) - g(u) \leq \int_u^t (- c g(s) + C)\,ds = -\int_u^t c\hat{g}(s) \, ds. $$
Thus $ \hat{g} $ satisfies the conditions of Case 1 or Case 2. Then as a consequence for every $t\geq 0$ we have
$$\hat{g}(t) \leq [\hat{g}(0)]_+ e^{-c t}. $$
The conclusion of the lemma follows by replacing $ \hat{g} $ by $ g - C / c $ in the above equation. In summary, \eqref{eq.Gronwallvariation} is proved to be true. 
\end{proof}

\textbf{Acknowledgement.} 
This research is supported by the National Natural Science Foundation
of China (Nos. 12595280, 12595284).

\bibliographystyle{abbrv}
\bibliography{RRTRef}

@book {AN2004,
	AUTHOR = {Athreya, K. B. and Ney, P. E.},
	TITLE = {Branching processes},
	NOTE = {Reprint of the 1972 original [Springer, New York; MR0373040]},
	PUBLISHER = {Dover Publications, Inc., Mineola, NY},
	YEAR = {2004},
	PAGES = {xii+287},
	ISBN = {0-486-43474-5},
	MRCLASS = {60-02 (60J10 60J27 60J45 60J80)},
	MRNUMBER = {2047480},
}

@book {Harris63,
    AUTHOR = {Harris, Theodore E.},
     TITLE = {The theory of branching processes},
    SERIES = {Die Grundlehren der mathematischen Wissenschaften},
    VOLUME = {Band 119},
 PUBLISHER = {Springer-Verlag, Berlin; Prentice Hall, Inc., Englewood
              Cliffs, NJ},
      YEAR = {1963},
     PAGES = {xiv+230},
   MRCLASS = {60.67},
  MRNUMBER = {163361},
MRREVIEWER = {P.\ A. P. Moran},
}

@article{bellin2024uniformattachmentfreezingscaling,
	AUTHOR = {Bellin, \'Etienne and Blanc-Renaudie, Arthur and Kammerer,
	Emmanuel and Kortchemski, Igor},
	TITLE = {Uniform attachment with freezing: scaling limits},
	JOURNAL = {Ann. Inst. Henri Poincar\'e{} Probab. Stat.},
	FJOURNAL = {Annales de l'Institut Henri Poincar\'e{} Probabilit\'es et
	Statistiques},
	VOLUME = {61},
	YEAR = {2025},
	NUMBER = {4},
	PAGES = {2679--2708},
	ISSN = {0246-0203,1778-7017},
	MRCLASS = {60D05 (60F05)},
	MRNUMBER = {4981995},
	DOI = {10.1214/24-aihp1497},
	URL = {https://doi.org/10.1214/24-aihp1497},
}

@article{bellin2023uniform,
	AUTHOR = {Bellin, \'Etienne and Blanc-Renaudie, Arthur and Kammerer,
	Emmanuel and Kortchemski, Igor},
	TITLE = {Uniform attachment with freezing},
	JOURNAL = {Ann. Appl. Probab.},
	FJOURNAL = {The Annals of Applied Probability},
	VOLUME = {35},
	YEAR = {2025},
	NUMBER = {4},
	PAGES = {2882--2922},
	ISSN = {1050-5164,2168-8737},
	MRCLASS = {60C05 (05C82 60F17)},
	MRNUMBER = {4945094},
	DOI = {10.1214/25-AAP2190},
	URL = {https://doi.org/10.1214/25-AAP2190},
}

@article{bansaye2021growth,
	AUTHOR = {Bansaye, Vincent and Gu, Chenlin and Yuan, Linglong},
	TITLE = {A growth-fragmentation-isolation process on random recursive
	trees and contact tracing},
	JOURNAL = {Ann. Appl. Probab.},
	FJOURNAL = {The Annals of Applied Probability},
	VOLUME = {33},
	YEAR = {2023},
	NUMBER = {6B},
	PAGES = {5233--5278},
	ISSN = {1050-5164,2168-8737},
	MRCLASS = {60J80 (60J27 60J85)},
	MRNUMBER = {4677733},
	MRREVIEWER = {J\'anos\ Engl\"ander},
	DOI = {10.1214/23-aap1947},
	URL = {https://tlink.lib.tsinghua.edu.cn:443/https/443/org/doi/yitlink/10.1214/23-aap1947},
}

@article{bansaye2019non,
	AUTHOR = {Bansaye, Vincent and Cloez, Bertrand and Gabriel, Pierre and
	Marguet, Aline},
	TITLE = {A non-conservative {H}arris ergodic theorem},
	JOURNAL = {J. Lond. Math. Soc. (2)},
	FJOURNAL = {Journal of the London Mathematical Society. Second Series},
	VOLUME = {106},
	YEAR = {2022},
	NUMBER = {3},
	PAGES = {2459--2510},
	ISSN = {0024-6107,1469-7750},
	MRCLASS = {47A35 (35B40 47D06 60J80 92D25)},
	MRNUMBER = {4498558},
	DOI = {10.1112/jlms.12639},
	URL = {https://tlink.lib.tsinghua.edu.cn:443/https/443/org/doi/yitlink/10.1112/jlms.12639},
}

@incollection{baur2014cutting,
	AUTHOR = {Baur, Erich and Bertoin, Jean},
	TITLE = {Cutting edges at random in large recursive trees},
	BOOKTITLE = {Stochastic analysis and applications 2014},
	SERIES = {Springer Proc. Math. Stat.},
	VOLUME = {100},
	PAGES = {51--76},
	PUBLISHER = {Springer, Cham},
	YEAR = {2014},
	MRCLASS = {60C05 (05C50 05C80 60J80 60K35)},
	MRNUMBER = {3332709},
	MRREVIEWER = {Alexander Iksanov},
	DOI = {10.1007/978-3-319-11292-3\_3},
	URL = {https://doi.org/10.1007/978-3-319-11292-3_3},
}

@article{meir1974cutting,
	AUTHOR = {Meir, A. and Moon, J. W.},
	TITLE = {Cutting down random trees},
	JOURNAL = {J. Austral. Math. Soc.},
	FJOURNAL = {J. Austral. Math. Soc.},
	VOLUME = {11},
	YEAR = {1970},
	PAGES = {313--324},
	MRCLASS = {05.45},
	MRNUMBER = {284370},
	MRREVIEWER = {E.\ M.\ Palmer},
}

@article{athreya1968some,
	AUTHOR = {Athreya, Krishna Balasundaram},
	TITLE = {Some results on multitype continuous time {M}arkov branching
	processes},
	JOURNAL = {Ann. Math. Statist.},
	FJOURNAL = {Annals of Mathematical Statistics},
	VOLUME = {39},
	YEAR = {1968},
	PAGES = {347--357},
	ISSN = {0003-4851},
	MRCLASS = {60.67},
	MRNUMBER = {221600},
	MRREVIEWER = {F. L. Spitzer},
	DOI = {10.1214/aoms/1177698395},
	URL = {https://doi.org/10.1214/aoms/1177698395},
}

@article{Barlow,
	title={A branching process with contact tracing},
	author={Barlow,Martin T},
	journal={Preprint 2020},
	pages={available via https://www.math.ubc.ca/~barlow/preprints/112-bpct5.pdf}
}

@article{Lambert,
	title={A mathematical assessment of the efficiency of quarantining and contact tracing in curbing the COVID-19 epidemic},
	author={Lambert, Amaury},
	journal={Preprint on MedArxiv (2020)}
}

@article{bertoin2022,
	AUTHOR = {Bertoin, Jean},
	TITLE = {A model for an epidemic with contact tracing and cluster
	isolation, and a detection paradox},
	JOURNAL = {J. Appl. Probab.},
	FJOURNAL = {Journal of Applied Probability},
	VOLUME = {60},
	YEAR = {2023},
	NUMBER = {3},
	PAGES = {1079--1095},
	ISSN = {0021-9002,1475-6072},
	MRCLASS = {60J80 (92D25)},
	MRNUMBER = {4624056},
	MRREVIEWER = {Eliane\ R.\ Rodrigues},
	DOI = {10.1017/jpr.2022.112},
	URL = {https://tlink.lib.tsinghua.edu.cn:443/https/443/org/doi/yitlink/10.1017/jpr.2022.112},
}

@article{kesten1966limit,
	AUTHOR = {Kesten, H. and Stigum, B. P.},
	TITLE = {A limit theorem for multidimensional {G}alton-{W}atson
	processes},
	JOURNAL = {Ann. Math. Statist.},
	FJOURNAL = {Annals of Mathematical Statistics},
	VOLUME = {37},
	YEAR = {1966},
	PAGES = {1211--1223},
	ISSN = {0003-4851},
	MRCLASS = {60.67},
	MRNUMBER = {198552},
	MRREVIEWER = {I. J. Good},
	DOI = {10.1214/aoms/1177699266},
	URL = {https://doi.org/10.1214/aoms/1177699266},
}

@misc{bansaye2025stronglawlargenumbers,
      title={On the strong law of large numbers and Llog L condition for supercritical general branching processes}, 
      author={Vincent Bansaye and Tresnia Berah and Bertrand Cloez},
      year={2025},
      eprint={2503.03324},
      archivePrefix={arXiv},
      primaryClass={math.PR},
      url={https://arxiv.org/abs/2503.03324}, 
}

@misc{andre2025sharpllogl,
      title={Sharp $L \log L$ condition for supercritical Galton-Watson processes with countable types}, 
      author={Mathilde André and Jean-Jil Duchamps},
      year={2025},
      eprint={2503.05575},
      archivePrefix={arXiv},
      primaryClass={math.PR},
      url={https://arxiv.org/abs/2503.05575}, 
}

@article {VJ62,
    AUTHOR = {Vere-Jones, D.},
     TITLE = {Geometric ergodicity in denumerable {M}arkov chains},
   JOURNAL = {Quart. J. Math. Oxford Ser. (2)},
  FJOURNAL = {The Quarterly Journal of Mathematics. Oxford. Second Series},
    VOLUME = {13},
      YEAR = {1962},
     PAGES = {7--28},
      ISSN = {0033-5606,1464-3847},
   MRCLASS = {60.65},
  MRNUMBER = {141160},
MRREVIEWER = {J.\ S.\ Griffin, Jr.},
       DOI = {10.1093/qmath/13.1.7},
       URL = {https://doi.org/10.1093/qmath/13.1.7},
}

@book {Seneta06,
    AUTHOR = {Seneta, E.},
     TITLE = {Non-negative matrices and {M}arkov chains},
    SERIES = {Springer Series in Statistics},
      NOTE = {Revised reprint of the second (1981) edition [Springer-Verlag,
              New York; MR0719544]},
 PUBLISHER = {Springer, New York},
      YEAR = {2006},
     PAGES = {xvi+287},
      ISBN = {978-0387-29765-1; 0-387-29765-0},
   MRCLASS = {60-02 (15A48 60J10)},
  MRNUMBER = {2209438},
}

@book {jagers75,
    AUTHOR = {Jagers, Peter},
     TITLE = {Branching processes with biological applications},
    SERIES = {Wiley Series in Probability and Mathematical
              Statistics---Applied Probability and Statistics},
 PUBLISHER = {Wiley-Interscience [John Wiley \& Sons], London-New
              York-Sydney},
      YEAR = {1975},
     PAGES = {xiii+268},
      ISBN = {0-471-43652-6},
   MRCLASS = {60J80 (60J85 92A15)},
  MRNUMBER = {488341},
MRREVIEWER = {Dean\ H.\ Fearn},
}

@article{pichugin2020evolution,
  title={Evolution of multicellular life cycles under costly fragmentation},
  author={Pichugin, Yuriy and Traulsen, Arne},
  journal={PLoS computational biology},
  volume={16},
  number={11},
  pages={e1008406},
  year={2020},
  publisher={Public Library of Science San Francisco, CA USA}
}

@article{pichugin2017fragmentation,
  title={Fragmentation modes and the evolution of life cycles},
  author={Pichugin, Yuriy and Pe{\~n}a, Jorge and Rainey, Paul B and Traulsen, Arne},
  journal={PLoS computational biology},
  volume={13},
  number={11},
  pages={e1005860},
  year={2017},
  publisher={Public Library of Science San Francisco, CA USA}
}
\end{document}